\newtheorem{thm}{Theorem}
\newtheorem{lem}{Lemma}
\newtheorem{rem}[lem]{Remark}
\newtheorem{cor}{Corollary}
\newtheorem{prop}[lem]{Proposition}
\def \pK{\mathcal{K}}
\def \NN{\mathbb{N}}
\def \RR{\mathbb{R}}
\def \Rd{\RR^d}
\def \mJ{p}
\def \dJ{r}
\DeclareMathOperator {\supp}{supp}
\definecolor{tjb}{rgb}{1,0.0,1}
\newcommand{\ind}{{\bf 1}}
\definecolor{ksb}{rgb}{0.7,0.1,0.2}
\title{Sharp and plain estimates
for Schr{\"o}dinger perturbation \\ of Gaussian kernel}
\author{Tomasz Jakubowski and Karol Szczypkowski}
\date{\empty}
\begin{document}

\maketitle

\begin{abstract}
We investigate whether
a fundamental solution of the Schr{\"o}dinger equation $\partial_t u =(\Delta +V)\, u$
has local in time sharp Gaussian estimates.
We compare 
that class with the class of $V$
for which
local in time plain Gaussian estimates hold. 
We concentrate on 
$V$ that have fixed sign and
we present certain conclusions for $V$ in the Kato class.
\end{abstract}

\section{Introduction and main results}
\makeatletter{\renewcommand*{\@makefnmark}{}
\footnotetext{AMS subject classification: Primary 47D06, 47D08; Secondary 35A08, 35B25}
\footnotetext{Keywords and phrases: Schr\"odinger perturbation, Gaussian estimates}
\footnotetext{The research of the first author was supported by the NCN grant 2015/18/E/ST1/00239\makeatother}}

Let $d=1,2,\ldots$.
We consider the Gauss-Weierstrass kernel,
\[g(t,x,y)=
(4\pi t)^{-d/2} e^{-\frac{|y-x|^2}{4t}}, \qquad t>0,\ x,y\in\Rd.\]
It is well known that $g$ is the fundamental solution of the equation $\partial_t u=\Delta u$, and time-homogeneous probability transition 
density -- the heat kernel of $\Delta$.
Throughout the paper we let
$V: \Rd\to \RR$
to be a Borel measurable function.
We call 
$G:(0,\infty)\times \Rd\times \Rd\to [0,\infty]$
the heat kernel of $\Delta+V$
or the Schr\"odinger perturbation of $g$ by $V$, if the following
Duhamel or perturbation formula holds for $t>0$, $x,y\in \Rd$,
\[
G(t,x,y)=g(t,x,y)+\int_0^t \int_{\Rd} G(s,x,z)V(z)g(t-s,z,y)dzds.
\]
One of the directions in the study of $G(t,x,y)$
is to find its estimates or bounds.
It is  
natural to 
ask if there are positive numbers, i.e., {\it constants} $0<c_1\le c_2<\infty$ such that the following two-sided bound holds,
\begin{align}\label{est:sharp_uni}
c_1  \leq \frac{G(t,x,y)}{g(t,x,y)}\leq c_2,\qquad t>0,\ x,y\in \Rd.
\end{align}
We call \eqref{est:sharp_uni}
{\it sharp Gaussian estimates} (or bounds) {\it global} (or uniform) in time.
One can also ponder a weaker property -- if for a given $T\in (0,\infty)$,
\begin{align}\label{est:sharp_time}
c_1  \leq \frac{G(t,x,y)}{g(t,x,y)}\leq c_2 \,,\qquad 0<t\le T,\ x,y\in \Rd.
\end{align}
We call \eqref{est:sharp_time} {\it sharp Gaussian estimates local} in time.
We observe that the inequality in \eqref{est:sharp_uni} is stronger than
the 
{\it
plain 
Gaussian estimates global} in time
\begin{align}\label{est:gaus} 
c_1\, (4\pi t)^{-d/2} e^{-\frac{|y-x|^2}{4t\varepsilon_1}} \leq G(t,x,y)\leq c_2\, (4\pi t)^{-d/2} e^{-\frac{|y-x|^2}{4t\varepsilon_2}},\qquad t>0,\ x,y\in \Rd,
\end{align}
where
$0<\varepsilon_1 \le 1\le \varepsilon_2<\infty$.
Similarly, \eqref{est:sharp_time} is stronger than
the 
{\it
plain 
Gaussian estimates local} in time
\begin{align}\label{est:gaus_b} 
c_1\, (4\pi t)^{-d/2} e^{-\frac{|y-x|^2}{4t\varepsilon_1}} \leq G(t,x,y)\leq c_2\, (4\pi t)^{-d/2} e^{-\frac{|y-x|^2}{4t\varepsilon_2}},\qquad 0<t\leq T,\ x,y\in \Rd.
\end{align}
We refer the reader to
\cite{MR3914946}
and \cite{MR3200161}
for a brief survey on the literature
concerning \eqref{est:sharp_uni}, \eqref{est:sharp_time}, 
\eqref{est:gaus}
and \eqref{est:gaus_b}, in particular,
on the results of
\cite{MR1978999}, 
\cite{MR1994762} and \cite{MR4093916}.
In the present paper our main focus is on
the distinction between local  sharp Gaussian estimates \eqref{est:sharp_time}
and local plain Gaussian estimates
\eqref{est:gaus_b}.

In Theorem~\ref{thm:t1}
we combine our findings with those of \cite{MR3914946} to depict when for $V\leq 0$
local (or global) {\it sharp} Gaussian estimates 
hold
if and only if local (or global) {\it plain} Gaussian estimates hold. 
\begin{thm}\label{thm:t1}
Let $V\leq 0$. Then, \eqref{est:sharp_time} holds if and only if
 \eqref{est:gaus_b} holds
according to the {\rm 'local in time'} column of Table~\ref{t:1}.
Similarly,
 \eqref{est:sharp_uni} holds if and only if \eqref{est:gaus} holds according to the
{\rm 'global in time'} column.
\begin{table}[h!]\label{t:1}
\begin{center}
\begin{tabular}[t]{ m{3cm} |c|l|l| m{3cm}}
\cline{2-4}
&dimension & \makecell{local\\in time} & \makecell{global\\ in time} & \\ 
\cline{2-4}
\noalign{\vskip\doublerulesep
         \vskip-\arrayrulewidth}
\cline{2-4} 
& $d\geq 4$ & No & No   \\  
\cline{2-4}
& $d=3$ & No\textsuperscript{\tiny{1)}} & Yes\textsuperscript{\tiny{2)}}   \\  
\cline{2-4}
& $d=2$ & Yes\textsuperscript{\tiny{3)}} & Yes\textsuperscript{\tiny{5)}}  \\  
\cline{2-4}
& $d=1$ & Yes\textsuperscript{\tiny{4)}} & Yes\textsuperscript{\tiny{5)}}  \\  
\cline{2-4}
\end{tabular}
\caption{
Equivalence of {\it sharp} and {\it plain} Gaussian bounds for $V\leq 0$.}
\end{center}
\end{table}
\end{thm}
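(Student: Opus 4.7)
The plan is to treat each cell of Table~\ref{t:1} separately, combining a general reduction valid for any $V\leq 0$ with dimension-specific inputs drawn from \cite{MR3914946} and new counterexamples to be developed in subsequent sections.

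First I would record the central reduction: for $V\leq 0$ the Duhamel formula, iterated as a perturbation series, forces $G\leq g$ pointwise. Hence the upper inequalities in both \eqref{est:sharp_time} and \eqref{est:gaus_b} are automatic with $c_2=1=\varepsilon_2$, and the entire equivalence concerns the \emph{lower} bound. The direction \eqref{est:sharp_time}$\Rightarrow$\eqref{est:gaus_b} (and its global analogue) is trivial, so the task reduces to deciding, in each dimension, either how to upgrade a plain lower bound with some $\varepsilon_1\in(0,1)$ to the sharp lower bound $\varepsilon_1=1$, or how to produce a $V\leq 0$ witnessing the failure of such an upgrade.

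For the affirmative entries---$d\in\{1,2\}$ locally and globally, and $d=3$ globally---I would invoke the corresponding theorems from \cite{MR3914946}. Their strategy combines the Feynman--Kac representation with Khasminskii-type bounds and 3G/3P inequalities: once a plain lower bound is assumed, recurrence of Brownian motion in dimensions at most $2$ (respectively, the decay of $V$ at infinity forced by the global plain bound in $d=3$) makes $\int_0^t V(X_s)\,ds$ uniformly integrable along Brownian bridges, which delivers the sharp lower bound. I would simply cite each positive entry to its precise counterpart in \cite{MR3914946}, without reproving.

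For the negative entries---$d\geq 4$ locally and globally, and $d=3$ locally---I would exhibit explicit counterexamples $V\leq 0$ satisfying \eqref{est:gaus_b} (respectively \eqref{est:gaus}) while \eqref{est:sharp_time} (respectively \eqref{est:sharp_uni}) fails. A natural candidate is an inverse-square well $V(x)=-\lambda\,\eta(x)/|x|^2$ with a suitable cutoff $\eta$ and $\lambda$ placed near the Hardy threshold: in $d\geq 4$ the Feynman--Kac expectation $\mathbb{E}^{x,y}_t\!\bigl[\exp\bigl(\int_0^t V(X_s)\,ds\bigr)\bigr]$ along Brownian bridges decays along a sequence $(t_n,x_n,y_n)\to(0,0,0)$ at a rate that kills the ratio $G/g$, while a wider-Gaussian lower bound still persists. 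I expect the main obstacle to be precisely the $d=3$ local-versus-global split: there the counterexample must have its Kato-type mass concentrated on short time scales---small enough globally that \cite{MR3914946} still forces \eqref{est:sharp_uni}$\Leftrightarrow$\eqref{est:gaus}, yet singular enough at the origin to collapse the short-time sharp ratio. The technical heart will be the calibration of the cutoff and the asymptotic analysis of $G(t,0,0)/g(t,0,0)$ as $t\downarrow 0$.
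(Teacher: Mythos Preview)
Your reduction to lower bounds via $G\leq g$ is fine, but two of the substantive steps cannot be carried out as you sketch.

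\textbf{The affirmative $d\leq 2$ local entries are not in \cite{MR3914946}.} Those equivalences are new to this paper and rest on Theorem~\ref{thm:d2d1}, the comparability $\|S(V)\|_{T,\infty}\approx \sup_x\int_0^T\!\!\int g(s,x,z)|V(z)|\,dz\,ds$ in $d\leq 2$. The actual route from plain to sharp is: assuming \eqref{est:gaus_b}, integrate the Duhamel identity in $x$ to get $\sup_x\int_0^T\!\!\int g|V|<\infty$ (see \eqref{nes2}); then Theorem~\ref{thm:d2d1} converts this to $\|S(V)\|_{T,\infty}<\infty$; then part 1) of Lemma~\ref{lem:comb} yields \eqref{est:sharp_time}. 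Your Feynman--Kac/recurrence heuristic is not the mechanism used, and I do not see how to make it quantitative without essentially reproving Theorem~\ref{thm:d2d1}.

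\textbf{The $d=3$ local counterexample cannot be a cutoff inverse-square well.} In dimension $3$, any $V\in\mathcal{K}_3$ with compact support automatically satisfies $\|\Delta^{-1}V\|_\infty<\infty$, which by \cite[(7),(8)]{MR3914946} forces $\sup_{t>0}\|S(V,t)\|_\infty<\infty$ and hence \eqref{est:sharp_time}. So your proposed $-\lambda\eta(x)/|x|^2$ with a cutoff $\eta$ cannot separate plain from sharp locally in $d=3$. The paper's construction (Theorem~\ref{thm:d3}) is necessarily of unbounded support: a sum of functions supported on unions of long thin cylinders stretching to infinity, calibrated so that $V\in\mathcal{K}_3$ yet $\|K(V,1)\|_\infty=\infty$ via the new test of Theorem~\ref{thm:new_equiv}. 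For $d\geq 4$ your Hardy-type idea is closer in spirit, though the paper instead dilates and truncates the example of \cite[Proposition~1.6]{MR3914946} to force $\|S(V,t)\|_\infty=\infty$ for every $t>0$ while keeping $V\in\mathcal{K}_d$ and $\|\Delta^{-1}V\|_\infty<\infty$.
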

At this point we enclose some comments and references that complete Table~\ref{t:1} and which can also be tracked in other places in the paper.
\begin{rem}
Let $V\leq 0$. We list the superscripts of Table~\ref{t:1}.
\begin{enumerate}
\item[{\rm 1)}] we refer the reader to \cite[Theorem~1B]{MR1994762};
\item[{\rm 2)}] \eqref{est:sharp_uni} and \eqref{est:gaus} are equivalent to the potential boundedness of $V$ if $d=3$, see \cite{MR3914946};
\item[{\rm 3)}] \eqref{est:sharp_time} and \eqref{est:gaus_b} are equivalent to the enlarged Kato class condition on $V$ if $d=2$, see \eqref{enKato} and Corollary~\ref{cor:d2_1};
\item[{\rm 4)}] \eqref{est:sharp_time} and \eqref{est:gaus_b} are equivalent to Kato class condition on $V$ (uniform local integrability of $V$) if $d=1$, see \eqref{Kato} and Corollary~\ref{cor:d1_1};
\item[{\rm 5)}] \eqref{est:sharp_uni} as well as \eqref{est:gaus} are impossible for non-trivial $V$ if $d\leq 2$, see \cite[page 3]{MR3914946}.
\end{enumerate}
\end{rem}

In the literature
there exist several intrinsic quantities
that are used to
characterize $V \leq 0$
for which \eqref{est:sharp_time} holds,
and
to formulate 
necessary and (separately) sufficient conditions
for \eqref{est:sharp_time} if $V\geq 0$.
Let us start with one that
derives from
Zhang \cite[Lemma~3.1 and Lemma~3.2]{MR1978999} and from Bogdan, Jakubowski and Hansen  \cite[(1)]{MR2457489}.
For $t>0$ and $x,y\in\Rd$
we define
\begin{align}\label{def:S}
S(V,t,x,y)=\int_0^t \int_{\Rd} \frac{g(s,x,z)g(t-s,z,y)}{g(t,x,y)}|V(z)|\,dzds\,.
\end{align}
Further, we let
\begin{align*}
\|S(V,t)\|_{\infty}= \sup_{x,y\in\Rd} S(V,t,x,y)\,,\qquad
\|S(V)\|_{T,\infty}=\sup_{0<t\leq T} \|S(V,t)\|_{\infty}\,.
\end{align*}
Other quantities are surveyed in Section~\ref{sec:overview}.
The following lemma is an excerpt from 
\cite{MR3914946}
that 
exposes the relation between $\|S(V)\|_{T,\infty}$
and \eqref{est:sharp_time},
and 
will
suffice for our discussion and purposes.
We write as usually $f^+=\max\{0,f\}$, $f^-=\max\{0,-f\}$.

\begin{lem}\label{lem:comb}
We have
\begin{enumerate}
\item[1)] If $V\leq 0$, then
for each $T\in (0,\infty)$,
\eqref{est:sharp_time} is equivalent to
$\|S(V)\|_{T,\infty}<\infty$.
\item[2)] If $V \geq 0$, then \eqref{est:sharp_time} implies
$\|S(V)\|_{T,\infty}<\infty$
for each $T\in (0,\infty)$.  
\item[3)] If for some
$h>0$ and $0\le \eta<1$ we have
$\|S(V^+)\|_{h,\infty}\leq \eta$
and if $S(V^-,t,x,y)$ is bounded on bounded subsets of
$(0,\infty)\times\Rd\times\Rd$,
then
\begin{align}\label{gen_est}
e^{-S(V^-,t,x,y)} \leq \frac{G(t,x,y)}{g(t,x,y)}\leq \left(\frac{1}{1-\eta}\right)^{1+t/h}, \qquad t>0, \ x,y\in \Rd \,.
\end{align}
\end{enumerate}
\end{lem}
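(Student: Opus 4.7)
My plan is to prove the three parts through two complementary manipulations of the Duhamel identity together with a Jensen-type inequality applied to a Feynman--Kac representation of $G$. For the implications ``sharp estimates $\Rightarrow\|S(V)\|_{T,\infty}<\infty$'' in 1) and 2) I would just rearrange the perturbation formula and insert the pointwise bounds on $G$. When $V\le 0$ it reads
\[g(t,x,y)-G(t,x,y)=\int_0^t\!\int_{\Rd} G(s,x,z)\,|V(z)|\,g(t-s,z,y)\,dz\,ds,\]
and using $G\le g$ (automatic when $V\le 0$) together with $G\ge c_1 g$ yields $S(V,t,x,y)\le (1-c_1)/c_1$. When $V\ge 0$ it reads $G-g=\int_0^t\!\int_{\Rd}GVg$, and $c_1 g\le G\le c_2 g$ yields $S(V,t,x,y)\le (c_2-1)/c_1$. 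Taking suprema over $x,y\in\Rd$ and $t\in(0,T]$ finishes these implications.

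For the upper bound in 3) I would iterate the Duhamel identity using $V^+$ only. By monotonicity in $V$ one has $G\le G^+$, where $G^+=\sum_{n\ge 0}P_n$ with $P_0=g$ and $P_n(t,x,y)=\int_0^t\!\int_{\Rd}P_{n-1}(s,x,z)\,V^+(z)\,g(t-s,z,y)\,dz\,ds$. Interpreting $g(s,x,z)g(t-s,z,y)/g(t,x,y)$ as a Brownian bridge density in $z$ and splitting it off at each inductive step gives $P_n(t,x,y)\le g(t,x,y)\,\|S(V^+)\|_{h,\infty}^n\le g(t,x,y)\,\eta^n$ for $t\in(0,h]$, hence $G/g\le 1/(1-\eta)$ on $(0,h]$. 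I would then extend this to arbitrary $t>0$ using the semigroup identity $G(t+s,x,y)=\int_{\Rd}G(t,x,z)G(s,z,y)\,dz$ and $\int_{\Rd}g(t,x,z)g(s,z,y)\,dz=g(t+s,x,y)$: writing $t=kh+r$ with $0\le r<h$ and $k\le t/h$ and iterating yields the claimed bound $(1/(1-\eta))^{1+t/h}$.

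For the lower bound in 3), which also completes the remaining direction of 1), I would use the Feynman--Kac formula
\[G(t,x,y)=g(t,x,y)\,\mathbb{E}^{x,y}_t\exp\!\Bigl(\int_0^t V(B_s)\,ds\Bigr),\]
where $\mathbb{E}^{x,y}_t$ denotes expectation for the Brownian bridge from $x$ to $y$ of length $t$. Dropping the nonnegative factor $\exp(\int_0^t V^+(B_s)\,ds)$ and applying Jensen's inequality to the convex function $e^{-\cdot}$ gives $G/g\ge\exp(-\mathbb{E}^{x,y}_t\int_0^t V^-(B_s)\,ds)=\exp(-S(V^-,t,x,y))$, since the Brownian bridge density at time $s\in(0,t)$ is precisely $g(s,x,z)g(t-s,z,y)/g(t,x,y)$. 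Specialising to $V\le 0$ gives $G\ge g\exp(-\|S(V)\|_{T,\infty})>0$, finishing the equivalence in 1).

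The main obstacle I anticipate is rigorously justifying both the Neumann series and the Feynman--Kac representation under the weak hypotheses of the lemma (merely $\|S(V^+)\|_{h,\infty}\le\eta$ and local boundedness of $S(V^-)$, rather than convenient pointwise bounds on $V$). Following \cite{MR3914946}, I would first handle bounded truncations of $V^\pm$, for which both formulas are classical, and then pass to the limit using the monotone dependence of the Duhamel iteration on $V^\pm$ and monotone/dominated convergence.
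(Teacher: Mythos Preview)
The paper does not prove this lemma itself---it is stated as ``an excerpt from \cite{MR3914946}'', with \cite{MR3000465} cited for the upper bound in \eqref{gen_est}. Your reconstruction is correct and coincides with the standard arguments in those references: the Duhamel rearrangement for the forward implications in 1) and 2); the perturbation-series bound $P_n\le\eta^n g$ on $(0,h]$ combined with Chapman--Kolmogorov for the upper bound in 3) (this is precisely the method of \cite{MR3000465}); and Feynman--Kac plus Jensen's inequality for the Brownian bridge for the lower bound (cf.~\cite{MR2457489}), with the truncation-and-monotone-limit device you mention being exactly how these papers make the formal steps rigorous.
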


The relation between the bound of \eqref{def:S} and the upper bound in \eqref{gen_est}, in the framework of integral kernels, can be found in \cite{MR3000465}.  For some other variants see \cite{MR4058740}.
Recall that the celebrated sufficient condition 
for the local plain Gaussian estimates \eqref{est:gaus_b}  is that $V$ belongs to {\it the Kato class} 
(\cite{MR644024}, \cite{MR670130},
\cite{MR333833}, \cite{MR0203473}),
which we abbreviate to 
$V\in \pK_d$. More precisely, $V\in \pK_d$ if 
\begin{align}\label{Kato}
\lim_{t\to 0^+} \sup_{x\in\Rd} \int_0^t\int_{\Rd}  g(s,x,z) |V(z)|\,dzds=0\,.
\end{align}
We say that
$V$ belongs to 
{\it the enlarged Kato class},
which we denote by $V\in \widehat{\mathcal{K}}_d$,
 if 
\begin{align}\label{enKato}
\sup_{x\in\Rd} \int_0^t\int_{\Rd}  g(s,x,z) |V(z)|\,dzds<\infty\,,
\end{align}
holds for some (every) $t>0$
(see \cite[Proposition~5.1]{MR845197}).
The class $\widehat{\mathcal{K}}_d$ is also known as {\it the Dynkin class} in a measure theory context.
We refer the reader to
\cite{MR1132313},
\cite{MR2345907}
and
\cite{MR3713578}
for a wider perspective on the Kato class;
and to 
\cite{MR1488344},  
\cite{MR1687500},
\cite{MR1783642},
\cite{MR1994762},
\cite{MR2253015},
\cite{MR2253111},
\cite{MR3914946}
for a corresponding class and results for time-dependent $V$.
We will also use the following notation
$$
\Delta^{-1} V (x)= - \int_0^{\infty}\int_{\Rd} g(s,x,z) V(z)\,dzds\,,
\qquad
\qquad
\|\Delta^{-1}V\|_{\infty}=\sup_{x\in \Rd} |\Delta^{-1} V (x)|\,.
$$

We give main results concerning the difference between sharp and plain Gaussian estimates.
We distinguish four cases: $d\geq 4$, $d=3$, $d=2$, $d=1$.
\begin{thm}\label{thm:dgeq4}
Let $d\geq 4$. There exists $V \leq 0$ with the following properties
\begin{enumerate}
\item[{\rm(a)}] ${\rm supp} (V)\subseteq B(0,1)$,
\item[{\rm(b)}] $V\in\pK_d$,
\item[{\rm(c)}] $\|\Delta^{-1} V\|_{\infty}<\infty$,  
\item[{\rm(d)}] $\|S(V,t)\|_{\infty}=\infty$ for every $t>0$.
\end{enumerate}
\end{thm}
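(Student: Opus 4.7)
The plan is to construct $V$ explicitly as a countable sum of indicator functions of thin spherical shells clustering at the origin along a line. Fix a unit vector $e\in\Rd$ and choose exponents $\alpha=1$, $\beta=\tfrac12+\tfrac{1}{4(d-1)}$, $\gamma=\tfrac34$. For $N_0$ large enough, set
\[
A_n:=\bigl\{z\in\Rd: \bigl|\,|z-n^{-\alpha}e|-n^{-\beta}\bigr|<\tfrac12 n^{-\gamma}\bigr\},\qquad V:=-\sum_{n\ge N_0}\ind_{A_n},
\]
and write $R_n:=n^{-\beta}$, $\delta_n:=n^{-\gamma}$. Condition (a) holds because $\max_{z\in A_n}|z|\le n^{-\alpha}+n^{-\beta}+\tfrac12 n^{-\gamma}<1$ for $N_0$ large. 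For (c), the Newtonian potential of a thin shell satisfies $\|\Delta^{-1}\ind_{A_n}\|_\infty\le C R_n\delta_n=Cn^{-(\beta+\gamma)}$, and since $\beta+\gamma=\tfrac54+\tfrac1{4(d-1)}>1$, subadditivity yields $\|\Delta^{-1}V\|_\infty<\infty$. Condition (b) follows from (c) via the standard bound $\int_0^t g(s,x,z)\,ds\le C_d|x-z|^{2-d}$ ($d\ge3$) together with the dichotomy: split the sum over $n$ at $n\sim r^{-2/(\beta+\gamma)}$; terms above use the shell Kato-norm bound, while terms below are bounded by $\int_{B(x,r)}|x-z|^{2-d}dz\lesssim r^2$, giving a tail of order $r^{\epsilon}$ with $\epsilon=2(\beta+\gamma-1)/(\beta+\gamma)>0$.

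For (d), fix $t>0$ and take $x=0$, $y=e$. Using
\[
\frac{g(s,x,z)g(t-s,z,y)}{g(t,x,y)}=(2\pi\Sigma_s^2)^{-d/2}\exp\!\Bigl(-\frac{|z-z_s|^2}{2\Sigma_s^2}\Bigr),\qquad z_s=\tfrac{s}{t}e,\ \ \Sigma_s^2=\tfrac{2s(t-s)}{t},
\]
I isolate, for each $n$, the contribution near $s_n:=t/n$, where the bridge center equals $z_n:=n^{-1}e$ and $\Sigma_{s_n}^2\asymp 2t/n$. The inequality $\alpha<2\beta$ forces $R_n\ll\Sigma_{s_n}$, so on the shell $A_n$ the Gaussian factor is effectively constant: shifting $w=z-z_n$ and using polar coordinates, the spherical average $\int_{S^{d-1}}\exp(R_n u\cdot\eta_s/\Sigma_s^2)\,du=\omega_{d-1}(1+o(1))$ (with $\eta_s:=z_s-z_n$), radial integration contributes the shell volume $R_n^{d-1}\delta_n$, and a straightforward computation then gives
\[
\int_{A_n}\frac{g(s,0,z)g(t-s,z,e)}{g(t,0,e)}\,dz\;\sim\;\omega_{d-1}R_n^{d-1}\delta_n\,(2\pi\Sigma_s^2)^{-d/2}\exp\!\Bigl(-\frac{|z_s-z_n|^2}{2\Sigma_s^2}\Bigr).
\]
The $s$-integral of the residual Gaussian has width $\asymp t\Sigma_{s_n}$, so
\[
S(\ind_{A_n},t,0,e)\;\sim\; t\,\Sigma_{s_n}^{1-d}R_n^{d-1}\delta_n\;\sim\; t^{(3-d)/2}\, n^{(d-1)(\alpha/2-\beta)-\gamma}=t^{(3-d)/2}\,n^{-1},
\]
using $(d-1)(\beta-\alpha/2)+\gamma=\tfrac14+\tfrac34=1$. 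Summing the harmonic series yields $\|S(V,t)\|_\infty\ge\sum_n S(\ind_{A_n},t,0,e)=\infty$.

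The main obstacle is a clean justification of the asymptotic $S(\ind_{A_n},t,0,e)\sim t^{(3-d)/2}/n$. One must (i) verify the spherical-average approximation with quantitative error $O(R_n/\Sigma_{s_n})\to 0$, (ii) control the variation of $e^{-r^2/(2\Sigma_s^2)}$ across the thin annulus (with error $O(R_n\delta_n/\Sigma_s^2)\to 0$ in the exponent), and (iii) integrate in $s$ over the narrow window $|s-s_n|\lesssim t\Sigma_{s_n}$, accounting for the variation of $\Sigma_s$ itself. Because $|V|\ge 0$, cross-terms $S(\ind_{A_m},t,0,e)$ with $m\ne n$ only reinforce divergence and demand no separate analysis. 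The hypothesis $d\ge 4$ is used precisely to make the triple $\beta<\gamma$ (thin shells), $\beta+\gamma>1$ (Kato- and Newtonian-potential summability), and $(d-1)(\beta-\alpha/2)+\gamma=1$ (critical harmonic divergence of the $S$-sum) jointly achievable.
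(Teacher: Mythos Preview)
Your explicit shell construction is a genuinely different idea from the paper's proof (which rescales and superposes truncations of a function supplied by \cite[Proposition~1.6]{MR3914946}), and your verification of (a), (b), (c) is sound. The argument for (d), however, collapses at the $s$-integration: the asymptotic $S(\ind_{A_n},t,0,e)\sim t\,\Sigma_{s_n}^{1-d}R_n^{d-1}\delta_n$ is false, not merely hard to justify. Your heuristic ``width $\asymp t\Sigma_{s_n}$'' would require $\Sigma_s$ to be essentially constant on that window, but $t\Sigma_{s_n}\sim t\sqrt{2t/n}$ while $s_n=t/n$, so $t\Sigma_{s_n}/s_n=\sqrt{2tn}\to\infty$; the putative window spills past $s=0$, and over it $\Sigma_s^2\approx 2s$ varies by an unbounded factor. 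Obstacle~(iii) is therefore not a technicality but the symptom of a wrong answer.

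The correct size is easy to obtain and shows why your scheme cannot work at $(x,y)=(0,e)$. For $z\in A_n$ one has $|z|\approx R_n\ll 1$ (the shell encloses the origin since $R_n=n^{-\beta}\gg n^{-1}$) and $|z-e|\approx 1$, so $g(t-s,z,e)/g(t,0,e)$ is bounded above and below on $(0,t/2)$ and
\[
\int_0^t \frac{g(s,0,z)\,g(t-s,z,e)}{g(t,0,e)}\,ds \;\asymp_t\; \int_0^{t} g(s,0,z)\,ds \;\asymp\; |z|^{2-d}\;\asymp\;R_n^{2-d}.
\]
Integrating over $A_n$ gives $S(\ind_{A_n},t,0,e)\asymp_t |A_n|\,R_n^{2-d}\asymp R_n\delta_n=n^{-(\beta+\gamma)}$, and hence $\sum_n S(\ind_{A_n},t,0,e)<\infty$ \emph{precisely because} $\beta+\gamma>1$ --- the very inequality you invoke for (c). In other words, at the endpoints $(0,e)$ your quantity $S(|V|,t,0,e)$ and $\|\Delta^{-1}V\|_\infty$ are comparable, so the pair $(0,e)$ can never witness $\|S(V,t)\|_\infty=\infty$ for a $V$ satisfying (c). Your exponent calibration $(d-1)(\beta-\tfrac12)+\gamma=1$ was tuned to the incorrect asymptotic and does not survive. (Incidentally, the final remark is also off: the three constraints $\beta<\gamma$, $\beta+\gamma>1$, $(d-1)(\beta-\tfrac12)+\gamma=1$ are already jointly solvable for $d=3$, so nothing in the argument isolates $d\ge 4$.)
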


Such a strong result is not possible if $d=3$. Indeed, in this dimension the condition 
$\|\Delta^{-1} V\|_{\infty}<\infty$ implies (is equivalent to) $\sup_{t>0}\|S(V,t)\|_{\infty}<\infty$, see
\cite[(7) and (8)]{MR3914946}. In particular, if $V\in\pK_d$ has compact support,
then $\|\Delta^{-1} V\|_{\infty}<\infty$.

\begin{thm}\label{thm:d3}
Let $d=3$. There exists $V\leq 0$ with the following properties
\begin{enumerate}
\item[{\rm(a)}] $V\in\pK_3$,
\item[{\rm(b)}] $\|S(V ,t)\|_{\infty}=\infty$ for every $t>0$.
\end{enumerate}
\end{thm}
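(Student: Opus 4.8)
The plan is to construct $V\le 0$ supported in a fixed ball (so that $V\in\pK_3$ reduces to local integrability near the origin) so that $\|S(V,t)\|_\infty=\infty$ for every $t>0$. The natural candidate is a radial potential with a singularity at $0$ of the critical type $V(z)=-c|z|^{-2}(\log(1/|z|))^{-\alpha}\ind_{B(0,1/2)}(z)$ for a suitable exponent $\alpha$. First I would recall why a bare inverse-square singularity $|z|^{-2}$ in $d=3$ is \emph{not} in the Kato class: the integral $\int_0^t\!\int g(s,x,z)|z|^{-2}\,dz\,ds$ near $x=0$ behaves like $\int_{|z|\le 1}|z|^{-2}\cdot|z|^{-1}\,dz\sim\int_0 r^{-3}r^2\,dr=\int_0 r^{-1}\,dr=\infty$. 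Inserting the logarithmic factor with $\alpha>1$ restores integrability, hence $V\in\pK_3$, while $\alpha\le 1$ keeps it out. The delicate point is to pick $\alpha$ in the window where $V\in\pK_3$ holds (so $\alpha>1$) but the \emph{bilinear} quantity $S(V,t,x,y)$ still blows up.

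The heart of the matter is the extra cancellation-free lower bound available in $S$ when both $x$ and $y$ sit near the singularity. The key step is to estimate, for $x=y=0$ (or $x,y$ both very close to $0$),
\[
S(V,t,0,0)=\int_0^t\!\!\int_{\Rd}\frac{g(s,0,z)g(t-s,z,0)}{g(t,0,0)}|V(z)|\,dz\,ds
= \int_0^t\!\!\int_{\Rd}(4\pi)^{d/2}\Bigl(\tfrac{t}{s(t-s)}\Bigr)^{d/2}e^{-\frac{t|z|^2}{4s(t-s)}}|V(z)|\,dz\,ds.
\]
Here the crucial difference from the Kato quantity \eqref{Kato} is the \emph{double} Gaussian: the prefactor $(s(t-s))^{-d/2}$ is much larger than $s^{-d/2}$ when $s$ is close to $t$, and symmetrically when $s$ is close to $0$. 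After substituting $u=\tfrac{s(t-s)}{t}$ (so $u$ ranges over $(0,t/4]$ with $ds\asymp du$ away from $s=t/2$) one is left essentially with
\[
S(V,t,0,0)\gtrsim \int_0^{t/4}\!\!\int_{|z|\le 1/2} u^{-d/2}e^{-|z|^2/(4u)}\,|V(z)|\,dz\,du,
\]
and the $u$-integral of $u^{-d/2}e^{-|z|^2/(4u)}$ over $u\in(0,|z|^2)$ contributes a factor $\asymp|z|^{-d+2}\asymp|z|^{-1}$ in $d=3$, so the whole expression is comparable to $\int_{|z|\le1/2}|z|^{-1}|V(z)|\,dz\sim\int_0 r^{-1}\cdot r^{-2}(\log(1/r))^{-\alpha}\cdot r^2\,dr=\int_0 r^{-1}(\log(1/r))^{-\alpha}\,dr$, which diverges precisely when $\alpha\le 1$. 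Hence the window $\alpha\in(1,1]$ is empty --- meaning the simplest single-log construction fails, and one needs a \emph{double} logarithm: $V(z)=-|z|^{-2}(\log(1/|z|))^{-1}(\log\log(1/|z|))^{-\beta}\ind_{B(0,1/2)}(z)$. Then $V\in\pK_3$ holds iff $\beta>1$ (from the $r^{-1}(\log(1/r))^{-1}(\log\log(1/r))^{-\beta}$ integrand in the Kato quantity being integrable near $0$), and I claim $S(V,t,0,0)=\infty$ for \emph{all} such $\beta>1$, because the bilinear estimate above gains only a factor $|z|^{-1}$, exactly canceling one more power and leaving $\int_0 r^{-1}(\log(1/r))^{-1}(\log\log(1/r))^{-\beta}\,dr$ --- wait, that also converges.

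Let me correct the mechanism: the point is that $S(V,t,0,0)$ does \emph{not} merely reproduce the Kato integrand times $|z|^{-1}$; rather, the $u$-integral only runs up to $u\asymp t$, so for $|z|^2\gtrsim t$ the gain is truncated and for $|z|^2\lesssim t$ one genuinely gets the full $|z|^{-1}$ boost. Thus $S(V,t,0,0)\asymp\int_{|z|^2\le t}|z|^{-1}|V(z)|\,dz$ up to lower-order terms, and this equals (in $d=3$) $\int_0^{\sqrt t} r^{-1}|V(r)|\,r^2\,dr\,d\omega=\int_0^{\sqrt t} r\,|V(r)|\,dr$ --- so one needs $r\,|V(r)|$ \emph{non}-integrable near $0$ while the Kato quantity, which behaves like $\int_0^{\sqrt t} r^2|V(r)|\cdot r^{-1}\,dr=\int_0^{\sqrt t} r\,|V(r)|\,dr$ as well... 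These coincide! So in fact the correct statement is that near the singularity $S$ and the Kato quantity have the \emph{same} leading behavior, and the divergence of $S$ must come from the \emph{off-center} contribution, i.e.\ from letting $x$ and $y$ straddle the singularity along a ray, $x=\rho e$, $y=-\rho e$ with $\rho\to0$, where $g(t,x,y)=g(t,2\rho e,0)$ in the denominator is comparable to $g(t,0,0)$ but the numerator gains because $z=0$ is exactly the minimizer of $|x-z|^2+|z-y|^2$ and lies \emph{between} them. This is the genuinely new phenomenon that distinguishes $S$ from the Kato quantity, and capturing it quantitatively --- showing that for a well-chosen sequence $\rho_n\to 0$ the contribution of the annulus $|z|\asymp\rho_n$ to $S(V,t,\rho_n e,-\rho_n e)$ does not vanish, so that the supremum over $x,y$ is infinite even though each fixed $S(V,t,0,0)$ might be finite --- will be the main obstacle.

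Concretely, the steps I would carry out are: \textbf{(i)} fix $V(z)=-|z|^{-2}\phi(|z|)\ind_{B(0,1/2)}(z)$ with $\phi$ a slowly varying positive function to be tuned, and record the elementary equivalence ``$V\in\pK_3\iff\int_0\phi(r)r^{-1}\,dr<\infty$'' coming from \eqref{Kato}; \textbf{(ii)} derive the closed formula $S(V,t,x,y)=(4\pi)^{d/2}\int_0^t(\tfrac{t}{s(t-s)})^{d/2}e^{\frac{|x-y|^2}{4t}}\int_{\Rd}e^{-\frac{|x-z|^2}{4s}-\frac{|z-y|^2}{4(t-s)}}|V(z)|\,dz\,ds$ and complete the square in $z$ to see the Gaussian in $z$ is centered at the point $\tfrac{(t-s)x+sy}{t}$ with variance $\tfrac{s(t-s)}{t}$; \textbf{(iii)} choose $x_n=\rho_ne$, $y_n=-\rho_n e$ so the center traces the segment between them, pick $s\asymp t/2$ so the $z$-Gaussian has width $\asymp\sqrt t\gg\rho_n$ and is $\asymp1$ on the annulus $\rho_n\le|z|\le 2\rho_n$, and lower-bound the $s$-integral over $s\in[t/4,3t/4]$ and the $z$-integral over that annulus to get $S(V,t,x_n,y_n)\gtrsim c\int_{\rho_n\le|z|\le2\rho_n}|z|^{-2}\phi(|z|)\,dz\asymp c\,\phi(\rho_n)$ --- \emph{hmm}, that is bounded, so instead I must integrate over the \emph{full} range $|z|\lesssim\sqrt t$ picking up $\int_{\rho_n}^{\sqrt t}r^{-2}\phi(r)\cdot r^2\,dr\cdot(\text{small-}u\text{ boost})$, and arrange via the choice of $\phi$ (e.g. $\phi(r)=(\log(1/r))^{-1}$, which sits exactly on the Kato \emph{boundary}: $\int_0\phi(r)r^{-1}dr=\int_0(\log(1/r))^{-1}r^{-1}dr=\infty$, so this $\phi$ is \emph{not} Kato --- need $\phi(r)=(\log(1/r))^{-1}(\log\log(1/r))^{-2}$) that the resulting integral diverges as $n\to\infty$ while the Kato integral converges. \textbf{(iv)} The main obstacle, which I flag explicitly, is reconciling these two computations: the upper bound on the Kato quantity and the lower bound on $S$ both reduce to integrals of $r^{-1}\phi(r)$ type near $0$, so the separation between the classes is \emph{logarithmically thin} and the construction must exploit that the $s$-integral in $S$ runs over an interval of length $\asymp t$ rather than shrinking, producing an extra divergent factor $\log(1/\rho_n)$ (from $\int_{\rho_n^2}^{t} u^{-1}\,du$ after the substitution $u=s(t-s)/t$, restricted to small $u$) that the Kato quantity lacks --- once that extra $\log(1/\rho_n)$ factor is isolated, choosing $\phi(r)=(\log(1/r))^{-1}(\log\log(1/r))^{-2}$ gives $V\in\pK_3$ (since $\int_0 r^{-1}(\log\tfrac1r)^{-1}(\log\log\tfrac1r)^{-2}dr<\infty$) while $S(V,t,x_n,y_n)\gtrsim\log(1/\rho_n)\cdot(\log\log(1/\rho_n))^{-2}\to\infty$, completing the proof.
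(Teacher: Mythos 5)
Your proposed construction cannot work for a structural reason stated explicitly in the paper immediately before Theorem~\ref{thm:d3}: \emph{in dimension $3$, any $V\in\pK_3$ with compact support satisfies $\|\Delta^{-1}V\|_\infty<\infty$, and this is equivalent to $\sup_{t>0}\|S(V,t)\|_\infty<\infty$} (see \cite[(7) and (8)]{MR3914946}). Your $V$ is supported in $B(0,1/2)$ and lies in $\pK_3$ by design, so automatically $\|S(V,t)\|_\infty<\infty$ for every $t$. No choice of the slowly varying factor $\phi$ can evade this. In fact the difficulty you kept running into --- ``that also converges,'' ``these coincide'' --- is a symptom of this: your own computations (correctly) show that for a radial singularity concentrated at one point, $S(V,t,x,y)$ never exceeds the Kato quantity by more than a constant, regardless of where $x$ and $y$ are placed. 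The ``off-center straddling'' idea in step (iii)--(iv) does not produce the extra $\log(1/\rho_n)$ you claim: after the substitution $u=s(t-s)/t$ the small-$u$ piece contributes $\int_{\rho^2}^{t}u^{-1}\phi(\sqrt u)\,du\asymp\int_\rho^{\sqrt t}r^{-1}\phi(r)\,dr$, not $\phi(\rho)\int_{\rho^2}^{t}u^{-1}\,du$; the factor $\phi(\sqrt u)$ belongs \emph{inside} the integral, and the whole expression converges as $\rho\to0$ precisely when the Kato integral $\int_0 r^{-1}\phi(r)\,dr$ converges. You silently factored $\phi$ out, which is what manufactured the spurious $\log(1/\rho_n)$.

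The essential idea that is missing is that the support of $V$ must be \emph{unbounded}. The quantity that distinguishes $S$ from the Kato class (made explicit in the paper via the kernel $K(t,x,y)$ of Section~\ref{sec:new_test}) is the factor $e^{-(|z-x||y|-\langle z-x,y\rangle)/2}/|z-x|$, which is of order $1/|z-x|$ \emph{only when $z-x$ is nearly aligned with $y$}; for a radially symmetric singularity, the aligned $z$'s form a vanishingly thin cone and the directional gain is lost. The paper's construction (Corollary~\ref{cor:kato} and Lemma~\ref{lem:nosharp}) instead spreads the mass of $V$ along a sequence of cylinders $C_{k,\sqrt{k/(25n)}}$, $k=1,\dots,n$, marching out along the positive $z_1$-axis, with heights calibrated so that the alignment factor is $\gtrsim e^{-1/2}$ on the entire support when $y=(25n,\mathbf 0)$; then $\int |V_n(z)|/|z|\,dz\gtrsim\int_{1/(25n)}^{1/25}\frac{dr}{r|\ln r|\ln|\ln r|}\to\infty$, while the Kato condition holds because each ball of small radius meets at most one cylinder and the local contribution is controlled by Lemma~\ref{lem:kato}. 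To repair your proof you would have to abandon the compact-support setup entirely and build a potential of this elongated, non-compact type.
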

Theorems \ref{thm:dgeq4} and \ref{thm:d3} yield that for $d \ge 3$ there is a function $V\leq 0$
such that \eqref{est:gaus_b} holds with $\varepsilon_1< 1<\varepsilon_2$ arbitrarily close to $1$ and \eqref{est:sharp_time}
does not hold. Additionally, for $d \ge 4$ the function $V$ may be chosen in a such a way that $\supp V$ is compact and \eqref{est:gaus} 
holds, see Corollaries~\ref{cor:d4} and~\ref{cor:d3}.
We note that the latter cannot be done in the dimension $3$. In fact, if $d=3$ and $V\leq 0$, the global plain Gaussian estimates \eqref{est:gaus}  hold if and only if global sharp Gaussian estimates \eqref{est:sharp_uni} hold, see \cite[Page 6]{MR3914946}.
From Theorem \ref{thm:d3} we deduce that such phenomenon does not occur for local in time bounds.

The situation is radically different if $d\le2$. In this case the condition $V \in \mathcal{K}_d$ yields $\|S(V ,t)\|_{\infty}<\infty$. It is a consequence of the following theorem.

\begin{thm}\label{thm:d2d1}
Let $d=2$ or $d=1$. There exists an absolute constant $c>0$ such that for all $T>0$ and $V$ we have
\begin{align}\label{ineq:d2d1}
c^{-1} \sup_{x\in\Rd} \int_0^T\int_{\Rd} g(s,x,z) |V(z)|dzds \leq   
\|S(V)\|_{T,\infty} \leq c \sup_{x\in\Rd} \int_0^T \int_{\Rd} g(s,x,z) |V(z)|dzds\,.
\end{align}
\end{thm}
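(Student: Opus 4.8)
The plan is to prove the two inequalities separately, and in both cases to reduce the double space-time integral defining $S(V,t,x,y)$ to the one-parameter quantity $\int_0^T\!\int_{\Rd} g(s,x,z)|V(z)|\,dz\,ds$ by analyzing the Gaussian ratio $g(s,x,z)g(t-s,z,y)/g(t,x,y)$. Writing out the exponents and completing the square in $z$, one has the classical identity
\[
\frac{g(s,x,z)\,g(t-s,z,y)}{g(t,x,y)}
=\bigl(4\pi\,\tfrac{s(t-s)}{t}\bigr)^{-d/2}\exp\!\Bigl(-\tfrac{t}{4s(t-s)}\bigl|z-\tfrac{(t-s)x+sy}{t}\bigr|^2\Bigr),
\]
i.e.\ it is, as a function of $z$, a Gaussian density with mean the point $m=\tfrac{(t-s)x+sy}{t}$ on the segment $[x,y]$ and variance proportional to $\sigma^2:=s(t-s)/t$. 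The upper bound is the easy direction: since $s(t-s)/t\le s$, this Gaussian-in-$z$ has variance at most that of $g(s,m,\cdot)$ up to a constant, so one can dominate the kernel above by $C\,g(\lambda s,m,z)$ for a suitable absolute $\lambda$ and then split the $s$-integral at $t/2$: on $[0,t/2]$ one has $t-s\ge t/2$ hence $\sigma^2\asymp s$, and by symmetry the same holds on $[t/2,t]$ after swapping the roles of $s$ and $t-s$ and of $x$ and $y$. Taking the supremum over the (moving) center $m\in\Rd$ and then over $x,y$ gives $\|S(V)\|_{T,\infty}\le c\sup_{x}\int_0^T\!\int_{\Rd} g(s,x,z)|V(z)|\,dz\,ds$; note this step works in every dimension, so only the lower bound is genuinely $d\le 2$.

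For the lower bound, fix $x$ and choose $y=x$ (this is legitimate since $S(V,t,x,y)$ is defined for all $x,y$ and the sup over $y$ only helps), so that $m=x$ for every $s\in(0,t)$ and
\[
S(V,t,x,x)=\int_0^t\bigl(4\pi\,\tfrac{s(t-s)}{t}\bigr)^{-d/2}\!\!\int_{\Rd}\exp\!\Bigl(-\tfrac{t}{4s(t-s)}|z-x|^2\Bigr)|V(z)|\,dz\,ds .
\]
Restricting the $s$-integral to $s\in(t/2,t)$ one has $t-s\in(0,t/2)$ and $s(t-s)/t\asymp (t-s)$; substituting $u=t-s\in(0,t/2)$ turns the inner Gaussian into one comparable to $g(u,x,z)$ up to absolute constants in the normalization and in the exponent, giving
\[
S(V,t,x,x)\ge c^{-1}\int_0^{t/2}\!\!\int_{\Rd} g(c\,u,x,z)|V(z)|\,dz\,du .
\]
Here is the point where $d\le2$ enters and where I expect the main (though modest) obstacle: one must absorb the extra constant $c$ inside $g(c\,u,\cdot)$ back to $g(u,\cdot)$ at the cost of an absolute factor. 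In general $g(cu,x,z)\ge c^{-d/2}e^{-(1-1/c)|z-x|^2/(4u)}g(u,x,z)$, and the exponential defect is not uniformly bounded; however, after we have taken $\sup_x$ and are comparing with $\sup_x\int_0^{T}\!\int g(s,x,z)|V(z)|\,dz\,ds$, a standard Kato-class trick removes it: the function $s\mapsto \sup_x\int_0^s\!\int g(u,x,z)|V(z)|\,dz\,du$ is nondecreasing and, more importantly, a chaining/semigroup argument (using $g(u+v,x,z)=\int g(u,x,w)g(v,w,z)\,dw$ and $\sup_x\int g(u,x,w)\,dw=1$) shows $\sup_x\int_0^{c s}\!\int g\le \lceil c\rceil\,\sup_x\int_0^{s}\!\int g$ for $c>1$ with an absolute constant $\lceil c\rceil$. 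Combining this with the display above and taking $t=T$, then $\sup_x$, yields $\|S(V)\|_{T,\infty}\ge c^{-1}\sup_x\int_0^T\!\int g(s,x,z)|V(z)|\,dz\,ds$.

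I should stress that the lower bound does \emph{not} hold for $d\ge3$, and the reason is visible in the computation: when $y\ne x$ the normalizing factor $(s(t-s)/t)^{-d/2}$ near the endpoints $s\to0$ or $s\to t$ blows up like $s^{-d/2}$ (resp.\ $(t-s)^{-d/2}$) and cannot be matched by the single factor $s^{-d/2}$ in $g(s,x,z)$ once we also ask the exponent to localize $z$ near $x$ rather than near the far point $y$; for $d\le2$ the endpoint singularity $s^{-d/2}$ is mild enough that it is dominated after integration, which is exactly the mechanism making \eqref{ineq:d2d1} an \emph{equivalence} only in low dimensions. In the write-up I would therefore: (i) record the completing-the-square identity as a preliminary computation; (ii) prove the upper inequality by the symmetric split at $t/2$; (iii) prove the lower inequality by specializing $y=x$, restricting $s$ to $(t/2,t)$, and invoking the semigroup chaining lemma to clean up constants; and (iv) remark on where $d\le 2$ is used. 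The only real care needed is bookkeeping of the absolute constants through the substitution and the chaining step; none of it is deep, but it must be done cleanly so that $c$ genuinely depends on nothing but the dimension (and in fact can be taken the same for $d=1$ and $d=2$).
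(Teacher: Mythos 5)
Your lower-bound argument (take $y=x$, restrict $s\in(t/2,t)$, substitute $u=t-s$, and chain via the semigroup property) is essentially correct, and it is in spirit close to what the paper does: the paper gets the lower bound from \eqref{ineq:S_N}, \eqref{ineq:N_r} (which compares $\|S(V)\|_{T,\infty}$ to $\dJ_*(V,T/2)$), the choice $\alpha=0$, and the doubling inequality \eqref{ineq:2T-T}. Note, however, that this direction is \emph{dimension-free}; nothing in your computation (nor in the paper's) uses $d\le 2$.

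The upper bound is where the genuine gap sits, and unfortunately you have the dimensional dichotomy exactly backwards. After the completing-the-square identity and the split at $t/2$, you want to bound
\[
\int_0^{t/2}\int_{\Rd} g\bigl(\lambda s, m(s),z\bigr)|V(z)|\,dz\,ds,\qquad m(s)=\tfrac{(t-s)x+sy}{t},
\]
by $c\,\sup_{w}\int_0^T\int_{\Rd} g(s,w,z)|V(z)|\,dz\,ds$. But the centre $m(s)$ moves along the segment $[x,y]$; replacing $\int g(s,m(s),z)|V(z)|dz$ by $\sup_w\int g(s,w,z)|V(z)|dz$ and then integrating in $s$ produces $\int_0^{t/2}\sup_w\int g(s,w,z)|V(z)|\,dz\,ds$, which is the \emph{integral of the supremum} and in general strictly dominates (and is not comparable to) the \emph{supremum of the integral} $\sup_w\int_0^{t/2}\int g(s,w,z)|V(z)|\,dz\,ds$. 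This is not a bookkeeping issue: if your argument were correct it would hold in every dimension, but Theorems~\ref{thm:dgeq4} and~\ref{thm:d3} of the paper exhibit, for each $d\ge 3$, a $V\in\pK_d$ (so $\sup_x\int_0^T\int g|V|<\infty$) with $\|S(V,t)\|_\infty=\infty$ for every $t>0$ — precisely a failure of the upper inequality. The mass of $V$ in those constructions is laid out along the path swept by $m(s)$.

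The paper's route to the upper bound is therefore necessarily more involved: it first reduces $\|S(V)\|_{T,\infty}$ to $\|K(V,T)\|_\infty$ via Theorem~\ref{thm:new_equiv} (which rests on Proposition~\ref{thm:J_K_new} and the Bessel-kernel estimates), then decomposes $\RR^2$ into regions $A_1,A_2,A_3$ adapted to the direction of $y$. On $A_1$ and $A_2$ one reduces to \eqref{est:A2}. The crux is $A_3$, treated by Lemma~\ref{lem:D}: one tiles the parabolic tube $\{r(|z|-z_1)\le \text{const}\}$ by rectangles $P_{n,k}$ each contained in a ball of radius~$2$, and the resulting bound depends on the convergence of $\frac{1}{\sqrt{r}}\sum_{k=0}^{\lfloor r/\delta\rfloor}(k\delta/2+1)^{-1/2}$ uniformly in $r$, which is exactly where $d=2$ is used (the analogue of that sum diverges for $d\ge3$). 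For $d=1$ the corresponding step is \cite[Lemma~4.2]{MR3200161}. Your proposal has none of this machinery, and the step you flagged as "modest" (the constant-absorption on the lower bound) is indeed modest; the real obstacle, on the upper-bound side, is not addressed.
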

As a corollary of Theorem \ref{thm:d2d1}, we characterize classes $\mathcal{K}_d$ and $\widehat{\mathcal{K}}_d$ for $d\le 2$, by using the quantity $\|S(V)\|_{T,\infty}$, see Corollaries~\ref{cor:d2_1} and~\ref{cor:d1_1}. Additionally, we obtain that for $d\le 2$ and $V\le0$, \eqref{est:sharp_time} holds if and only if $V\in \widehat{\mathcal{K}}_2$. For $d=1$ the same property holds for $V\ge0$. See Corollaries~\ref{cor:d2_1} and~\ref{cor:d1_2}.

The rest of the paper is organized as follows. In Section~\ref{sec:prel} we collect other quantities used in the literature to analyse \eqref{est:sharp_time}, and we show that they are comparable. We also discuss analogies with various descriptions of the Kato class.
In Section~\ref{sec:new_test} we introduce an explicit kernel $K(t,x,y)$ and use it to propose another test for \eqref{est:sharp_time} to hold.
In that section we also formulate and prove Theorem~\ref{thm:new_equiv}. In Section~\ref{sec:proofs}
we prove Theorems \ref{thm:dgeq4} -- \ref{thm:d2d1}.
In Section 5 we give  corollaries of the main results of the paper
and the proof of Theorem~\ref{thm:t1}.

Throughout the paper $B(x,r)$ denotes a ball of radius $r>0$ in $\Rd$ centred at $x\in\Rd$. In short we write $B_r=B(0,r)$.

\subsection*{Acknowledgements}
We thank Krzysztof Bogdan for helpful comments on  the paper.

\section{Preliminaries}\label{sec:prel}

\subsection{An overview of tests for sharp bounds}\label{sec:overview}

We have already seen in Lemma~\ref{lem:comb}
how to use a test based on $S(V,t,x,y)$
to analyse \eqref{est:sharp_time}.
In \cite{MR1978999} Zhang introduced yet another object,
for $t>0$ and $x,y\in\Rd$,
\begin{align*}
N(V,t,x,y)=&\int_0^{t/2}\int_{\Rd}\frac{e^{-|z-y+(\tau/t)(y-x)|^2/(4\tau)}}{\tau^{d/2}}|V(z)|dzd\tau\\  
&\qquad+\int_{t/2}^t\int_{\Rd} \frac{e^{-|z-y+(\tau/t)(y-x)|^2/(4(t-\tau))}}{(t-\tau)^{d/2}} |V(z)|dzd\tau \,. 
\end{align*}
It is actually comparable with $S$ in the following sense,
\begin{align}
S(V,t,x,y)&\geq m_1\, N(V,t/2,x,y)\,,\tag{L} \label{L}\\
S(V,t,x,y)&\leq m_2\, N(V,t,x,y)\,,\tag{U} \label{U}
\end{align}
where constants $m_1$, $m_2$ depend only on $d$, see \cite[(L) and (U) on page 5]{MR3914946}.
The quantity $N$ gives rise to
\begin{align*}
\|N(V,t)\|_{\infty}= \sup_{x,y\in\Rd} N(V,t,x,y)\,,\qquad
\|N(V)\|_{T,\infty}= \sup_{0<t\leq T} \|N(V,t)\|_{\infty}\,.
\end{align*}
On the other hand, in  \cite{MR1994762} Milman and Semenov (for $d\geq 3$) proposed to use for $\lambda> 0$,
$$
e_*(V,\lambda)= \sup_{\alpha\in\Rd} \|(\lambda-\Delta +2\alpha \cdot \nabla)^{-1} |V| \|_{\infty}\,.
$$
The operator $(\lambda-\Delta +2\alpha \cdot \nabla)^{-1}$
is an integral operator with a kernel equal to
$\int_0^{\infty} e^{-\lambda s} \mJ_{\alpha}(s,x,y) ds$,
where
for $\alpha\in\Rd$ and
$t>0$, $x,y\in\Rd$, 
the function
 $\mJ_{\alpha}(t,x,y)$
is the fundamental solution of the equation $\partial_t  =\Delta-2\alpha\cdot \nabla$, i.e.,
$$\mJ_{\alpha}(t,x,y)= g(t,x-2\alpha t,y)\,.$$
We will show that $e_*$ is also comparable with $S$ and $N$. To this end we will use
$$
\dJ_*(V,t)= \sup_{\alpha, x\in\Rd} \int_0^{t}\int_{\Rd}  \mJ_{\alpha}(s,x,z) |V(z)|\,dzds\,.
$$

\begin{lem}\label{lem:N_J}
For all $t>0$ and $V$ we have
\begin{align*}
 \dJ_*(V,t/2) \leq (4\pi)^{-d/2}\|N(V,t)\|_{\infty}
 \leq 2\, \dJ_*(V,t/2)\,.
\end{align*}
\end{lem}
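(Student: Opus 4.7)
The plan is to identify each of the two summands in $N(V,t,x,y)$, after absorbing a factor of $(4\pi)^{d/2}$, as a time integral (over $(0,t/2)$) of the drift-diffusion density $\mJ_\alpha(s,x',z)=g(s,x'-2\alpha s,z)$, for a suitable choice of drift $\alpha$ and base point $x'$. The key observation is that $\mJ_\alpha(s,x',z)=(4\pi s)^{-d/2}e^{-|z-x'+2\alpha s|^2/(4s)}$, so the factor $\tau^{-d/2}$ (resp.\ $(t-\tau)^{-d/2}$) inside $N$ will convert, up to $(4\pi)^{d/2}$, an exponential of the right quadratic form into a $\mJ_\alpha$ density.

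For the first summand, with $\tau\in(0,t/2)$, the exponent $-|z-y+(\tau/t)(y-x)|^2/(4\tau)$ matches $-|z-y+2\alpha\tau|^2/(4\tau)$ with $\alpha=(y-x)/(2t)$; hence this summand equals $(4\pi)^{d/2}\int_0^{t/2}\int_{\Rd}\mJ_\alpha(\tau,y,z)|V(z)|\,dz\,d\tau$. For the second summand, substituting $u=t-\tau\in(0,t/2)$ and using $-y+((t-u)/t)(y-x)=-x-(u/t)(y-x)$, one brings the exponent to the form $-|z-x+2\alpha' u|^2/(4u)$ with $\alpha'=(x-y)/(2t)$, so this summand equals $(4\pi)^{d/2}\int_0^{t/2}\int_{\Rd}\mJ_{\alpha'}(u,x,z)|V(z)|\,dz\,du$.

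The upper bound then follows at once: each of the two resulting integrals is, by the very definition of $\dJ_*$, bounded by $\dJ_*(V,t/2)$, giving $N(V,t,x,y)\le 2(4\pi)^{d/2}\dJ_*(V,t/2)$; take the supremum over $(x,y)$ and divide by $(4\pi)^{d/2}$. For the lower bound, given arbitrary $\alpha\in\Rd$ and $x_0\in\Rd$, pick $y=x_0$ and $x=x_0-2\alpha t$, so that $(y-x)/(2t)=\alpha$; then the first summand of $N(V,t,x,y)$ is exactly $(4\pi)^{d/2}\int_0^{t/2}\int_{\Rd}\mJ_\alpha(\tau,x_0,z)|V(z)|\,dz\,d\tau$, and the second summand only adds a nonnegative quantity. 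Taking the supremum over $(\alpha,x_0)$ yields $(4\pi)^{d/2}\dJ_*(V,t/2)\le\|N(V,t)\|_\infty$.

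The only nontrivial point is the algebraic bookkeeping needed to recognize the $\mJ_\alpha$ form in each summand, and especially to do so in the second one (which requires both the change of variables $u=t-\tau$ and a symmetric switch of the roles of $x$ and $y$ through the sign flip $\alpha\mapsto\alpha'=-\alpha$). Once the substitutions are in place the rest of the argument is direct supremum bookkeeping.
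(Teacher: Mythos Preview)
Your argument is correct and matches the paper's approach: both recognize that each summand of $N(V,t,x,y)$ is, up to the factor $(4\pi)^{d/2}$, a time-$(0,t/2)$ integral of a drifted Gaussian $\mJ_\alpha$, and then conclude by supremum bookkeeping. The paper computes only the first summand and then cites \cite[Lemma~3.1]{MR3914946} for the symmetry between the two summands, whereas you carry out that symmetry (the substitution $u=t-\tau$ and the swap $x\leftrightarrow y$, $\alpha\leftrightarrow -\alpha$) explicitly; the content is the same.
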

\begin{proof}
Note that
\begin{align*}
\sup_{x,y\in\Rd} \int_0^{t/2}\int_{\Rd}\frac{e^{-|z-y+(\tau/t)(y-x)|^2/(4\tau)}}{\tau^{d/2}}|V(z)|\,dzd\tau
&=\sup_{\alpha, x\in\Rd} \int_0^{t/2}\int_{\Rd}\frac{e^{-|z-x+ 2\alpha \tau|^2/(4\tau)}}{\tau^{d/2}}|V(z)|\,dzd\tau\\
&=(4\pi)^{d/2} \sup_{\alpha, x\in\Rd}
\int_0^{t/2}\int_{\Rd}  \mJ_{\alpha}(\tau,x,z) |V(z)|\,dzd\tau\,.
\end{align*}
The assertion of the lemma follows from \cite[Lemma~3.1]{MR3914946}.
\end{proof}

\begin{lem}\label{lem:R_J}
For all $\lambda>0$, $\alpha\in\Rd$ and $V$ we have
\begin{align*}
(1-e^{-1})\, \|  (\lambda -\Delta +2\alpha\cdot \nabla )^{-1}  |V| \|_{\infty}
&\leq 
\sup_{x\in\Rd} \int_0^{1/\lambda}\int_{\Rd}  \mJ_{\alpha}(s,x,z) |V(z)|\,dzds\,,\\
e\, \|  (\lambda -\Delta +2\alpha \cdot \nabla )^{-1}  |V| \|_{\infty}
&\geq \sup_{x\in\Rd} \int_0^{1/\lambda}\int_{\Rd}  \mJ_{\alpha}(s,x,z) |V(z)|\,dzds\,.
\end{align*}
\end{lem}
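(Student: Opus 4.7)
My plan is to work directly from the integral kernel representation
\[
(\lambda-\Delta+2\alpha\cdot\nabla)^{-1}|V|(x)=\int_0^{\infty} e^{-\lambda s}\int_{\Rd} \mJ_\alpha(s,x,z)|V(z)|\,dz\,ds,
\]
stated just before the lemma, and to compare this Laplace-type integral to the truncated unweighted integral
\[
F(\alpha,x):=\int_0^{1/\lambda}\int_{\Rd} \mJ_\alpha(s,x,z)|V(z)|\,dz\,ds.
\]
The second inequality (the one with the factor $e$) is the easy direction: on $[0,1/\lambda]$ we have $e^{-\lambda s}\geq e^{-1}$, so
$F(\alpha,x)\leq e\int_0^{1/\lambda} e^{-\lambda s}\int_{\Rd} \mJ_\alpha(s,x,z)|V(z)|\,dz\,ds\leq e\,(\lambda-\Delta+2\alpha\cdot\nabla)^{-1}|V|(x)$,
and taking $\sup_x$ gives exactly the claim.

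For the first inequality I would split $[0,\infty)$ into the intervals $I_k=[k/\lambda,(k+1)/\lambda)$, $k=0,1,\ldots$, on which $e^{-\lambda s}\leq e^{-k}$. The key step is to bound
\[
\int_{I_k}\int_{\Rd}\mJ_\alpha(s,x,z)|V(z)|\,dz\,ds
\]
by $\sup_w F(\alpha,w)$ uniformly in $k$ and $x$. For this I substitute $s=k/\lambda+u$ and invoke the Chapman--Kolmogorov identity for $\mJ_\alpha$, which follows from the explicit formula $\mJ_\alpha(t,x,y)=g(t,x-2\alpha t,y)$ and a translation in the $w$-variable. This rewrites the inner integral as
\[
\int_{\Rd}\mJ_\alpha(k/\lambda,x,w)\left[\int_0^{1/\lambda}\int_{\Rd}\mJ_\alpha(u,w,z)|V(z)|\,dz\,du\right]dw
=\int_{\Rd}\mJ_\alpha(k/\lambda,x,w)\,F(\alpha,w)\,dw.
\]
Since $\mJ_\alpha(t,x,\cdot)$ integrates to $1$ (it is a translate of the Gauss--Weierstrass kernel), the last expression is at most $\sup_w F(\alpha,w)$.

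Summing over $k$ gives
\[
(\lambda-\Delta+2\alpha\cdot\nabla)^{-1}|V|(x)\leq\sum_{k=0}^{\infty}e^{-k}\sup_w F(\alpha,w)=\frac{1}{1-e^{-1}}\sup_w F(\alpha,w),
\]
and taking $\sup_x$ and multiplying by $1-e^{-1}$ yields the first inequality. The only nontrivial point is the Chapman--Kolmogorov step for $\mJ_\alpha$, but this is a one-line calculation from $\mJ_\alpha(t,x,y)=g(t,x-2\alpha t,y)$; no analytic subtlety is involved, so I do not anticipate a real obstacle.
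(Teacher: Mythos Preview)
Your argument is correct and is exactly the standard proof behind the cited \cite[Lemma~3.3]{MR3713578}, which the paper invokes without writing out the details; the splitting into intervals $[k/\lambda,(k+1)/\lambda)$ together with the semigroup (Chapman--Kolmogorov) identity for $\mJ_\alpha$ and the fact that $\int \mJ_\alpha(t,x,w)\,dw=1$ is precisely what that lemma does for a general sub-Markovian semigroup. One cosmetic point: at $k=0$ the kernel $\mJ_\alpha(0,x,\cdot)$ is a Dirac mass, so it is cleaner to treat that term directly as $F(\alpha,x)\leq\sup_w F(\alpha,w)$ rather than through the Chapman--Kolmogorov step, but this does not affect the argument.
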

\begin{proof}
For $t>0$, $x\in\Rd$ we let 
$P_t f(x)=\int_{\Rd} \mJ_{\alpha}(t,x,z)f(z)\,dz$. Note that
$$
\|  (\lambda -\Delta +2\alpha\cdot \nabla )^{-1}  |V| \|_{\infty}
 = \sup_{x\in\Rd} \int_0^{\infty} e^{-\lambda t}P_t |V|(x)\,dt\,,
$$
and
$$
\sup_{x\in\Rd}\int_0^{1/\lambda}\int_{\Rd}  \mJ_{\alpha}(s,x,z) |V(z)|\,dzds = \sup_{x\in\Rd} \int_0^{1/\lambda} P_t |V| (x)\,dz\,.
$$
Therefore, the desired inequalities follow from \cite[Lemma~3.3]{MR3713578}.
\end{proof}

Recall from \cite[Corollary~2.3]{MR3914946} that 
for all $T>0$ and $V$ we have
\begin{align}\label{ineq:2T-T}
\|S(V)\|_{2T,\infty}\leq 2 \|S(V)\|_{T,\infty}
\end{align}
Now, 
\eqref{L}, \eqref{U}, \eqref{ineq:2T-T}, Lemma~\ref{lem:N_J} and Lemma~\ref{lem:R_J} 
provide the following comparability.
\begin{prop}\label{prop:comp}
For all $T>0$ and $V$ we have
\begin{align}\label{ineq:S_N}
\frac{m_1}{2} \|N(V)\|_{T,\infty} &\leq \|S(V)\|_{T,\infty}\leq m_2 \|N(V) \|_{T,\infty}\,,
\end{align}
as well as
\begin{align}\label{ineq:N_r}
 \dJ_*(V,T/2)  &\leq   (4\pi)^{-d/2}\,\|N(V)\|_{T,\infty} \leq 2 \, \dJ_*(V,T/2)\,,
\end{align}
and
\begin{align}\label{ineq:resolvent_semigroup}
(1-e^{-1})\, e_*(V,1/T) &\leq 
\dJ_*(V,T)
\leq e \,  e_*(V,1/T)\,.
\end{align}
\end{prop}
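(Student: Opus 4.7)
The plan is to glue together the four pieces already on the table: the pointwise comparisons \eqref{L}--\eqref{U}, the doubling-in-time inequality \eqref{ineq:2T-T}, Lemma~\ref{lem:N_J}, and Lemma~\ref{lem:R_J}. Each line of the proposition corresponds to one of these tools; almost no new computation is needed.

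For the upper bound in \eqref{ineq:S_N} I would take \eqref{U}, pass to the supremum in $x,y\in\Rd$ to get $\|S(V,t)\|_\infty\leq m_2\|N(V,t)\|_\infty$, and then take the supremum over $0<t\leq T$. For the lower bound I would use \eqref{L} to obtain $\|S(V,t)\|_\infty\geq m_1\|N(V,t/2)\|_\infty$, which after a supremum over $0<t\leq 2T$ yields $\|S(V)\|_{2T,\infty}\geq m_1\|N(V)\|_{T,\infty}$. The doubling estimate \eqref{ineq:2T-T} converts the left-hand side into $2\|S(V)\|_{T,\infty}$, giving the asserted factor $m_1/2$.

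For \eqref{ineq:N_r} I would just take the supremum over $0<t\leq T$ in Lemma~\ref{lem:N_J}. Monotonicity of $\dJ_*(V,\cdot)$ in its time argument guarantees that $\sup_{0<t\leq T}\dJ_*(V,t/2)=\dJ_*(V,T/2)$, so the left-hand inequality of the lemma upgrades directly to the left-hand inequality of \eqref{ineq:N_r}; the right-hand inequality of the lemma trivially survives the supremum since the right-hand side is already nondecreasing in $t$. For \eqref{ineq:resolvent_semigroup} I would plug $\lambda=1/T$ into Lemma~\ref{lem:R_J} and take the supremum over $\alpha\in\Rd$, recognizing the left-hand side as $e_*(V,1/T)$ and the right-hand side as $\dJ_*(V,T)$.

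The only step with any real friction is the factor-of-two bookkeeping in \eqref{ineq:S_N}: the shift from $N(V,t,\cdot,\cdot)$ to $N(V,t/2,\cdot,\cdot)$ in the lower comparison \eqref{L} forces an appeal to the doubling inequality \eqref{ineq:2T-T}. Everything else is a direct invocation of the preceding lemmas with a supremum applied.
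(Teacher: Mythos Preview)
Your proposal is correct and matches the paper's approach exactly: the paper simply states that \eqref{L}, \eqref{U}, \eqref{ineq:2T-T}, Lemma~\ref{lem:N_J} and Lemma~\ref{lem:R_J} provide the comparability, and you have spelled out precisely how those ingredients combine. The only nontrivial piece of bookkeeping --- the halving of the time argument in \eqref{L} absorbed via \eqref{ineq:2T-T} --- is handled correctly.
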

Thus, from Proposition~\ref{prop:comp}
and 
Lemma~\ref{lem:comb}
we conclude that the four tests on $V$,  for
the local sharp Gaussian estimates \eqref{est:sharp_time} 
to hold,
based on
$S$, $N$, $\dJ_*$ and $e_*$
are equivalent if $V\leq 0$; 
and comparable if $V\geq 0$
(in that case the exact magnitudes of quantities used in those tests matter, see part 3) of Lemma~\ref{lem:comb}).
In this context we highly
recommend the reader
to get familiar with
\cite[Theorem~1B and Theorem~1C]{MR1994762}), where $e_*$ is brought into play.

We end this subsection by one more observation on $S$ and $N$.
Due to 
Lemma~\ref{lem:N_J},
\eqref{ineq:2T-T},
\eqref{ineq:S_N}
and
\eqref{L}
the supremum over $0<t\leq T$
in $\|S(V)\|_{T,\infty}$ and $\|N(V)\|_{T,\infty}$  is, in a sense, dispensable.
\begin{cor}\label{cor:reduction}
For all $T>0$ and $V$ we have
$$\|N(V,T)\|_{\infty} \leq \|N(V)\|_{T,\infty}\leq 2 \|N(V,T)\|_{\infty}\,,$$
and
$$\|S(V,T)\|_{\infty} \leq \|S(V)\|_{T,\infty}\leq 4(m_2/m_1) \|S(V,T)\|_{\infty}\,.$$
\end{cor}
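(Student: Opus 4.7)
Both of the trivial inequalities $\|N(V,T)\|_{\infty}\leq \|N(V)\|_{T,\infty}$ and $\|S(V,T)\|_{\infty}\leq \|S(V)\|_{T,\infty}$ follow immediately from the definition of the suprema, so the content of the corollary lies entirely in the two reverse bounds. The auxiliary observation I would isolate first is that $t\mapsto \dJ_*(V,t)$ is non-decreasing, since the integrand defining it is non-negative.

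For the bound on $N$, fix $0<t\leq T$ and chain together the upper half of Lemma~\ref{lem:N_J} at time $t$, the monotonicity of $\dJ_*$, and the lower half of Lemma~\ref{lem:N_J} at time $T$:
\[
(4\pi)^{-d/2}\|N(V,t)\|_{\infty} \,\leq\, 2\,\dJ_*(V,t/2) \,\leq\, 2\,\dJ_*(V,T/2) \,\leq\, 2(4\pi)^{-d/2}\|N(V,T)\|_{\infty}.
\]
Cancelling the common prefactor and taking the supremum over $t\in(0,T]$ yields $\|N(V)\|_{T,\infty}\leq 2\,\|N(V,T)\|_{\infty}$.

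For the bound on $S$, I would combine \eqref{ineq:S_N} with the $N$-bound just proved to get $\|S(V)\|_{T,\infty}\leq m_2\|N(V)\|_{T,\infty}\leq 2m_2\|N(V,T)\|_{\infty}$. Next, \eqref{L} applied at the parameter $2T$ reads $m_1 N(V,T,x,y)\leq S(V,2T,x,y)$, so $\|N(V,T)\|_{\infty}\leq (1/m_1)\|S(V,2T)\|_{\infty}$. The final input is the pointwise doubling $\|S(V,2T)\|_{\infty}\leq 2\,\|S(V,T)\|_{\infty}$, which is the subadditive form of \eqref{ineq:2T-T}; it is obtained from the Chapman--Kolmogorov identity $g(2T,x,y)=\int g(T,x,w)g(T,w,y)\,dw$ by splitting the $ds$-integral in \eqref{def:S} at $s=T$ and using the same identity in each of the two pieces. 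Multiplying the three bounds together delivers $\|S(V)\|_{T,\infty}\leq 4(m_2/m_1)\|S(V,T)\|_{\infty}$, exactly as claimed.

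The only delicate point in this plan is the last input: the verbatim statement of \eqref{ineq:2T-T} carries suprema on both sides, so plugging it in directly would produce merely the trivially true inequality $\|S(V)\|_{T,\infty}\leq (4m_2/m_1)\|S(V)\|_{T,\infty}$. What is actually needed is the pointwise $t$-subadditivity that lies behind \cite[Corollary~2.3]{MR3914946}, and one has to extract it in the explicit form above; no substantive additional computation is required, but the distinction must be made consciously.
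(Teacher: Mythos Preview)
Your argument is correct and follows essentially the same route as the paper: the paper's one-line justification cites exactly the ingredients you use (Lemma~\ref{lem:N_J}, \eqref{ineq:2T-T}, \eqref{ineq:S_N}, and \eqref{L}), and your chain of inequalities is the natural way to assemble them. Your observation that one needs the fixed-time inequality $\|S(V,2T)\|_{\infty}\leq 2\|S(V,T)\|_{\infty}$ rather than the supremum form literally stated in \eqref{ineq:2T-T} is apt; the Chapman--Kolmogorov subadditivity you sketch is precisely the content of \cite[Corollary~2.3]{MR3914946}, which the paper is implicitly invoking in its stronger pointwise form.
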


\subsection{Kato class analogies}

It is well known that $V\in \pK_d$ if and only if
\begin{align*}
\lim_{\lambda \to \infty} \| (\lambda -\Delta)^{-1}|V| \|_{\infty} =0\,.
\end{align*}
Actually, taking $\alpha=0$ in Lemma~\ref{lem:R_J}, for all $\lambda>0$  and $V$ we get
\begin{align*}
(1-e^{-1})\| (\lambda -\Delta)^{-1}|V| \|_{\infty}
\leq 
\sup_{x\in\Rd} \int_0^{1/\lambda}\int_{\Rd}  g(s,x,z) |V(z)|\,dzds
\leq
e
\| (\lambda -\Delta)^{-1}|V| \|_{\infty}\,,
\end{align*}
which is rather a general relation between a semigroup and its resolvent, see \cite[Lemma~3.3]{MR3713578}.
In particular,
$V$ belongs to 
 the enlarged Kato class
if and only if
$\| (\lambda -\Delta)^{-1}|V| \|_{\infty}<\infty$ 
for some (every) $\lambda>0$.
In view of our main discourse 
on sharp Gaussian estimates
a counterpart of those inequalities
is given in
\eqref{ineq:resolvent_semigroup},
also as a consequence of Lemma~\ref{lem:R_J}.

The following result leads to an alternative description of the Kato class
(see \cite[Theorem~1.27]{MR1772266}).
\begin{lem}\label{lem:Katodescr}
There are constants $C_1$ and $C_2$ 
that depend only on dimension $d$ and
such that for all $t>0$ and $V$ we have 
\begin{align}
C_1 A(t) &\le \left[\sup_{x\in\Rd} \int_{|z-x|<\sqrt{4t}} \frac{|V(z)|}{|z-x|^{d-2}}\,dz\right] \le C_2 A(t)\,,&\qquad d\geq 3; \label{est:A3} \\
C_1 A(t) &\le  \left[\sup_{x\in\RR^2} \int_{|z-x|<\sqrt{4t}} |V(z)|\log\frac{4t}{|z-x|^2} \,dz\right] \le C_2 A(t) \,,&\qquad d=2; \label{est:A2}\\
C_1 A(t) &\le 
\left[  \sup_{x\in\RR} \,  \sqrt{t}\!\!\!  \int_{|z-x|<\sqrt{4t}}  |V(z)|\,dz\right] \le C_2 A(t) \,, &\qquad d=1; \label{est:A1}
\end{align}
where
\begin{align*}
A(t) = \sup_{x\in\Rd} \int_0^{t}\int_{\Rd}  g(s,x,z)|V(z)|\,dzds\,.
\end{align*}
\end{lem}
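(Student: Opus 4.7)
The plan is to reduce $A(t)$ to an integral of $|V|$ against a time-integrated kernel, identify its size, and handle the tail by a dyadic covering argument.

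By Fubini,
\[A(t) = \sup_{x \in \Rd} \int_{\Rd} |V(z)|\, I(|z-x|,t)\, dz, \qquad I(r,t) := \int_0^t (4\pi s)^{-d/2}\, e^{-r^2/(4s)}\, ds.\]
The substitution $u = r^2/(4s)$ converts $I$ into an upper incomplete gamma function,
\[I(r,t) = \tfrac{1}{4}\pi^{-d/2}\, r^{-(d-2)}\, \Gamma\!\left(\tfrac{d-2}{2},\, \tfrac{r^2}{4t}\right),\]
with $d=2$ read via the exponential integral $E_1 = \Gamma(0,\cdot)$ and $d=1$ via $\Gamma(-1/2,x) = \int_x^\infty u^{-3/2}e^{-u}\,du$. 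Standard asymptotics of $\Gamma(a,\cdot)$ (bounded above and below on $[0,1]$ for $a > 0$; $E_1(y) \asymp \log(1/y)+1$ on $(0,1]$; Gaussian decay at infinity) then yield, writing $\kappa_d(r,t)$ for the relevant kernel from \eqref{est:A3}--\eqref{est:A1} (so $r^{-(d-2)}$, $\log(4t/r^2)$, $\sqrt{t}$ for $d \geq 3$, $d = 2$, $d = 1$),
\[I(r,t) \asymp \kappa_d(r,t) \text{ on } r \leq \sqrt{4t}, \qquad I(r,t) \leq C_d\, (4t)^{-(d-2)/2}\, e^{-r^2/(8t)} \text{ on } r > \sqrt{4t},\]
modulo an additive constant in $d=2$ and polynomial corrections in $r^2/t$.

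The lower bound in \eqref{est:A3}--\eqref{est:A1} is immediate: restrict the integral in $A(t)$ to $\{|z-x| < \sqrt{4t}\}$ and apply the near-range comparison $I \gtrsim \kappa_d$. For the upper bound, fix $x$ and decompose $\Rd = B(x,\sqrt{4t}) \cup \bigcup_{k \geq 1} A_k$ with dyadic annuli $A_k = \{2^{k-1}\sqrt{4t} \leq |z-x| < 2^k\sqrt{4t}\}$. The integral over $B(x,\sqrt{4t})$ is controlled directly by $C \int_{B(x,\sqrt{4t})}|V(z)|\kappa_d(|z-x|,t)\,dz$ via the near-range comparison (the additive constant in $d=2$ is absorbed by recovering $B(x,\sqrt{4t})$ by $O(1)$ balls of radius $\sqrt{4t}/2$ on which $\kappa_2 \geq \log 4$). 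Each $A_k$ is covered by $O(2^{kd})$ balls $B(y_j,\sqrt{4t}/2)$; on each such ball $\kappa_d$ has a positive lower bound, whence
\[\int_{B(y_j,\sqrt{4t}/2)}|V(z)|\,dz \leq C_d\, t^{(d-2)/2}\sup_{y\in\Rd}\int_{B(y,\sqrt{4t})}|V(w)|\kappa_d(|w-y|,t)\,dw.\]
Combining this with the far-range bound on $I$ over $A_k$, the dimension-dependent prefactors cancel and yield $\int_{A_k}|V|\,I\,dz \leq C_d\, 4^k e^{-c \cdot 4^k}\sup_y[\cdot]$, and $\sum_{k \geq 1} 4^k e^{-c\cdot 4^k} < \infty$ finishes the bound.

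The main obstacle is the case $d=2$: since $\kappa_2(r,t)=\log(4t/r^2)$ vanishes at $r = \sqrt{4t}$, one must cover every ball in the argument by strictly smaller subballs (e.g., of radius $\sqrt{4t}/2$) so that $\kappa_2$ retains a positive lower bound. Aside from this careful bookkeeping of dimension-dependent constants, the remainder of the proof is routine.
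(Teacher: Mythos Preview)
Your approach is correct and takes a genuinely different route from the paper's. The paper does not compute the time-integrated kernel $I(r,t)$ directly; instead it invokes \cite[Theorem~1.28]{MR1772266}, which already supplies the two-sided comparison between $A(t)$ and the bracketed spatial integrals, and then patches the two places where that reference falls just short: for $d=2$ it covers $B(x,\sqrt{4t})$ by finitely many balls of radius $\tfrac32\sqrt{t}$ to dominate $\int_{B(x,\sqrt{4t})}|V|$ by the logarithmically weighted integral (essentially your device for absorbing the additive constant), and for $d=1$ it splits $\sqrt{4t}=(\sqrt{4t}-|z-x|)+|z-x|$ to recover the $\sqrt{t}$ weight. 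Your route via the incomplete gamma representation of $I(r,t)$ together with a dyadic tail decomposition is self-contained and more transparent about where each estimate comes from, at the price of more explicit bookkeeping. Two minor remarks: the prefactor $4^k$ in your tail sum is the $d=2$ value---in general the ball count is $O(2^{kd})$---but summability against $e^{-c\cdot 4^k}$ is of course unaffected; and your use of ``lower/upper bound'' refers to bounds on $A(t)$ rather than on the bracketed quantity in \eqref{est:A3}--\eqref{est:A1}, which may momentarily confuse a reader matching your text to the displayed inequalities.
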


\begin{proof}
First note that the heat kernel $p_{0,d}$ defined in  
\cite[page~47]{MR1772266} has a different time scaling than~$g$, i.e., $g(t,x,y)=p_{0,d}(2t,x,y)$ and $\int_0^t\int_{\Rd} g(s,x,z)|V(z)|dzds=\frac12\int_0^{2t}\int_{\Rd} p_{0,d}(s,x,z)|V(z)|dzds$.
The inequalities \eqref{est:A3}
are now deduced from
\cite[Theorem~1.28(a)]{MR1772266}.
The upper bound in \eqref{est:A2}
follows from the lower bound in
\cite[Theorem~1.28(b)]{MR1772266}.
To prove the lower bound in 
\eqref{est:A2} we note that
\begin{align*}
\int_{|z-x|< \sqrt{4t}} |V(z)|dz
&=\int_{|z-x|< \frac32 \sqrt{t}} |V(z)|dz+
\int_{\frac32 \sqrt{t} \leq |z-x|<2 \sqrt{t}} |V(z)|dz\\
&\leq 5 \sup_{x\in\RR^2} \int_{|z-x|<\frac32 \sqrt{t}} |V(z)|dz
\leq \frac{5}{2\log(4/3)}  \sup_{x\in\RR^2} \int_{|z-x|<\frac32 \sqrt{t}} |V(z)| \log \frac{4t}{|z-x|^2} dz\,,
\end{align*}
and apply the upper bound in
\cite[Theorem~1.28(b)]{MR1772266}.
Finally we look at \eqref{est:A1},
and due to \cite[Theorem~1.28(b)]{MR1772266}
it suffices to show the upper bound in \eqref{est:A1}.
To this end we observe that
\begin{align*}
\int_{|z-x|<\sqrt{4t}} \left(\sqrt{4t}-|z-x|\right)|V(z)|dz
+\int_{|z-x|<\sqrt{4t}}|z-x||V(z)|dz &\\
 \geq
\int_{|z-x|<\sqrt{t}} \sqrt{t}\, |V(z)|dz
+\int_{\sqrt{t} \leq |z-x|<\sqrt{4t}}\sqrt{t}|V(z)|dz 
& \geq
\sqrt{t} \int_{|z-x|<\sqrt{4t}}|V(z)|dz\,,
\end{align*}
and use the lower bound in \cite[Theorem~1.28(c)]{MR1772266}.
\end{proof}

Therefore, $V$ belongs to the Kato class if the expressions in the square brackets 
of Lemma~\ref{lem:Katodescr}
converge to 0, see also
\cite[Theorem~4.5]{MR644024},
\cite[Proposition~A.2.6]{MR670130},
\cite[Theorem~3.6]{MR1329992},
\cite[Proposition~4.3]{MR3914946}.
In Section~\ref{sec:new_test}  we establish a counterpart of Lemma~\ref{lem:Katodescr} describing sharp Gaussian estimates \eqref{est:sharp_time}.

At least in high dimensions the latter description of the Kato class may be viewed through the prism of the following property:
for every $d\geq 3$ there exists a constant $c>0$ that depends only on $d$ and such that for all
$t>0$, 
$x,z\in\Rd$ satisfying
$|z-x|\leq \sqrt{4t}$ we have
\begin{align*}
c^{-1} \int_0^{\infty} g(s,x,z)\,ds
\leq \int_0^{t} g(s,x,z)\,ds
\leq \int_0^{\infty} g(s,x,z)\,ds\,.
\end{align*}
In the context of sharp Gaussian estimates
an analogue of that observation is proven in
Proposition~\ref{thm:J_K_new}, more precisely in
\eqref{ineq:t_vs_infty}.

\section{A new test for sharp bounds}\label{sec:new_test}

Each of the tests based on $S$, $N$, $\dJ_*$ or $e_*$
may have 
various
advantages and disadvantages when applying to particular functions $V$. 
The utility of 
the condition based on $S$
has already been exposed in
\cite[Section~1.2]{MR3914946}
for functions $V$ that factorize.
We use this paper as an opportunity
to propose another equivalent test based on a  function $K(t,x,y)$, which originates in $\dJ_*(V,T)$.
More precisely, we will estimate $\dJ_*(V,T)$ by investigating the kernel
$\int_0^T \mJ_{\alpha}(s,x,z)ds$ on a certain crucial region.
In what follows the notation is chosen to be consistent with \cite{MR3914946}.
For $t>0$, $x,y\in\Rd$
we let:\\
\begin{align*}
K(t,x,y)&=
e^{-\frac{|x||y|-\left<x,y\right>}{2}} \dfrac{1}{|x|^{d-2}} \left(1+|x||y|\right)^{d/2-3/2}\ind_{|x|\leq t|y|}\,,&
{\rm if}\quad d\geq 3;\\
&&\\
K(t,x,y)&=
e^{-\frac{|x||y|-\left<x,y\right>}{2}} \log\left( 1+\dfrac{1}{\sqrt{|x||y|}}\right)\ind_{|x|\leq t|y|}\,,&
{\rm if}\quad d= 2;\\
&&\\
K(t,x,y)&=
e^{-\frac{|x||y|-\left<x,y\right>}{2}} 
\sqrt{t}\left(1+t|y|^2\right)^{-1/2}
\ind_{|x|\leq t|y|}\,,&
{\rm if}\quad d=1.\\
\end{align*}
We further define 
\begin{align*}
K(V,t,x,y)=\int_{\Rd} K(t,z-x,y) |V(z)|\,dz\,,
\qquad\qquad
\|K(V,t)\|_{\infty}=\sup_{x,y\in\Rd} K(V,t,x,y)\,.
\end{align*}

\begin{thm}\label{thm:new_equiv}
There are constants $0<C_1<C_2<\infty$ that depend only on $d$ and such that
for all $T>0$ and $V$ we have
$$
C_1  \|K(V,T)\|_{\infty} \leq  \|S(V)\|_{T,\infty} \leq  C_2 \|K(V,T)\|_{\infty}\,.
$$
\end{thm}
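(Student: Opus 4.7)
The plan is to reduce the comparison to a cleaner quantity and then perform a Laplace-type analysis on the integral $\int_0^T p_\alpha(s, x, z)\, ds$. First I would use Proposition~\ref{prop:comp} to show that it suffices to prove $\|K(V,T)\|_\infty \asymp \dJ_*(V,T)$ (with constants depending only on $d$). Indeed, Proposition~\ref{prop:comp} already gives $\|S(V)\|_{T,\infty} \asymp \dJ_*(V, T/2)$, and a short Chapman--Kolmogorov argument using $\int p_\alpha(T, x, u)\, du = 1$ yields $\dJ_*(V, 2T) \leq 2\, \dJ_*(V, T)$, hence $\dJ_*(V, T/2) \asymp \dJ_*(V, T)$.

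The key algebraic step is to set $y = -2\alpha$ and $w = z - x$, and complete the square in the exponent of $p_\alpha(s, x, z) = (4\pi s)^{-d/2} e^{-|z-x+2\alpha s|^2/(4s)}$, producing the factorisation
\begin{equation*}
\int_0^T p_\alpha(s, x, z)\, ds = e^{-(|w||y| - \langle w, y\rangle)/2} \, J(T, |w|, |y|), \quad J(T, a, b) := \int_0^T (4\pi s)^{-d/2} e^{-(a/\sqrt{s} - b\sqrt{s})^2/4}\, ds.
\end{equation*}
The exponential prefactor on the right is exactly the one appearing in $K(T, w, y)$, so the whole problem reduces to comparing $J(T, a, b)$ with the non-exponential part $K^0(T, a, b)$ of $K$, together with control on the complementary region $\{a > Tb\}$ which lies outside the indicator.

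After the substitution $u = \sqrt{s}$ the integrand of $J$ becomes $u^{1-d} e^{-(a/u - bu)^2/4}$, peaked at $u_\star = \sqrt{a/b}$ with Gaussian half-width $\sim 1/b$; the condition $u_\star \leq \sqrt{T}$ is precisely $a \leq Tb$. On this region I would distinguish two subregimes. When $ab \gtrsim 1$, a standard Laplace approximation gives $J \sim a^{(1-d)/2} b^{(d-3)/2}$, matching the large-$ab$ asymptotics of $K^0$ in every dimension. When $ab \lesssim 1$, the Gaussian is broad and the integral is controlled by the polynomial $u^{1-d}$ between the natural cut-offs $u \sim a$ and $u \sim \min(1/b, \sqrt{T})$, yielding $a^{2-d}$, $\log(1+1/\sqrt{ab})$, $\sqrt{T}(1+Tb^2)^{-1/2}$ respectively in $d \geq 3$, $d = 2$, $d = 1$, exactly the three cases of $K^0$. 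This gives $J(T,a,b) \asymp K^0(T,a,b)$ on $\{a \leq Tb\}$. The lower bound $c_1 \|K(V,T)\|_\infty \leq \dJ_*(V,T)$ is then immediate: given $x, y$, take $\alpha = -y/2$ in $\dJ_*$ and restrict the $z$-integral to $\{|z-x| \leq T|y|\}$.

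The main obstacle will be the upper bound, specifically the tail contribution from $\{|z-x| > T|y|\}$, where $K$ vanishes but $J$ does not. Here $u_\star > \sqrt{T}$, so the integrand of $J$ is monotonically increasing on $[0, T]$; combined with the exponential prefactor and a second completing-the-square identity this collapses to the very clean bound
\begin{equation*}
e^{-(|w||y| - \langle w, y\rangle)/2} J(T, |w|, |y|) \leq T (4\pi T)^{-d/2} e^{-|z-x+2\alpha T|^2/(4T)} = T\, p_\alpha(T, x, z).
\end{equation*}
Using $p_\alpha(s, x, z) \geq 2^{-d/2} p_\alpha(T, x, z)$ for $s \in [T, 2T]$, which follows from the explicit Gaussian formula, converts this into $\leq 2^{d/2} \int_T^{2T} p_\alpha(s, x, z)\, ds$, and Chapman--Kolmogorov then reabsorbs it into $\dJ_*(V, 2T) \leq 2\dJ_*(V,T)$. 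The delicate remaining step is to replace this by $\|K(V,T)\|_\infty$ with a constant that allows absorption; the cleanest route seems to exploit the freedom in the supremum defining $\|K(V,T)\|_\infty$ by, for each tail point $z$, comparing with $K^0$ evaluated at the boundary choice $|y'| = |z-x|/T$ (which makes the indicator hold with equality and forces the exponential factor to be $1$), then decomposing the tail dyadically in $|z-x|$ so that finitely many suprema cover it. Pasting these estimates gives $\dJ_*(V,T) \leq c_2 \|K(V,T)\|_\infty$ and closes the proof.
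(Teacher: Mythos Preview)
Your reduction to proving $c_1\|K(V,T)\|_\infty\le \dJ_*(V,T)\le c_2\|K(V,T)\|_\infty$ via Proposition~\ref{prop:comp} and the Chapman--Kolmogorov doubling $\dJ_*(V,2T)\le 2\,\dJ_*(V,T)$ is correct and matches the paper. On $\{|z-x|\le T|y|\}$ your Laplace analysis of $J(T,a,b)$ is a legitimate alternative to the paper's route through the Bessel identity \eqref{eq:rez-ker} (Proposition~\ref{thm:J_K_new}); either establishes \eqref{ineq:J_K_fun}, and the lower bound then follows exactly as you describe (this is \eqref{ineq:r_lower}).

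The upper bound, however, has a genuine gap in the tail $\{a>Tb\}$. Your monotonicity claim is false: the integrand $(4\pi s)^{-d/2}e^{-(a-bs)^2/(4s)}$ is increasing in $s$ precisely when $a^2>2ds+b^2s^2$, and at $s=T$ with $a=Tb+\varepsilon$ this reads $2Tb\varepsilon+\varepsilon^2>2dT$, which fails for small $\varepsilon>0$; for $b=0$ the integrand is eventually decreasing for every $a$, so $\int_0^T \mJ_\alpha(s,x,z)\,ds\le T\,\mJ_\alpha(T,x,z)$ is simply unavailable. The next claim, $\mJ_\alpha(s,x,z)\ge 2^{-d/2}\mJ_\alpha(T,x,z)$ for $s\in[T,2T]$, is also false: with $z-x=Ty$ one has $\mJ_\alpha(T,x,z)=(4\pi T)^{-d/2}$ but $\mJ_\alpha(2T,x,z)=(8\pi T)^{-d/2}e^{-T|y|^2/8}$, which is arbitrarily small for large $|y|$. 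Even granting both steps you would only obtain $\dJ_*(V,T)\le n_2\|K(V,T)\|_\infty+2^{d/2}\dJ_*(V,T)$, which cannot be absorbed; and the dyadic repair you sketch at the end is too vague---you cannot tailor a separate $y'$ to each $z$ without turning one supremum into an infinite sum of suprema.

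The paper handles the tail differently and more robustly. On $A_1=\{|z-x|>t|y|\}$ the elementary inequality $|z-x-ty|\le 2|z-x-sy|$ holds for every $s\in(0,t)$, giving $\mJ_\alpha(s,x,z)\le 2^d g(4s,x+ty,z)$ pointwise; this yields \eqref{ineq:r_upper}, i.e.\ the tail is controlled by the ordinary Kato quantity $\sup_w\int_0^{4T}\!\int g(s,w,z)|V(z)|\,dz\,ds$. The remaining step---and the real content of the upper bound---is to dominate this Kato quantity by $\|K(V,T)\|_\infty$: the paper fixes the single test vector $y=(4T^{-1/2},0,\dots,0)$, observes that on $\{|x|\le\sqrt{16T}\}$ the kernel $K(T,x,y)$ is bounded below by a dimensional constant times $|x|^{2-d}$, $\log(16T/|x|^2)$, or $\sqrt{T}$ (according as $d\ge3$, $d=2$, $d=1$), and then invokes Lemma~\ref{lem:Katodescr}.
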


Before giving the proof of Theorem~\ref{thm:new_equiv}
we provide
consequences, comments and auxiliary results.

\begin{cor}
Let $V\leq 0$.
Then \eqref{est:sharp_time}
holds if and only if $\|K(V,T)\|_{\infty}<\infty$
for some (every) $T>0$.
\end{cor}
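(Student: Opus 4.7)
The plan is to read off the statement directly from Theorem~\ref{thm:new_equiv} combined with part~1) of Lemma~\ref{lem:comb}. Since $V\leq 0$, Lemma~\ref{lem:comb}~1) asserts that \eqref{est:sharp_time} is equivalent, for the given $T>0$, to $\|S(V)\|_{T,\infty}<\infty$. Theorem~\ref{thm:new_equiv} then provides constants $0<C_1<C_2<\infty$ depending only on $d$ such that $C_1\|K(V,T)\|_{\infty}\leq \|S(V)\|_{T,\infty}\leq C_2\|K(V,T)\|_{\infty}$; in particular one quantity is finite iff the other is. Chaining these two equivalences gives the stated biconditional for each fixed $T>0$.

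The only remaining point is the parenthetical \emph{some (every)} quantifier. For this I would argue that $T\mapsto \|S(V)\|_{T,\infty}$ is nondecreasing (as a supremum over a larger set of $t$'s), so finiteness at some $T_0$ implies finiteness at every $T\leq T_0$. For $T>T_0$, the doubling inequality \eqref{ineq:2T-T} from \cite[Corollary~2.3]{MR3914946} yields $\|S(V)\|_{2T_0,\infty}\leq 2\|S(V)\|_{T_0,\infty}<\infty$, and iterating finitely many times shows that $\|S(V)\|_{T,\infty}<\infty$ holds for every $T>0$. Transporting this dichotomy through Theorem~\ref{thm:new_equiv} then shows the analogous dichotomy for $\|K(V,T)\|_{\infty}$, which completes the ''some~$\Leftrightarrow$~every'' part.

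There is no real obstacle here; everything is a bookkeeping assembly of already-cited ingredients. The only thing to double-check while writing the proof is that the dependence of the constants $C_1,C_2$ in Theorem~\ref{thm:new_equiv} on $T$ is trivial (they depend only on $d$), so that the equivalence genuinely holds for each individual $T$ rather than only up to constants that blow up with $T$; this is exactly what Theorem~\ref{thm:new_equiv} is stated to deliver.
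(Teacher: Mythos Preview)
Your proposal is correct and matches the paper's intent: the corollary is stated without proof precisely because it is an immediate consequence of Theorem~\ref{thm:new_equiv} combined with Lemma~\ref{lem:comb}~1), exactly as you write. Your handling of the ``some (every)'' quantifier via monotonicity and the doubling inequality \eqref{ineq:2T-T} is also the intended mechanism.
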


\begin{rem}
If $d\geq 3$, using Proposition~\ref{prop:comp}, Theorem~\ref{thm:new_equiv} and letting  $T\to \infty$ we recover the result of \cite[Theorem~1.4]{MR3914946} that concerns global sharp Gaussian estimates \eqref{est:sharp_uni}.
\end{rem}

We note that the kernels of $S$ and $N$ are given explicitly, but 
they are of rather complex structure that involve three parameters 
$0<t\leq T$, $x,y\in\Rd$
that the supremum is taken of.
Corollary~\ref{cor:reduction}  makes it possible to remove one parameter from $S$ and $N$.
Certain reduction is also made in
$e_*$ and $\dJ_*$, where only two parameters $\alpha,x\in\Rd$ appear.
It is also known and results from a  simple substitution
(see \cite[8.432, formula~6.]{MR3307944}) that for $\lambda>0$
and $x,z,\alpha\in\Rd$,
\begin{align}\label{eq:rez-ker}
\int_0^{\infty}e^{-\lambda s} \mJ_{\alpha}(s,x,z)\,ds=
(2\pi)^{-d/2}
 e^{-\left<z-x,  \alpha\right>} 
\left(\frac{\sqrt{\lambda+|\alpha|^2}}{|z-x|}\right)^{d/2-1}
 K_{d/2-1}\left(|z-x|\sqrt{\lambda+|\alpha|^2}\right),
\end{align}
where $K_{\nu}$ is the modified Bessel function of the second kind.
Thus,
\begin{align*}
e_*(V,\lambda)=
(2\pi)^{-d/2}
\sup_{\alpha,x\in\Rd}
\int_{\Rd} e^{-\left<z-x,  \alpha\right>} 
\left(\frac{\sqrt{\lambda+|\alpha|^2}}{|z-x|}\right)^{d/2-1}
 K_{d/2-1}\left(|z-x|\sqrt{\lambda+|\alpha|^2}\right) |V(z)|\,dz\,.
\end{align*}
It is well known that $K_{d/2-1}$ admits the following estimates  
$K_{d/2-1} \approx z^{1-d/2} e^{-z} (1+z)^{d/2-3/2}$, $d\geq 3$,
$K_0 \approx \ln(1+z^{-1/2})e^{-z}$
(see \cite[formulas 9.6.6, 9.6.8, 9.6.9, 9.7.2]{MR0167642}, \cite[page 11]{MR3914946})
and additionally
$K_{-1/2}(z) =  \sqrt{2/\pi} e^{-z} z^{-1/2}$
(see \cite[formula 10.2.16, 10.2.17]{MR0167642}).
Hence,
\begin{align*}
e_*(V,\lambda)
&\approx
\sup_{\alpha,x\in\Rd}
\int_{\Rd}\frac{e^{-\left<z,\alpha\right>-|z|\sqrt{\lambda+|\alpha|^2}}}{|z|^{d-2}}
\left(1+|z|\sqrt{\lambda+|\alpha|^2}\right)^{d/2-3/2}|V(z+x)|\,dz \,,
& {\rm if}\quad  d\geq 3;\\
&&\\
e_*(V,\lambda)&\approx
\sup_{\alpha,x\in\RR^2} \int_{\RR^2} e^{-\left<z,\alpha\right>-|z|\sqrt{\lambda+|\alpha|^2}} \log\left(1+\left({|z|\sqrt{\lambda+|\alpha|^2}}\right)^{-1/2}\right)
|V(z+x)|\,dz\,,
& {\rm if} \quad d=2;\\
&&\\
e_*(V,\lambda)&=
\sup_{\alpha,x\in\RR} 
\frac{1}{2}
\int_{\RR} e^{-\left<z,\alpha\right>-|z|\sqrt{\lambda+|\alpha|^2}} \left( \sqrt{\lambda+|\alpha|^2}\right)^{-1} |V(z+x)|\,dz\,,
& {\rm if} \quad d=1.\\
\end{align*}
Here $\approx$ means that the ratio of both sides is bounded above and below by positive constants independent of $\lambda$ and $V$.
Actually, the comparability constants in the above depend only on $d$.

The relation between the exponents
of the kernel $K(t,x,y)$ and in the explicit estimates of 
$e_*$
becomes more visible 
when putting
$\alpha=-y/2$
and after noticing that
\begin{align}\label{eq:r_s}
\left<z,\alpha\right>+|z|\sqrt{\lambda+|\alpha|^2}
=\left<z,\alpha\right>+|z||\alpha|+
|z|\frac{\lambda}{\sqrt{\lambda+|\alpha|^2}+|\alpha|}\,.
\end{align}
What is more, 
on its support
$K(t,x,y)$
coincides with the above explicit estimates of $e_*$ with $\lambda=0$ if $d\geq 2$,
and a similar comparability holds with $\lambda=1/t$ if $d=1$.
This is not a coincidence and it becomes clear 
by the next proposition,
which plays a key role in the proof of Theorem~\ref{thm:new_equiv}
and which reveals the origin of the function $K(t,x,y)$.

\begin{prop}\label{thm:J_K_new}
For all $t>0$, 
$\alpha,x,z\in\Rd$ satisfying
$|z-x|\leq 2|\alpha| t$ we have
\begin{align}
\frac12 \int_0^{\infty} \mJ_{\alpha}(s,x,z)\,ds
&\leq \int_0^{t} \mJ_{\alpha}(s,x,z)\,ds
\leq \int_0^{\infty} \mJ_{\alpha}(s,x,z)\,ds\,, \qquad d\geq 2 ;
\label{ineq:t_vs_infty} \\
\frac{e}{e+1}\int_0^{\infty} e^{-s/t} \mJ_{\alpha}(s,x,z)\,ds
&\leq \int_0^{t} \mJ_{\alpha}(s,x,z)\,ds
\leq e \int_0^{\infty} e^{-s/t} \mJ_{\alpha}(s,x,z)\,ds\,,\qquad d=1.
\label{ineq:t_vs_e-infty}
\end{align}
There are constants $0<n_1\leq n_2<\infty$ that depend only on $d$ and such that 
for all $t>0$, 
$\alpha,x,z\in\Rd$ satisfying
$|z-x|\leq 2|\alpha| t$ we have
\begin{align}\label{ineq:J_K_fun}
n_1 K(t,z-x,-2\alpha) \leq 
\int_0^{t} \mJ_{\alpha}(s,x,z)\,ds
\leq  n_2 K(t,z-x,-2\alpha)\,.
\end{align}
\end{prop}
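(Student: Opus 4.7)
The plan is to reduce all three claims to one-variable integral estimates via the substitution $s=ru$ with $r:=|z-x|/(2|\alpha|)$, and then to identify $K(t,z-x,-2\alpha)$ with a Bessel-function expression coming from \eqref{eq:rez-ker}. Writing
\[
\mJ_\alpha(s,x,z)=(4\pi s)^{-d/2}e^{-|z-x|^2/(4s)-\langle z-x,\alpha\rangle-|\alpha|^2 s}
\]
and noting that $|z-x|^2/(4s)+|\alpha|^2 s=a(u+1/u)$ with $a:=|z-x||\alpha|/2$, the substitution yields
\[
\int_0^t\mJ_\alpha(s,x,z)\,ds=(4\pi)^{-d/2}e^{-\langle z-x,\alpha\rangle}r^{1-d/2}\int_0^{t/r}u^{-d/2}e^{-a(u+1/u)}\,du,
\]
with parallel identities for $\int_0^\infty\mJ_\alpha\,ds$ and $\int_0^\infty e^{-s/t}\mJ_\alpha\,ds$; in the latter an extra factor $e^{-\mu u}$, $\mu:=r/t$, appears. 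The hypothesis $|z-x|\le 2|\alpha|t$ is precisely $\mu\le 1$, i.e.\ $t/r\ge 1$. The degenerate cases ($\alpha=0$ forcing $z=x$, or $z=x$ with $|\alpha|>0$) should be treated separately by direct computation.

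For \eqref{ineq:t_vs_infty} with $d\ge 2$, a $u\to 1/u$ symmetry does the job: it rewrites $\int_1^\infty u^{-d/2}e^{-a(u+1/u)}\,du$ as $\int_0^1 v^{d/2-2}e^{-a(v+1/v)}\,dv$, and the pointwise inequality $v^{d/2-2}\le v^{-d/2}$ on $(0,1)$ (valid exactly when $d\ge 2$) dominates this tail by $\int_0^1 u^{-d/2}e^{-a(u+1/u)}\,du$. Hence $\int_0^\infty\le 2\int_0^1\le 2\int_0^{t/r}$, which is the lower bound of \eqref{ineq:t_vs_infty}; the upper bound is trivial. The upper bound of \eqref{ineq:t_vs_e-infty} follows immediately from $e^{-s/t}\ge e^{-1}$ on $(0,t)$.

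The main obstacle is the lower bound of \eqref{ineq:t_vs_e-infty}: the clean symmetry fails in $d=1$ (already for $a=0$ the tail $\int_1^\infty u^{-1/2}\,du$ diverges). My plan is to show $\int_t^\infty e^{-s/t}\mJ_\alpha\,ds\le e^{-1}\int_0^t\mJ_\alpha\,ds$, which together with $\int_0^t e^{-s/t}\mJ\le\int_0^t\mJ$ yields $\int_0^\infty e^{-s/t}\mJ\le((e+1)/e)\int_0^t\mJ$. After the substitution this becomes
\[
\int_{1/\mu}^\infty u^{-1/2}e^{-a(u+1/u)-\mu u}\,du\le e^{-1}\int_0^{1/\mu}u^{-1/2}e^{-a(u+1/u)}\,du.
\]
Applying $u\to 1/u$ followed by $v=\mu w$ on the left, and $u=w/\mu$ on the right, reduces this (after dividing out the common factor $\mu^{-1/2}$) to
\[
\int_0^1 w^{-3/2}e^{-a(\mu w+1/(\mu w))-1/w}\,dw\le e^{-1}\int_0^1 w^{-1/2}e^{-a(w/\mu+\mu/w)}\,dw.
\]
The key algebraic identity $(\mu w+1/(\mu w))-(w/\mu+\mu/w)=(\mu-1/\mu)(w-1/w)\ge 0$ (both factors are nonpositive on $(0,1]$) controls the exponents, and then the elementary bound $\sup_{w\in(0,1]}w^{-1}e^{-1/w}=e^{-1}$ (attained at $w=1$) absorbs the residual density $w^{-1}$.

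Finally, \eqref{ineq:J_K_fun} follows by combining \eqref{ineq:t_vs_infty} (resp.\ \eqref{ineq:t_vs_e-infty}) with the explicit formula \eqref{eq:rez-ker}. For $d\ge 2$ I would take $\lambda=0$ in \eqref{eq:rez-ker} and apply the stated asymptotics $K_{d/2-1}(z)\approx z^{1-d/2}e^{-z}(1+z)^{d/2-3/2}$ (for $d\ge 3$) and $K_0(z)\approx e^{-z}\log(1+z^{-1/2})$ (for $d=2$); these reproduce exactly the polynomial/logarithmic factors appearing in $K(t,z-x,-2\alpha)$ up to dimensional constants. For $d=1$ the plan is to use \eqref{eq:rez-ker} with $\lambda=1/t$ and $K_{-1/2}(z)=\sqrt{\pi/(2z)}e^{-z}$ to evaluate $\int_0^\infty e^{-s/t}\mJ_\alpha\,ds=\frac{1}{2}(1/t+|\alpha|^2)^{-1/2}e^{-\langle z-x,\alpha\rangle-|z-x|\sqrt{1/t+|\alpha|^2}}$; the hypothesis $|z-x|\le 2|\alpha|t$ bounds the phase correction $|z-x|(\sqrt{1/t+|\alpha|^2}-|\alpha|)=|z-x|/(t(\sqrt{1/t+|\alpha|^2}+|\alpha|))\le 1$, so $e^{-|z-x|\sqrt{1/t+|\alpha|^2}}\approx e^{-|z-x||\alpha|}$, while $\sqrt{t/(1+t|\alpha|^2)}$ and $\sqrt{t/(1+4t|\alpha|^2)}$ differ only by a factor in $[1/2,1]$, matching the $d=1$ form of $K$. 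The same hypothesis activates the indicator $\ind_{|z-x|\le 2|\alpha|t}$ in $K$, so the comparison is meaningful on the relevant region.
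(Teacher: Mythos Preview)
Your argument is correct and follows essentially the same route as the paper: the inversion $u\mapsto 1/u$ to compare the truncated and full time integrals, the elementary bound $we^{-w}\le e^{-1}$ to absorb the extra density in $d=1$, and the identification with $K$ via \eqref{eq:rez-ker} together with the standard Bessel asymptotics. The only cosmetic difference is the normalization---you substitute $s=ru$ with $r=|z-x|/(2|\alpha|)$, which makes the exponent $a(u+1/u)$ symmetric under $u\mapsto 1/u$, whereas the paper substitutes $s=tu$ so that the split point is always at $u=1$; this renders your $d\ge2$ step marginally cleaner (only the density comparison $v^{d/2-2}\le v^{-d/2}$ is needed) and your $d=1$ step marginally longer (the $\mu$-substitutions and the identity $(\mu-1/\mu)(w-1/w)$ replace the paper's direct exponent swap), but the substance is identical.
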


\begin{proof}
For simplicity we let $\tilde{x}=z-x$ and $y=-2\alpha$. Then
we have 
\begin{align*}
\int_0^{t} \mJ_{\alpha}(s,x,z)\,ds
= (4\pi t)^{-d/2} \,t \int_0^1 s^{-d/2} e^{-\frac{|\tilde{x}-ts y|^2}{4ts}} \,ds\,.
\end{align*}
Since for $|\tilde{x}|\leq |y| t$ and $s\in (0,1)$, we have
$$\frac{|\tilde{x}|^2}{s} + s |ty|^2\leq \frac{|ty|^2}{s} +s |\tilde{x}|^2\,.$$ 
For $d\geq 2$ we get
\begin{align*}
\int_0^1 s^{-d/2} e^{-\frac{|\tilde{x}-ts y|^2}{4ts}} \,ds
&=e^{\frac{\left<\tilde{x},y\right>}{2}} \int_0^1 s^{-d/2+1} e^{-\left(\frac{|\tilde{x}|^2}{s} +s |ty|^2\right)/(4t)}\, \frac{ds}{s}\\
&\geq e^{\frac{\left<\tilde{x},y\right>}{2}} \int_0^1 s^{d/2-1} e^{-\left(\frac{|ty|^2}{s} +s |\tilde{x}|^2\right)/(4t)} \,\frac{ds}{s}
= \int_1^{\infty} u^{-d/2+1} e^{-\frac{|\tilde{x}- tu y|^2}{4t u}} \,\frac{du}{u}\,.
\end{align*}
Therefore, for $|z-x|\leq 2|\alpha| t$,
\begin{align*}
 \int_0^{\infty} \mJ_{\alpha}(s,x,z)\,ds
\leq 2 \int_0^{t} \mJ_{\alpha}(s,x,z)\,ds\,.
\end{align*}
This proves \eqref{ineq:t_vs_infty}. 
For $d=1$ we have
\begin{align*}
\int_0^1 s^{-1/2} e^{-\frac{|\tilde{x}-ts y|^2}{4ts}} \,ds
&=e^{\frac{\left<\tilde{x},y\right>}{2}} \int_0^1 s^{1/2} e^{-\left(\frac{|\tilde{x}|^2}{s} +s |ty|^2\right)/(4t)}\, \frac{ds}{s}
\geq e^{\frac{\left<\tilde{x},y\right>}{2}} \int_0^1 s^{1/2} e^{-\left(\frac{|ty|^2}{s} +s |\tilde{x}|^2\right)/(4t)} \,\frac{ds}{s}\\
&= \int_1^{\infty} u^{-1/2} e^{-\frac{|\tilde{x}- tu y|^2}{4t u}} \,\frac{du}{u}
\geq e \int_1^{\infty} e^{-u} u^{1/2} e^{-\frac{|\tilde{x}- tu y|^2}{4t u}} \,\frac{du}{u} \,.
\end{align*}
Therefore, for $|z-x|\leq 2|\alpha| t$,
\begin{align*}
 \int_0^{\infty} e^{-s/t} \mJ_{\alpha}(s,x,z)\,ds
\leq (1+1/e) \int_0^{t} \mJ_{\alpha}(s,x,z)\,ds\,.
\end{align*}
This ends the proof of \eqref{ineq:t_vs_e-infty}.
Now, note that we can take $\lambda=0$ in \eqref{eq:rez-ker} by passing with $\lambda>0$ to zero.
Then \eqref{ineq:J_K_fun}
follows from \eqref{eq:rez-ker} and the 
estimates of $K_{\nu}$ mentioned above;
and from \eqref{eq:r_s} for $d=1$.
\end{proof}

\begin{lem}
For all $T>0$ and $V$ we have
\begin{align}
\dJ_*(V,T) &\geq 
\frac{n_1}{2} \|K(V,T)\|_{\infty}+ \frac{1}{2}\sup_{x\in\Rd} \int_0^{T}\int_{\Rd}  g(s,x,z) |V(z)|\,dzds \,, \label{ineq:r_lower} \\
\dJ_*(V,T) & \leq n_2 \|K(V,T)\|_{\infty}+2^{d-2} \sup_{x\in\Rd} \int_0^{4T}\int_{\Rd}  g(s,x,z) |V(z)|\,dzds\,.
\label{ineq:r_upper}
\end{align}
The constants $0<n_1\leq n_2<\infty$ are taken from \eqref{ineq:J_K_fun}.
\end{lem}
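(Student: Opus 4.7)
The lower bound \eqref{ineq:r_lower} I would obtain as the sum of two independent lower bounds on $\dJ_*(V,T)$, each contributing a factor $\tfrac12$. Taking $\alpha=0$ in the defining supremum collapses $\mJ_\alpha$ to $g$ and yields $\dJ_*(V,T)\ge\sup_x\int_0^T\!\!\int g(s,x,z)|V(z)|\,dz\,ds$. For a second bound, I would apply the lower half of \eqref{ineq:J_K_fun} on the set $\{|z-x|\le 2|\alpha|T\}$; since $K(T,z-x,-2\alpha)$ vanishes outside, integrating against $|V|$ gives $\int_0^T\!\!\int\mJ_\alpha(s,x,z)|V(z)|\,dz\,ds\ge n_1 K(V,T,x,-2\alpha)$, and taking the supremum over $\alpha,x\in\Rd$ (noting that $y=-2\alpha$ then ranges over all of $\Rd$) produces $\dJ_*(V,T)\ge n_1\|K(V,T)\|_\infty$. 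Averaging with weight $\tfrac12$ yields \eqref{ineq:r_lower}.

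For the upper bound \eqref{ineq:r_upper} I would fix $\alpha,x$ and split the $z$-integral at $|z-x|=2|\alpha|T$. On $\{|z-x|\le 2|\alpha|T\}$ the upper half of \eqref{ineq:J_K_fun} immediately gives $\int_0^T\mJ_\alpha(s,x,z)\,ds\le n_2 K(T,z-x,-2\alpha)$, so this contribution is bounded by $n_2\|K(V,T)\|_\infty$. On $\{|z-x|>2|\alpha|T\}$ the drift cannot carry the heat centre $x-2\alpha s$ all the way to $z$ within time $T$, and I would bound $\mJ_\alpha$ pointwise by a heat kernel at time $4s$. The central ingredient is the elementary inequality $|z-x+2\alpha s|\ge\tfrac12|z-x|$, valid whenever $2|\alpha|s\le\tfrac12|z-x|$; under that condition $\mJ_\alpha(s,x,z)=(4\pi s)^{-d/2}e^{-|z-x+2\alpha s|^2/(4s)}\le(4\pi s)^{-d/2}e^{-|z-x|^2/(16s)}=2^d g(4s,x,z)$, and the substitution $u=4s$ converts $2^d\int_0^T g(4s,x,z)\,ds$ into $2^{d-2}\int_0^{4T}g(u,x,z)\,du$, matching the constant in the statement.

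The main obstacle is that the hypothesis $2|\alpha|s\le\tfrac12|z-x|$ only holds for all $s\in(0,T)$ when $|z-x|\ge 4|\alpha|T$; on the transitional band $2|\alpha|T<|z-x|<4|\alpha|T$ a direct comparison with $g(\cdot,x,z)$ is impossible in the worst direction, where $\alpha$ points from $z$ towards $x$. I would resolve this by case analysis on the monotonicity of the map $s\mapsto|z-x+2\alpha s|^2$, which is a convex parabola with minimiser $s_\star=\mu/2$, $\mu=-\alpha\cdot(z-x)/|\alpha|^2$. In the two monotone cases ($\mu\le 0$ or $\mu\ge 2T$) one has $|z-x+2\alpha s|\ge|z-x|$ or $|z-x+2\alpha s|\ge|z-x'|$ with $x'=x-2\alpha T$, giving $\mJ_\alpha(s,x,z)\le g(s,x,z)\le 2^d g(4s,x,z)$ or the analogue with $x'$ in place of $x$. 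In the remaining case $0<\mu<2T$ one splits the $s$-integral at $s=\mu/4$: on $(0,\mu/4]$ the elementary computation $|z-x+2\alpha s|^2\ge|z-x|^2-3\mu^2|\alpha|^2/4\ge|z-x|^2/4$ (using $\mu|\alpha|\le|z-x|$ from Cauchy--Schwarz) restores the $\tfrac12|z-x|$ bound, while on $(\mu/4,T]$ a companion estimate anchored at the endpoint $x'$ takes over. The shift $x\mapsto x'$ is absorbed by the $\sup_{x'}$ on the right-hand side of \eqref{ineq:r_upper}. Tracking constants in each subcase keeps the overall factor at $2^{d-2}$, and summing the two regions in $z$ before taking $\sup_{\alpha,x}$ on the left closes the proof.
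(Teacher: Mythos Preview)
Your lower bound argument is exactly the paper's. For the upper bound you and the paper make the same split at $|z-x|=2|\alpha|T=T|y|$ (with $y=-2\alpha$) and both apply \eqref{ineq:J_K_fun} on the inner region; the difference lies entirely in the treatment of the outer region $A_1=\{|z-x|>T|y|\}$.

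The paper's handling of $A_1$ is much simpler than your case analysis: it compares $\mJ_\alpha(s,x,z)=g(s,x+sy,z)$ \emph{directly} to the Gaussian centred at $x'=x+Ty$ (your $x'$), for every $z\in A_1$ and every $s\in(0,T)$, via the single chain
\[
|z-x-Ty|\le |z-x-sy|+(T-s)|y|<|z-x-sy|+|z-x|-s|y|\le 2|z-x-sy|,
\]
which uses only $T|y|<|z-x|$ and the reverse triangle inequality. This gives $g(s,x+sy,z)\le 2^d g(4s,x',z)$ uniformly on $A_1\times(0,T)$, and the substitution $u=4s$ delivers the constant $2^{d-2}$ in one stroke, with the shift to $x'$ absorbed by the outer supremum. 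Your route---first aiming at the centre $x$, hitting an obstruction on the band $2|\alpha|T<|z-x|<4|\alpha|T$, then splitting according to the position of the minimiser $\mu/2$ and patching with $x'$---does work, but the assertion that ``tracking constants in each subcase keeps the overall factor at $2^{d-2}$'' is not justified: in the mixed case $0<\mu<2T$ your two half-integrals are anchored at \emph{different} centres $x$ and $x'$, and after integrating in $z$ (where $\mu$ varies with $z$) and passing to $\sup_w$ you inevitably collect $2^{d-2}+2^{d-2}=2^{d-1}$. That weaker constant is still enough for Theorem~\ref{thm:new_equiv}, but it does not prove the lemma as stated. The moral is that the ``right'' reference point on $A_1$ is $x'$ throughout, not $x$.
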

\begin{proof}
Recall that
$\mJ_{\alpha}(t,x,z)= g(t,x-2\alpha t,z)$.
If we put $\alpha=0$, we get
that $\dJ_*(V,T)$ is bounded below by  $\sup_{x\in\Rd} \int_0^{T}\int_{\Rd}  g(s,x,z) |V(z)|\,dzds $,
while by
reducing the domain of  integration in space variable  $z$ to $|z-x|\leq 2|\alpha|t$ and by \eqref{ineq:J_K_fun}
we have
$\dJ_*(V,T) \geq n_1 \|K(V,T)\|_{\infty}$.
That proves the lower bound
\eqref{ineq:r_lower}.
Now, let $y=-2\alpha$.
For the upper bound we consider two regions of integration,
\begin{align*}
A_1&=\{z\in\Rd \colon |z-x|> t|y|\}\,,\\
A_2&=\{z\in\Rd \colon |z-x|\leq t|y|\} \,.
\end{align*}
Note that if $z\in A_1$ and $s\in (0,t)$, then
\begin{align*}
|z-x-ty|&\leq |z-x-s y|+(t-s)|y| \\
&< |z-x-s y|+|z-x|-|s y|\leq 2|z-x-s y|\,.
\end{align*}
By the monotonicity of the exponential function we get
\begin{align*}
\int_0^t\int_{A_1}  g(s,x+sy,z)|V(z)|\,dz ds
&\leq 
2^d\int_0^t\int_{\Rd}  g(4s,x+ty,z)|V(z)|\,dz ds\\
&=2^{d-2}\int_0^{4t}\int_{\Rd}  g(u,x+ty,z)|V(z)|\,dz du\,.
\end{align*}
On the set $A_2$ we apply \eqref{ineq:J_K_fun}. This ends the proof of \eqref{ineq:r_upper}.
\end{proof}

We are now ready to justify Theorem~\ref{thm:new_equiv}.

\begin{proof}[Proof of Theorem~\ref{thm:new_equiv}]
We will actually prove that
$$
c_1  \|K(V,T)\|_{\infty} \leq  \dJ_*(V,T) \leq  c_2 \|K(V,T)\|_{\infty}\,,
$$
for all $T>0$ with constants $0<c_1< c_2<\infty$
that depend only on $d$. The result will then follow from Proposition~\ref{prop:comp} and \eqref{ineq:2T-T}.
The lower bound holds by \eqref{ineq:r_lower}.
We focus on the upper bound
and due to  \eqref{ineq:r_upper}
it suffices to show that
$$
\sup_{x\in\Rd} \int_0^{4T}\int_{\Rd}  g(s,x,z) |V(z)|\,dzds
\leq c\, \|K(V,T)\|_{\infty}\,.
$$
For the whole proof we
let $y=(4 t^{-1/2},0,\ldots,0)\in\Rd$. Then
for $d\geq 3$,
since $-\left<x,y\right>\leq |x||y|$, we have
\begin{align*}
K(t,x,y) \geq e^{-4|x|  t^{-1/2}} \frac{1}{|x|^{d-2}}
\ind_{|x|\leq \sqrt{16t}}
\geq e^{-16} \frac{1}{|x|^{d-2}}\ind_{|x|\leq \sqrt{16t}}\,.
\end{align*}
Therefore, by 
\eqref{est:A3}
(cf. \cite[(4.3)]{MR3200161})
there is a constant $c>0$ that depends only on $d$ such that
\begin{align*}
\|K(V,T)\|_{\infty}
\geq e^{-16} \sup_{x\in\Rd} \int_{|z-x|\leq \sqrt{16T}}
 \frac{|V(z)|}{|z-x|^{d-2}}\,dz
\geq 
c \sup_{x\in\Rd} \int_0^{4T}\int_{\Rd}  g(s,x,z) |V(z)|\,dzds\,.
\end{align*}
For $d=2$ we
first note that 
$\log(1+r/2)\geq (1/3)\log (r)$
if $r\geq 1$.
Therefore, 
\begin{align*}
K(t,x,y)&\geq
e^{-16}
\log\left( 1+\frac1{2}\left(\frac{16 t}{|x|^2}\right)^{1/4}\right)\ind_{|x|\leq \sqrt{16t}}
\geq (e^{-16}/3) \log\left(\frac{16 t}{|x|^2} \right)\ind_{|x|\leq \sqrt{16t}}\,.
\end{align*}
Finally, by 
\eqref{est:A2}
there is an absolute constant $c>0$ such that
\begin{align*}
\|K(V,T)\|_{\infty}
&\geq (e^{-16}/3)
\sup_{x\in\RR^2}  \int_{|z-x|\leq \sqrt{16T}}
\log\left(\frac{16T}{|z-x|^2}\right)|V(z)|\,dz
\geq c \sup_{x\in\RR^2} \int_0^{4T}\int_{\RR^2}  g(s,x,z) |V(z)|\,dzds\,.
\end{align*}
For $d=1$ we have
\begin{align*}
K(t,x,y)
\geq \frac{e^{-16}}{\sqrt{17}} \sqrt{t} \,\ind_{|x|\leq \sqrt{16t}}\,,
\end{align*}
and by \eqref{est:A1}
there is an absolute constant $c>0$ such that
\begin{align*}
\|K(V,T)\|_{\infty}
&\geq \frac{e^{-16}}{\sqrt{17}} \sup_{x\in\RR} 
\,  \sqrt{T}\!\!\!  \int_{|z-x|<\sqrt{16T}}  |V(z)|\,dz 
\geq c \sup_{x\in\RR} \int_0^{4T}\int_{\RR}  g(s,x,z) |V(z)|\,dzds\,.
\end{align*}
\end{proof}

\section{Proofs of Theorems \ref{thm:dgeq4} -- \ref{thm:d2d1}}\label{sec:proofs}

\subsection{Proof of Theorem~\ref{thm:dgeq4}}

In the proof we construct a function $V$ with the desired properties. The construction is based on another function defined in \cite[Proposition~1.6]{MR3914946}, and uses truncations and dilatations.

\begin{proof}
For $s>0$ we let $\tau_s f(x)=sf(\sqrt{s}x)$. Note that such dilatation does not change the norm
$$
\| \Delta^{-1}(\tau_s f)  \|_{\infty}=\| \Delta^{-1} f \|_{\infty}\,.
$$
Moreover, ${\rm supp} (\tau_s f) \subseteq B(0,r/\sqrt{s})$ if ${\rm supp} (f)\subseteq B(0,r)$, $r>0$, and for $t>0$,
$$
\|S(\tau_s f,t)\|_{\infty}=\|S(f,st)\|_{\infty}\,.
$$
Now, let $U\colon \Rd\to \RR$ be non-positive and such  that
$$
\|U\|_\infty\leq 1\,,\quad \|\Delta^{-1}U\|_{\infty}=C<\infty\,,\quad \sup_{t>0}\|S(U,t)\|_{\infty}=\infty\,.
$$
Such $U$ exists by \cite[Proposition~1.6 and Theorem~1.4]{MR3914946}. 
By the definition of the supremum norm and the monotone convergence theorem, 
for $n\in\NN$ there are $t_n, r_n>0$ such that
$
\|S(U\ind_{B_{r_n}},t_n)\|_{\infty}> (4m_2/m_1)\, 4^n 
$.
For simplicity we define $U_n=U\ind_{B_{r_n}}$, so we have
$$
\|S(U_n,t_n)\|_{\infty}> (4m_2/m_1)\, 4^n \,.
$$
Let $s_n=\max\{r_n^2, n\,t_n\}$
and define 
$$
V_n=\tau_{s_n} (U_n)\,.
$$
Then ${\rm supp} (V_n)\subseteq B(0,1)$, $V_n\in L^{\infty}(\Rd)$,  $\|\Delta^{-1} V_n\|_{\infty}\leq C$ and
by Corollary~\ref{cor:reduction}
\begin{align*}
\|S(V_n,1/n)\|_{\infty}
&=\| S(U_n,s_n/n)\|_{\infty}\\
&\geq \left(\frac{m_1}{4m_2}\right)  \|S(U_n)\|_{s_n/n,\infty}\\
&\geq \left(\frac{m_1}{4m_2}\right)  \|S(U_n)\|_{t_n,\infty}
\geq \left(\frac{m_1}{4m_2}\right) \|S(U_n,t_n)\|_{\infty}> 4^n\,.
\end{align*}
Finally, let 
$$V:=\sum_{n=1}^{\infty} V_n/2^n\,.$$ Obviously, part (a) holds. Further,
again by Corollary~\ref{cor:reduction}, for $t>0$ we get
\begin{align*}
\|S(V,t)\|_{\infty}
&\geq \left(\frac{m_1}{4m_2}\right)
\lim_{n\to\infty}\|S(V,1/n)\|_{\infty}
\geq \left(\frac{m_1}{4m_2}\right)
\lim_{n\to\infty}
2^{-n} \|S(V_n,1/n)\|_{\infty}
=\infty\,.
\end{align*}
This proves part (d). 
The statement (c) holds by
$$
\|\Delta^{-1} V\|_{\infty} \leq \sum_{n=1}^{\infty} \|\Delta^{-1}V_n\|_{\infty}/2^n\leq C<\infty\,.
$$
Next,
\begin{align*}
\sup_{x\in\Rd}&\int_0^t \int_{\Rd} 
 g(s,x,z)|V(z)|\,dzds\\
&\leq 
\sum_{n=1}^N  
\sup_{x\in\Rd}\int_0^t \int_{\Rd} 
 g(s,x,z)\frac{|V_n(z)|}{2^n}\,dzds
+\sum_{n=N+1}^{\infty} \sup_{x\in\Rd}\int_0^{\infty} \int_{\Rd} g(s,x,z)\frac{|V_n(z)|}{2^n}\,dzds\\
&\leq \sum_{n=1}^N  t \|V_n\|_{\infty}
+\sum_{n=N+1}^{\infty} \|\Delta^{-1}V_n\|_{\infty}/2^n\\
&\leq
 t \sum_{n=1}^N  \|V_n\|_{\infty}
+\frac{C}{2^N} \,,
\end{align*}
which can be made arbitrary small by the choice of $N$ and $t$,
and proves part (b).

\end{proof}

\subsection{Proof of Theorem~\ref{thm:d3}}

Similarly to the proof of Theorem~\ref{thm:dgeq4} we construct a function $V$ with the desired features. 
We will choose a decreasing function $f\geq 0$ satisfying $\int_0^{1/25}f(r)dr=\infty$.
The function $V$ will be given by a series based on certain functions $V_n$. Each $V_n$ will be supported on a union of properly chosen cylinders $C_{k,r}$ and will have values according to the function $f$.
In particular, the choice will be such that on the support of $V_n$, the function
$K(t,x,y)$ with $|y|=25n$
will be comparable to
$
1/|x|
$ 
and 
such that for a sequence $n_i\in\NN$
diverging to infinity
 we will have
$$
\|K(V_{n_i},1)\|_{\infty} \geq c \int_{1/(25n_i)}^{1/25}f(r)dr\geq  4^i \,.
$$

In the first lemma we investigate a function $U_r$ that is supported on a cylinder $C_r\subset \RR^3$ and takes values related to the size of the cylinder.
To simplify the notation, for
$z=(z_1, z_2,z_3)\in\RR^3$ we write $z=(z_1,\mathbf z_2)$, 
where $\mathbf z_2=(z_2,z_3)\in \RR^2$.

\begin{lem}\label{lem:kato}
For $r>0$ we define
\begin{align*}
C_r = \left[0,\frac14 \right] \times D_r,
\end{align*}
where $D_r$ is a 2-dimensional ball of radius $r$ centred at $0$.
For $r\in(0,e^{-1})$, $z\in\RR^3$ put
$$
\varrho(r)=\frac{1}{r^2 \,|\ln r| \, \ln | \ln r|} \qquad {and}\qquad U_r(z) = \varrho(r) \ind_{C_r}(z)\,. 
$$
Then
\begin{align*}
\lim_{\varepsilon \to 0^+}  \sup_{
\substack{x\in\RR^3\\ r\in (0,1/5)}} \quad\int_{|z-x| < \varepsilon} \frac{1}{|z-x|} | U_r(z)| dz
=0\,.
\end{align*}
\end{lem}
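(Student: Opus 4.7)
The plan is to reduce the statement to a uniform-in-$x$ upper bound on the geometric integral $I(x,r,\varepsilon):=\int_{B(x,\varepsilon)\cap C_r}|z-x|^{-1}\,dz$, couple it with the factor $\varrho(r)$, and split the supremum into the regimes $r\geq\varepsilon$ and $r<\varepsilon$. The double logarithm built into $\varrho(r)$ is exactly what absorbs the $\ln(\varepsilon/r)$ that will appear in the second regime.

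Writing $z=(z_1,\mathbf z_2)$, $x=(x_1,\mathbf x_2)$, setting $a=z_1-x_1$, and applying Fubini leads to the inner two-dimensional integral $\int_{D_r}(a^2+|\mathbf z_2-\mathbf x_2|^2)^{-1/2}\,d\mathbf z_2$. Its integrand is a radially symmetric decreasing function of $\mathbf z_2-\mathbf x_2$, so the Hardy--Littlewood rearrangement inequality (equivalently: among all translates of $D_r$ the integral of a fixed radial decreasing kernel is largest when $D_r$ is centred at the origin) furnishes the $\mathbf x_2$-free bound $2\pi(\sqrt{a^2+r^2}-|a|)$. Integrating in $a\in[-\varepsilon,\varepsilon]$ via the identity $\sqrt{a^2+r^2}-|a|=r^2/(\sqrt{a^2+r^2}+|a|)$, split at $|a|=r$, yields
\[
I(x,r,\varepsilon)\leq C_1 r^2\ \text{ if }r\geq\varepsilon,\qquad I(x,r,\varepsilon)\leq C_1 r^2\bigl(1+\ln(\varepsilon/r)\bigr)\ \text{ if }r<\varepsilon,
\]
alongside the universal bound $I(x,r,\varepsilon)\leq 2\pi\varepsilon^2$.

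Multiplying by $\varrho(r)=1/(r^2|\ln r|\ln|\ln r|)$ and taking the supremum then finishes the proof. For $r\in[\varepsilon,1/5]$ I would use the bound $2\pi\varepsilon^2$: an auxiliary split at $r=\sqrt\varepsilon$ shows $\varepsilon^2/(r^2|\ln r|\ln|\ln r|)$ is $O(1/(|\ln\varepsilon|\ln|\ln\varepsilon|))$ on $[\varepsilon,\sqrt\varepsilon]$ (since there $|\ln r|\geq|\ln\varepsilon|/2$) and $O(\varepsilon)$ on $[\sqrt\varepsilon,1/5]$ (since there $r^2\geq\varepsilon$ and $|\ln r|\geq\ln 5$). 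For $r\in(0,\varepsilon)$ I would use the cylindrical bound and substitute $u=|\ln r|>|\ln\varepsilon|$; then $\ln(\varepsilon/r)=u-|\ln\varepsilon|\leq u$ and
\[
\varrho(r)\,C_1 r^2\bigl(1+\ln(\varepsilon/r)\bigr)\leq \frac{C_1(1+u)}{u\ln u}\leq\frac{C_2}{\ln|\ln\varepsilon|}\longrightarrow 0
\]
as $\varepsilon\to 0^+$, using $\ln u\geq\ln|\ln\varepsilon|$. The only non-routine step is the rearrangement producing the $\mathbf x_2$-free bound on the inner two-dimensional integral; without it one would have to case-split on the position of $\mathbf x_2$ relative to $D_r$ (inside, in the annulus $r<|\mathbf x_2|\leq 2r$, and far away), which still works but is considerably messier.
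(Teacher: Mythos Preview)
Your proof is correct and follows essentially the same strategy as the paper: split into the regimes $r\ge\varepsilon$ and $r<\varepsilon$, use symmetric rearrangement to eliminate the $\mathbf x_2$-dependence, and obtain bounds of the form $Cr^2$ and $Cr^2(1+\ln(\varepsilon/r))$ for the geometric integral, which after multiplication by $\varrho(r)$ give quantities tending to $0$. The only notable differences are stylistic: the paper bounds the integral in the regime $r\le\varepsilon$ via a geometric covering $B(0,\varepsilon)\cap(C_r\cup(-C_r))\subseteq B(0,\sqrt2 r)\cup([r,\varepsilon]\times D_r)\cup([-\varepsilon,-r]\times D_r)$ rather than your explicit inner integration, and it handles the supremum over $r$ by observing the monotonicity of $\varrho(r)$, $r^2\varrho(r)$ and $r^2|\ln r|\,\varrho(r)$ on $(0,1/5)$ (so the worst case is $r=\varepsilon$), which is a bit cleaner than your auxiliary split at $\sqrt\varepsilon$.
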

\begin{proof}
Note that $\varrho(r)$ is decreasing on $(0,1/5)$.
On the other hand, $r^2 \varrho(r)$ and $r^2 |\ln r| \varrho(r)$ are increasing on $(0,1/5)$.
Let $0<\varepsilon<1/5$ and
\begin{align*}
{\rm I}_r(\varepsilon):=\sup_{x\in\RR^3}\int_{|z-x| < \varepsilon} \frac{1}{|z-x|} | U_r(z)| dz
=\varrho(r)  \sup_{x\in\RR^3} \int_{|z|< \varepsilon} \frac{1}{|z|}\ind_{C_r}(z+x) dz\,.
\end{align*}
If $\varepsilon \le r$, then
$$
{\rm I}_r(\varepsilon)
\le \varrho(r)
\int_{|z|< \varepsilon} \frac{1}{|z|}\,dz
\le 2\pi  \varepsilon^2 \varrho(\varepsilon)\,.
$$
If $r \le \varepsilon$, 
we use the symmetric rearrangement inequality \cite[Chapter~3]{MR1817225} and that $\varepsilon<1/5$ to get
\begin{align*}
\int_{|z|< \varepsilon} \frac{1}{|z|}\ind_{C_r} (z+x) dz
&=\int_{-x_1}^{1/4-x_1} dz_1
\int_{\RR^2} \frac{\ind_{|z|<\varepsilon}}{|z|} \, \ind_{D_r}(\mathbf z_2 + \mathbf x_2) d \mathbf z_2 
\leq \int_{-x_1}^{1/4-x_1} dz_1 
\int_{\RR^2} \frac{\ind_{|z|<\varepsilon}}{|z|}\, \ind_{D_r}(\mathbf z_2) d \mathbf z_2\\
&\leq \int_{-1/4}^{1/4} dz_1\int_{\RR^2} \frac{\ind_{|z|<\varepsilon}}{|z|}\, \ind_{D_r}(\mathbf z_2) d \mathbf z_2
=\int_{|z|<\varepsilon} \frac{1}{|z|}\, \ind_{C_r\cup (-C_r)}(z)dz\,.
\end{align*}
Now note that
$$B(0,\varepsilon)\cap (C_r\cup (-C_r))\subseteq B(0,\sqrt{2}r)\cup ( [r,\varepsilon]\times D_r)
\cup ( [-\varepsilon,-r]\times D_r).$$
Then
\begin{align*}
{\rm I}_r(\varepsilon)
\leq 
\varrho(r) \left(\int_{|z|\leq \sqrt{2}r} \frac1{|z|}\,dz + 2 \int_r^\varepsilon \frac{|D_r|}{z_1}\, dz_1  \right)
&\le \varrho(r) \Big( 4\pi r^2 +  2\pi r^2 |\ln r|\Big)\\
&\le \varrho(\varepsilon) \Big( 4\pi \varepsilon^2 +  2\pi \varepsilon^2 |\ln \varepsilon|\Big).
\end{align*}
Thus
\begin{align*}
\lim_{\varepsilon \to 0^+}  \sup_{
\substack{x\in\RR^3\\ r\in (0,1/5)}} \quad\int_{|z-x| < \varepsilon} \frac{1}{|z-x|} | U_r(z)| dz=
\lim_{\varepsilon \to 0^+}  \sup_{r\in (0,1/5)} {\rm I}_r(\varepsilon)
=0\,.
\end{align*}
\end{proof}

\begin{cor}\label{cor:kato}
For $k\in\NN$ and $r>0$ we
define
\begin{align*}
C_{k,r} = \left[k,k+\frac14 \right] \times D_r,
\end{align*}
where $D_r$ is a 2-dimensional ball of radius $r$ centred at $0$.
For $r\in(0,e^{-1})$, $n\in\NN$ and $z\in\RR^3$ put
$$
f(r)=\frac{1}{r \,|\ln r| \, \ln | \ln r|}
\qquad {and}\qquad
V_{n}(z)= f\left(\frac{z_1}{25n}\right) \sum_{k=1}^{n} \ind_{C_{k,\sqrt{k/(25n)}}} (z)\,.
$$
Then
\begin{align*}
\lim_{\varepsilon \to 0^+}  
\sup_{x\in\RR^3,\, n\in \NN}
\quad 
\int_{|z-x| < \varepsilon} \frac{1}{|z-x|} | V_{n}(z)| dz=0\,.
\end{align*}
\end{cor}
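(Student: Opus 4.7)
My plan is to reduce the corollary to Lemma~\ref{lem:kato} by exploiting two structural features of $V_n$: the spatial separation of the cylinders $C_{k,r_k}$ (where $r_k=\sqrt{k/(25n)}$) and a pointwise domination $|V_n|\ind_{C_{k,r_k}} \leq \tfrac12 U_{r_k}(\cdot - (k,0,0))$.

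First, I would observe that the $z_1$-projections $[k,k+1/4]$ of the cylinders $C_{k,r_k}$ are separated by gaps of length $3/4$, so for any $\varepsilon<3/8$ the ball $B(x,\varepsilon)$ meets at most one cylinder $C_{k,r_k}$. Consequently, for such $\varepsilon$,
\begin{align*}
\int_{|z-x|<\varepsilon}\frac{|V_n(z)|}{|z-x|}\,dz
\leq \sup_{1\leq k\leq n}\int_{|z-x|<\varepsilon}\frac{|V_n(z)|\ind_{C_{k,r_k}}(z)}{|z-x|}\,dz\,,
\end{align*}
which eliminates the sum over $k$.

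Next, on $C_{k,r_k}$ one has $z_1/(25n)\in[r_k^2, r_k^2+1/(100n)]$, and since $f$ is decreasing on $(0,e^{-1})$ and $r_k\leq 1/5<e^{-1/2}$, the value of $V_n$ on $C_{k,r_k}$ is bounded by $f(r_k^2)$. A short calculation gives
\begin{align*}
f(r_k^2)=\frac{1}{r_k^{2}\cdot 2|\ln r_k|\cdot \ln(2|\ln r_k|)}
=\frac{1}{2}\varrho(r_k)\cdot\frac{\ln|\ln r_k|}{\ln|\ln r_k|+\ln 2}\leq \frac{1}{2}\varrho(r_k)\,,
\end{align*}
the last inequality holding since $\ln|\ln r_k|>0$ for $r_k<1/e$. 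Therefore $|V_n(z)|\ind_{C_{k,r_k}}(z)\leq \tfrac12\varrho(r_k)\ind_{C_{k,r_k}}(z)$, which after translation by $(k,0,0)$ becomes $\tfrac12 U_{r_k}(z-(k,0,0))$.

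Finally, substituting $y=z-(k,0,0)$ and $\tilde x=x-(k,0,0)$ and applying Lemma~\ref{lem:kato} yields
\begin{align*}
\sup_{x\in\RR^3,\,n\in\NN}\int_{|z-x|<\varepsilon}\frac{|V_n(z)|}{|z-x|}\,dz
\leq \frac{1}{2}\sup_{\substack{\tilde x\in\RR^3 \\ r\in(0,1/5]}}\int_{|y-\tilde x|<\varepsilon}\frac{U_r(y)}{|y-\tilde x|}\,dy \xrightarrow[\varepsilon\to 0^+]{} 0\,,
\end{align*}
which proves the corollary (the boundary case $r_k=1/5$, occurring only when $k=n$, is covered since the proof of Lemma~\ref{lem:kato} remains valid on the closed interval $(0,1/5]$, as $r^2\varrho(r)$ and $r^2|\ln r|\varrho(r)$ remain monotone on $(0,1/e)$). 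I expect the only subtle step to be the pointwise bound $f(r_k^2)\leq \tfrac12\varrho(r_k)$, as it hinges on the exact matching between the radius $r_k$ of the $k$-th cylinder and the longitudinal argument $k/(25n)=r_k^2$ of $f$ — this is precisely why the construction uses $\sqrt{k/(25n)}$; everything else is either cylinder separation or direct translation.
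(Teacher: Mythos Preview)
Your proof is correct and follows the same strategy as the paper: exploit the $3/4$-separation of the cylinders so that a small ball meets at most one of them, bound $V_n$ on $C_{k,r_k}$ by (a constant times) $U_{r_k}(\cdot-(k,\mathbf 0))$ via $f(z_1/(25n))\leq f(r_k^2)\leq\varrho(r_k)$, and invoke Lemma~\ref{lem:kato}. One small inaccuracy: $f$ is \emph{not} decreasing on all of $(0,e^{-1})$ --- it blows up near both endpoints --- but it is decreasing on, say, $(0,e^{-3})$ (as the paper notes), which comfortably contains the range $z_1/(25n)\in[1/(25n),\,1/25+1/(100n)]\subset(0,1/20]$ that your argument actually needs.
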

\begin{proof}
We use
$\varrho(r)$ and $U_r$
as defined in
Lemma~\ref{lem:kato}.
Note that $f(r)$ is decreasing on 
$(0,e^{-3})$ and $f(r^2)\leq \varrho(r)$ on $(0,e^{-1})$.
We record that  every two cylinders $C_{k,\sqrt{k/(25n)}}$
that correspond to different values of $k\in\NN$
are disjoint. Therefore
if $z\in C_{k,\sqrt{k/(25n)}}$
we have
$$
V_{n}(z)
=f\left(\frac{z_1}{25n}\right)
\leq
f\left(\frac{k}{25n}\right)
\leq \varrho
\left(\sqrt{\frac{k}{25n}}\right)
= U_{\sqrt{k/(25n)}} (z-(k,\mathbf 0)).
$$
What is more, the distance between
every two cylinders $C_{k,\sqrt{k/(25n)}}$
that correspond to different $k$
is at least $3/4$.
Thus, 
for any $x\in\Rd$ and $0<\varepsilon<3/8$,
the intersection of $B(x,\varepsilon)$
and ${\rm supp}(V_n)$ is a subset of at most one
cylinder $C_{k,\sqrt{k/(25n)}}$,
and so by Lemma~\ref{lem:kato},
\begin{align*}
\lim_{\varepsilon \to 0^+}  
\sup_{x\in\RR^3,\, n\in \NN}
\,
\int_{|z-x| < \varepsilon} \frac{1}{|z-x|} | V_{n}(z)| dz
&\leq \lim_{\varepsilon \to 0^+} 
\sup_{\substack{x\in\RR^3,\,n\in \NN \\k =1,\ldots,n}}
\,
\int_{|z-x| < \varepsilon} \frac{1}{|z-x|}
 U_{\sqrt{k/(25n)}} (z-(k,\mathbf 0))dz\\
& \leq \lim_{\varepsilon \to 0^+} 
\sup_{\substack{x\in\RR^3\\ r\in (0,1/5)}} \quad\int_{|z-x| < \varepsilon} \frac{1}{|z-x|} | U_r(z)| dz
=0\,.
\end{align*}
\end{proof}

\begin{lem}\label{lem:nosharp}
Let $V_n$ be defined as in Corollary~\ref{cor:kato}.
There are $n_i\in\NN$, $i\in\NN$, such that
for every $i\in\NN$,
$$
\|K(V_{n_i},1)\|_{\infty}\geq  4^i \,.
$$
\end{lem}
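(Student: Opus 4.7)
The plan is to pick, for each $n$, a specific pair $x, y \in \RR^3$ making $K(V_n,1,x,y)$ large, and then to extract a subsequence $n_i$ along which this value exceeds $4^i$. Given the geometry of $\supp(V_n)$, a union of thin cylinders $C_{k,\sqrt{k/(25n)}}$ lined up along the first coordinate axis, the natural choice is $x=0$ and $y=(25n,0,0)$, so that $y$ is aligned with the cylinders' common axis and $|y|=25n$ matches the scale of their union. Recall that for $d=3$,
\begin{equation*}
K(1,z,y)=e^{-(|z||y|-\langle z,y\rangle)/2}\,\frac{1}{|z|}\,\ind_{|z|\le|y|},
\end{equation*}
so the main task is to show that the exponential factor is bounded below by an absolute constant on $\supp(V_n)$.

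For $z\in C_{k,\sqrt{k/(25n)}}$, writing $z=(z_1,\mathbf{z}_2)$, one has $z_1\in[k,k+1/4]$ and $|\mathbf{z}_2|^2\le k/(25n)$, so $|z|\le n+1/2\le 25n$ secures the indicator. Using $|z|-z_1=|\mathbf{z}_2|^2/(|z|+z_1)$ together with $|z|+z_1\ge 2k$, I get $|z|-z_1\le 1/(50n)$, hence $|z||y|-\langle z,y\rangle=25n(|z|-z_1)\le 1/2$. The radii $\sqrt{k/(25n)}$ are calibrated precisely so that this bound is uniform in $k$; this is the key geometric observation. Since also $|z|\le 3k/2$, I have $1/|z|\ge 2/(3k)$. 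Integrating these bounds against $V_n$ and applying Fubini with the substitution $u=z_1/(25n)$ on each cylinder, the factor $1/k$ cancels the $2$-dimensional cross-sectional area $\pi k/(25n)$ to yield
\begin{equation*}
K(V_n,1,0,y)\ge\frac{2\pi e^{-1/4}}{3}\sum_{k=1}^n\int_{k/(25n)}^{(k+1/4)/(25n)}f(u)\,du.
\end{equation*}

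A short derivative computation shows $f$ is decreasing on $(0,1/10)$, which contains all the intervals appearing above for every $n\ge 1$. The decreasing property gives $\int_{k/(25n)}^{(k+1/4)/(25n)}f\ge\tfrac14\int_{k/(25n)}^{(k+1)/(25n)}f$, and summing telescopes to $\tfrac14\int_{1/(25n)}^{(n+1)/(25n)}f\ge\tfrac14\int_{1/(25n)}^{1/25}f(u)\,du$. Since $-\ln\ln|\ln u|$ is an antiderivative of $f$ on $(0,1/e)$, this last integral equals $\ln\ln\ln(25n)-\ln\ln\ln 25$, which diverges as $n\to\infty$. Hence $\|K(V_n,1)\|_\infty\to\infty$, and $n_i$ may be chosen inductively so that the lower bound exceeds $4^i$. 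The only subtle point is the geometric calibration of the cylinder radii in the second paragraph; after that, the argument reduces to a triple-logarithmic Riemann-sum estimate for a divergent integral at $0$, and the resulting $n_i$ will grow triply exponentially in $i$.
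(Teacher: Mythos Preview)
Your proof is correct and follows essentially the same approach as the paper: choose $x=0$, $y=(25n,0,0)$, verify that the exponential factor in $K(1,z,y)$ is uniformly bounded below on $\supp(V_n)$ (the paper frames this via a parabolic region $E_n$, you compute $|z|-z_1=|\mathbf z_2|^2/(|z|+z_1)$ directly), bound $1/|z|$ by a constant times $1/k$, and reduce to the divergent integral $\int_{1/(25n)}^{1/25}f(u)\,du$. The constants differ slightly (your $e^{-1/4}$ and $2/(3k)$ versus the paper's $e^{-1/2}$ and $1/(k+1)$), but the argument is the same.
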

\begin{proof}
Let $\theta>0$. Then
\begin{align*}
\theta(|z| - z_1) <1 \qquad \iff  \qquad z_1 > \frac{\theta}{2} |\mathbf z_2|^2 - \frac{1}{2\theta}.
\end{align*}
For $n\in\NN$ we put
\begin{align*}
E_n: = \left\{z \in \RR^3 \colon z_1 > \frac{25 n}{2} |\mathbf z_2|^2 - \frac{1}{50n}\right\}.
\end{align*}
Thus,
for
$
z\in E_n
$
we have
$
25n(|z| - z_1)<1
$.
 Then, by taking $x=0$ and $y = (25n,\mathbf 0)$ in the first inequality below,
and using  ${\rm supp} (V_n)\subset E_{n} \cap B(0,25n)$ in the second one,
\begin{align*}
\|K(V_n,1)\|_{\infty}
&=\sup_{x,y \in \RR^3} \int_{|z-x| \le |y|} 
\frac{e^{-\frac{|z-x||y|-\left<z-x,y\right>}{2}}}{|z-x|} |  V_n(z)| dz\\
&\geq \int_{|z|\le 25n} \frac{e^{-\frac12 \cdot 25n(|z| - z_1)}}{|z|} |V_n(z)| dz 
\geq  e^{-\frac12} \int_{\RR^3}\frac{1}{|z|}|V_n(z)|dz\,.
\end{align*}
Further, by the definition of $V_n$
and $C_{k,\sqrt{k/(25n)}}$,
\begin{align*}
\|K(V_n,1)\|_{\infty}
&\geq e^{-1/2} \sum_{k=1}^{n}\, \int_{\RR^3} \frac1{|z|} f\left(\frac{z_1}{25 n}\right) \ind_{C_{k,\sqrt{k/(25n)}}}(z)dz\\
&\geq e^{-1/2} \sum_{k=1}^{n} \int_k^{k+1/4} \frac1{k+1} f\left(\frac{z_1}{25n}\right)|D_{\sqrt{k/(25n)}}|  dz_1\\
&\geq \frac{\pi e^{-1/2}}{2} \sum_{k=1}^{n} \int_k^{k+1/4} f\left(\frac{z_1}{25n}\right)  
\frac{dz_1}{25n}\\
&\geq \frac{\pi e^{-1/2}}{8}\int_1^{n}
f\left(\frac{z_1}{25n}\right)  
\frac{dz_1}{25n}= \frac{\pi e^{-1/2}}{8} \int_{1/(25n)}^{1/25} f(r)dr.
\end{align*}
This ends the proof since $\int_0^{1/25}f(r)dr=\infty$.
\end{proof}

\begin{proof}[Proof of Theorem~\ref{thm:d3}]
For $n\in\NN$ let $V_n$ be as in Corollary~\ref{cor:kato}
and $(n_i)_{i\in\NN}$ be a sequence of natural numbers taken from Lemma~\ref{lem:nosharp}.
We take
$$
V:=-\sum_{i=1}^{\infty} V_{n_i}/2^i\,.
$$
By Lemma~\ref{lem:nosharp} we have
$$
\|K(V,1)\|_{\infty}\geq \sup_{i\in\NN} 2^{-i}\,\|K(V_{n_i},1)\|_{\infty}=\infty\,.
$$
Therefore, by Theorem~\ref{thm:new_equiv},
\eqref{ineq:2T-T}
and Corollary~\ref{cor:reduction}
part b) follows.
Next, we have
\begin{align*}
\sup_{x\in\RR^3}
\quad 
\int_{|z-x| < \varepsilon} \frac{1}{|z-x|} | V(z)| dz
&\leq 
\sum_{i=1}^{\infty} 2^{-i}
\sup_{x\in\RR^3} 
\quad 
\int_{|z-x| < \varepsilon} \frac{1}{|z-x|} | V_{n_i}(z)| dz\\
&\leq \sup_{x\in\RR^3,\,i\in\NN}
\quad 
\int_{|z-x| < \varepsilon} \frac{1}{|z-x|} | V_{n_i}(z)| dz\,,
\end{align*}
which
can be made arbitrary small by the choice of $\varepsilon$ due to
Corollary~\ref{cor:kato}.
This proves part a).
\end{proof}

\subsection{Proof of Theorem~\ref{thm:d2d1}}

Before we pass to the proof of Theorem~\ref{thm:d2d1} we show the following auxiliary result in $d=2$. For
$z\in\RR^2$ we write 
as usual
$z=(z_1,z_2)$, 
where $z_1,z_2\in \RR$.

\begin{lem}\label{lem:D}
Let $d=2$. For $r \geq 2$ we let 
$D_r=\{z\in\RR^2\colon z_1 \geq 0 \mbox{ and }  2 \leq |z|\leq r\}$.
There exists a constant $c>0$  such that for all Borel measurable $U\colon \RR^2\to [0,\infty]$ and $r\geq 2$,
$$
\int_{D_r} K(1,z,(r,0))\, U(z) dz
\leq c \sup_{w\in\RR^2} \int_{|z|\leq 2} U(z+w)dz\,.
$$
\end{lem}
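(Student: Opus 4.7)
The plan is to cover $D_r$ with unit squares and bound the integral term by term, reducing the lemma to an elementary lattice-point estimate. First, since $|z|\geq 2$ on $D_r$ and $r\geq 2$, we have $r|z|\geq 4$, and $\log(1+x)\leq x$ gives $K(1,z,(r,0)) \leq e^{-r(|z|-z_1)/2}/\sqrt{r|z|}$. I partition $\RR^2$ into closed unit squares $Q_{j,k}=[j,j+1]\times[k,k+1]$ for $j\geq 0$, $k\in\mathbb Z$ (using $z_1\geq 0$ on $D_r$), and write
\[
\int_{Q_{j,k}\cap D_r} K(1,z,(r,0))\,U(z)\,dz \;\leq\; \Big(\sup_{z\in Q_{j,k}\cap D_r}K(1,z,(r,0))\Big)\int_{Q_{j,k}} U(z)\,dz.
\]
Since $\mathrm{diam}(Q_{j,k})=\sqrt 2<2$, the last factor is at most $\sup_{w\in\RR^2}\int_{|z|\leq 2}U(z+w)\,dz$. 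The lemma thus reduces to showing that $\mathcal S_r := \sum_{(j,k)}\sup_{Q_{j,k}\cap D_r}K(1,\cdot,(r,0))$ is bounded by an absolute constant.

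To estimate the sup on each square, I would use that $|z|-z_1$ is minimized on $Q_{j,k}$ at the corner $(p,q)$ with $p=j+1$ and $q$ the element of $[k,k+1]\cap\mathbb Z$ of smallest absolute value; there $|z|-z_1 = \sqrt{p^2+q^2}-p = q^2/(\sqrt{p^2+q^2}+p)$. Meanwhile $\min_{Q_{j,k}}|z|\geq j$ for $j\geq 1$. I then split into three regimes: a near-axis strip $k\in\{-1,0\}$ where $q=0$ and the exponential is trivial; a parabolic regime $1\leq|k|\leq j+1$ where $\sqrt{p^2+q^2}-p\gtrsim k^2/(j+1)$; and a far regime $|k|>j+1$ where $\sqrt{p^2+q^2}-p\gtrsim |k|$. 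The finite contribution from the $j=0$ row is handled by direct inspection, and beyond that the constraint $|z|\leq r$ on $D_r$ truncates the outer sum at $j\leq r$.

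For the on-axis strip, $\sup K\lesssim 1/\sqrt{rj}$ and $\sum_{j=1}^{\lfloor r\rfloor}1/\sqrt{rj}\lesssim 1$. In the parabolic regime, $\sup K\lesssim e^{-c\,rk^2/(j+1)}/\sqrt{rj}$, and the Gaussian tail bound $\sum_{k\geq 1}e^{-c\,rk^2/(j+1)}\lesssim \sqrt{(j+1)/r}$ collapses the $(j,k)$-sum to $\sum_{j\leq r}1/r\lesssim 1$. In the far regime, $\sup K\lesssim e^{-c\,r|k|}/\sqrt{r|k|}$, and the double sum over $(j,k)$ with $|k|>j+1$ reduces to $\sum_{|k|\geq 1}|k|\,e^{-c\,r|k|}/\sqrt{r|k|}$, which is even exponentially small in $r$. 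Adding the three regimes and the finite $j=0$ contribution yields $\mathcal S_r\lesssim 1$, completing the proof.

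The main obstacle is the bookkeeping in the parabolic regime, where the number of contributing lattice points grows with the shell radius while each square's exponential weight decays; the two effects must balance exactly. The key mechanism is the Gaussian tail estimate $\sum_{k\geq 1}e^{-c\,rk^2/(j+1)}\lesssim \sqrt{(j+1)/r}$, which combined with the truncation $j\leq r$ makes the $j$-sum collapse to a constant. The other regimes and the $j=0$ row are either trivially finite or exponentially suppressed in $r$.
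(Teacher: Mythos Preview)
Your proof is correct but follows a genuinely different route from the paper. The paper covers $D_r$ by ``parabolic'' strips $F_n=\{z:\,n\le r(|z|-z_1)\le n+1,\ 0\le z_1\le r\}$, on which the exponential weight is essentially constant, and then covers each $F_n$ by rectangles $P_{n,k}=[k\delta,(k+1)\delta]\times[f_n(k\delta),f_{n+1}((k+1)\delta)]$ (where $f_n(z_1)=\sqrt{2nz_1r+n^2}/r$) whose diagonals are shown to be at most~$2$. On $D_r\cap F_n\cap P_{n,k}$ one then bounds $K$ by $e^{-n/2}/\sqrt{r(k\delta/2+1)}$, and the double sum $\sum_n e^{-n/2}\sum_{k\le r/\delta}(k\delta/2+1)^{-1/2}/\sqrt{r}$ is bounded uniformly in~$r$.

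Your approach instead uses the fixed Cartesian grid of unit squares $Q_{j,k}$ and splits the lattice points into an on-axis strip, a parabolic regime $|k|\le j+1$, and a far regime $|k|>j+1$. This is more elementary in that the covering is independent of $r$ and no analysis of the functions $f_n$ is needed; the price is the three-regime case analysis and the Gaussian-tail estimate $\sum_{k\ge 1}e^{-crk^2/(j+1)}\lesssim\sqrt{(j+1)/r}$, which in the paper's decomposition is replaced by the single geometric series $\sum_n e^{-n/2}$. Both arguments ultimately exploit the same mechanism: the level sets of $|z|-z_1$ are thin parabolic arcs, so the weight $e^{-r(|z|-z_1)/2}/\sqrt{r|z|}$ has bounded total mass over any family of cells of diameter $\le 2$. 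Your route is perhaps more portable to other kernels, while the paper's is more transparently adapted to the specific exponent.
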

\begin{proof}
Note that for $r>0$ and $n\in\NN \cup \{0\}$,
$$
r(|z|-z_1)\leq n \quad \iff \quad |z_2|\leq \frac{\sqrt{2 n z_1 r+n^2}}{r}\,.
$$
In the rest of proof we consider $r\geq 2$ and $0\leq z_1\leq r$.
For $n\in\NN \cup \{0\}$ we let
$$
f_n(z_1):=\frac{\sqrt{2nz_1r+n^2}}{r}
\qquad
\mbox{and}
\qquad
F_n:=\{ z\in\RR^2\colon f_n(z_1)\leq |z_2|\leq f_{n+1}(z_1),\,0\leq z_1\leq r\}\,.
$$
Obviously, $f_n$ and $F_n$ depend on $r$, which we do not indicate explicitly to lighten the notation.
In particular, 
$
n \leq r(|z|-z_1) \leq n+1  \iff  z \in F_n
$.
A direct analysis of the derivative shows that
for each $a\geq 0$ and $b>0$ 
a function
$$
h(t)=\sqrt{2(a+b)(t+1)+(t+1)^2}-\sqrt{2at+t^2}\,,\qquad t\geq 0\,,
$$
is decreasing on $[0,a/b]$ and increasing on $[a/b,\infty)$. This guarantees
for each $\delta\in (0,1)$ that
\begin{align*}
f_{n+1}(z_1+\delta)-f_n(z_1) 
&\leq
\max \left\{ f_1(z_1+\delta), \lim_{n\to\infty} (f_{n+1}(z_1+\delta)-f_n(z_1))\right\}\\
&= \max\left\{\frac{\sqrt{2z_1 r+1}}{r},\delta+\frac1r\right\}
\leq \max\left\{\sqrt{2+\frac1{r^2}},\delta+\frac1r\right\}\leq \frac32
\end{align*}
We fix $\delta\in (0,1)$ (any $\delta \leq \sqrt{7}/2$ has that property) so that
for all $n\in\NN \cup \{0\}$,
\begin{align}\label{eq:diag}
\sqrt{(f_{n+1}(z_1+\delta)-f_n(z_1))^2+\delta^2} \leq 2\,.
\end{align}
For $n,k\in\NN \cup \{0\}$ we define rectangles
$$
P_{n,k}:=\Big[ k\delta,\,(k+1)\delta\Big]\times \Big[f_n(k\delta),\,f_{n+1}((k+1)\delta)\Big]\subset \RR^2\,.
$$
\begin{figure}[h]
\includegraphics{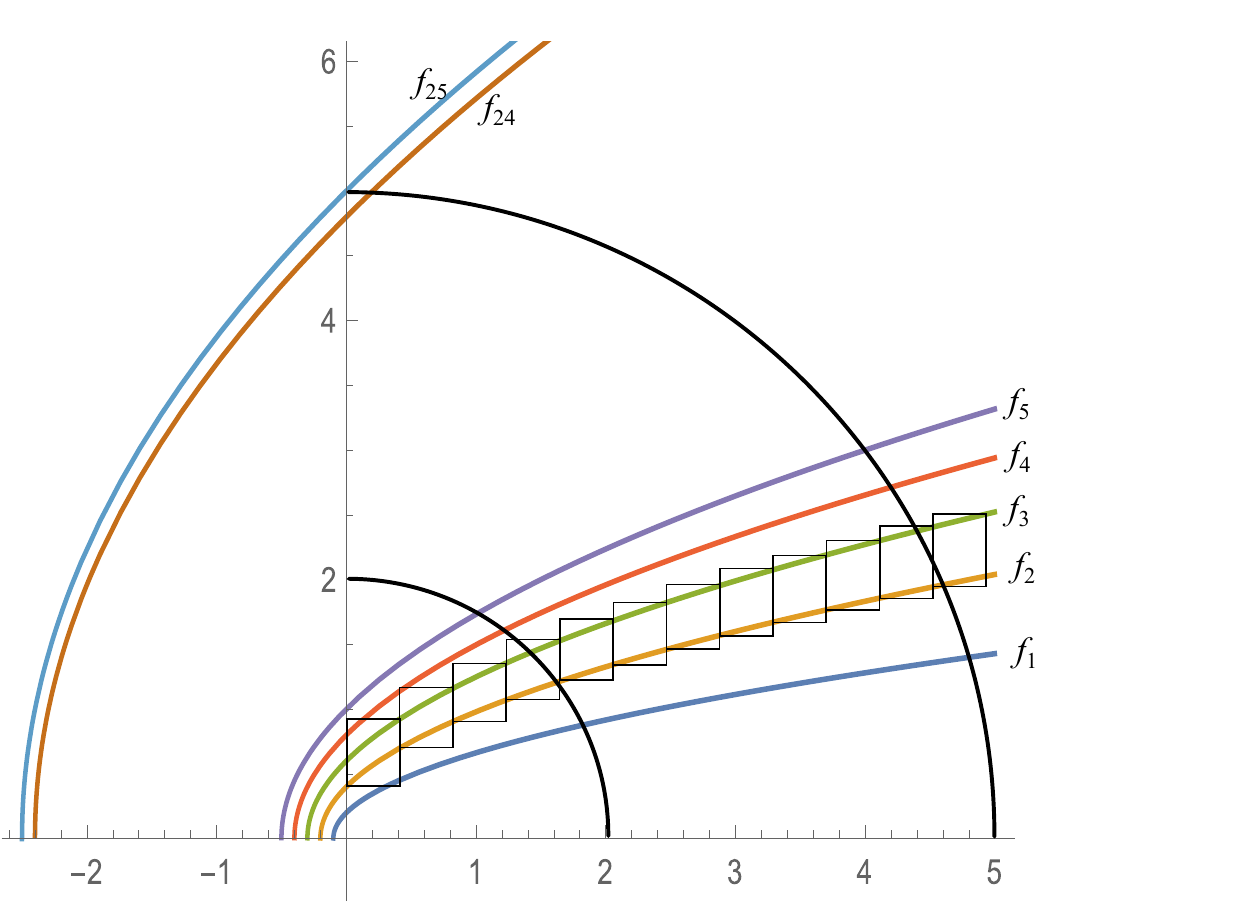}
\centering
\caption{Graphs of functions $f_n$ and rectangles $P_{i,k}$ for $r=5$, $i=2$ and $\delta =2/5$. Here $F_2 \subset \bigcup\limits_{k=0}^{11} P_{2,k}$.}
\end{figure}

\noindent
The bottom left vertex of $P_{n,k}$ equals $a_{n,k}=(k\delta, f_n(k\delta))$
and satisfies $|a_{n,k}|=k\delta+\frac{n}{r}$. Furthermore, 
if~$k\leq \lfloor r/\delta \rfloor$, then $k\delta\leq r$ and by \eqref{eq:diag} the diagonal of $P_{n,k}$ does not exceed 2. Hence
$P_{n,k}\subseteq B(a_{n,k},2)$, where the latter is 
a 2-dimensional ball of radius $2$ centred at $a_{n,k}$.
Next, observe that
$$
D_r\subseteq   \bigcup_{n=0}^{\lfloor r^2 \rfloor} F_n 
\qquad
\mbox{and}
\qquad
F_n\subseteq \bigcup_{k=0}^{\lfloor \frac{r}{\delta} \rfloor} P_{n,k}\,.
$$
Finally, on $D_r \cap F_n \cap P_{n,k}$ we have
\begin{align*}
K(1,z,(r,0)) &= e^{-\frac{1}{2} r(|z|-z_1)}\log\left(1+\frac{1}{\sqrt{r|z|}}\right)\ind_{|z|\leq r}\\
&\leq
\frac{e^{-\frac{1}{2} r(|z|-z_1)}}{\sqrt{r|z|}}
\leq 
\frac{e^{-n/2}}{\sqrt{r\max\{|a_{n,k}|,2\}}}
\leq 
\frac{e^{-n/2}}{\sqrt{r(k\delta/2+1)}} \ind_{B(a_{n,k},2)}(z)\,.
\end{align*}
This implies
\begin{align*}
\int_{D_r \cap F_n \cap P_{n,k}} K(1,z,(r,0))\, U(z) dz
&\leq 
\frac{e^{-n/2}}{\sqrt{r(k\delta/2+1)}}
 \int_{|z|\leq 2} U(z+a_{n,k})dz \\
&\leq 
\frac{e^{-n/2}}{\sqrt{r(k\delta/2+1)}} \sup_{w \in \RR^2}
 \int_{|z|\leq 2} U(z+w)dz\,.
\end{align*}
It remains to notice that
$\frac{1}{\sqrt{r}}\sum\limits_{k=0}^{\lfloor \frac{r}{\delta} \rfloor}(k\delta/2+1)^{-1/2}\leq 1+ 4/\delta$
and $\sum\limits_{n=0}^{\infty}e^{-n/2}<\infty$.
\end{proof}

\begin{proof}[Proof of Theorem~\ref{thm:d2d1}]

The lower bound in \eqref{ineq:d2d1}
follows immediately from 
\eqref{ineq:2T-T},
\eqref{ineq:S_N} and
\eqref{ineq:N_r}.
We focus on the upper bound.
Due to Theorem~\ref{thm:new_equiv} it suffices to estimate $\|K(V,t)\|_{\infty}$, $t>0$.
First we consider $d=2$.
For $|y|\leq 2 t^{-1/2}$ we have
\begin{align}\label{ineq:D2}
K(t,x,y)\leq \log\left(1+\frac{
\sqrt{t}}{|x|}\right)\ind_{|x|\leq \sqrt{4t}}
\leq \left( 1+ \log \frac{4t}{|x|^2}\right)\ind_{|x|\leq \sqrt{4t}}\,.
\end{align}
Therefore,
by 
\eqref{est:A2}
there is an absolute constant $c>0$ such that
$$
\sup_{|y|\leq 2 t^{-1/2}} \sup_{x\in\RR^2}K(V,t,x,y)\leq c \sup_{x\in\RR^2} \int_0^t\int_{\RR^2} g(s,x,z) |V(z)|dzds\,.
$$
We focus on $|y|\geq 2 t^{-1/2}$.
Let
\begin{align*}
A_1&=\{z\in\RR^2 \colon \left<z-x,y\right> \leq 0\}\,,\\
A_2&=\{z\in\RR^2 \colon \left<z-x,y\right> \geq 0 \mbox{ and } |z-x|\leq \sqrt{4 t}\}\,,\\
A_3&=\{z\in\RR^2 \colon \left<z-x,y\right> \geq 0 \mbox{ and } \sqrt{4t} \leq |z-x|\leq t|y|\} \,.
\end{align*}
On~the set $A_1$ we have $|z-x-sy| \ge |z-x|$, hence
by \eqref{ineq:J_K_fun} we get
$$
n_1 K(t,z-x,y)
\leq \int_0^t p_{(-y/2)}(s,x,z)ds
= \int_0^t g(s,x+sy,z)ds
\leq \int_0^t g(s,x,z)ds\,.
$$
Thus
$$
\sup_{|y|\geq 2 t^{-1/2}} \sup_{x\in\RR^2}\int_{A_1} K(t,z-x,y)|V(z)|dz\leq (1/n_1)\sup_{x\in\RR^2}
\int_0^t\int_{\RR^2} g(s,x,z) |V(z)|dzds\,.
$$
On the set $A_2$ we argue like in \eqref{ineq:D2}, therefore
$$
\sup_{|y|\geq 2 t^{-1/2}} \sup_{x\in\RR^2}\int_{A_2} K(t,z-x,y)|V(z)|dz
\leq
c \sup_{x\in\RR^2} \int_0^t\int_{\RR^2} g(s,x,z) |V(z)|dzds\,.
$$

It remains now to consider
$$
 \sup_{|y|\geq 2 t^{-1/2}} \sup_{x\in\RR^2}\int_{A_3} K(t,z-x,y)|V(z)|dz\,.
$$
Given $|y|\geq 2t^{-1/2}$ we let 
$$\mathcal{O}_y
=\left[\begin{array}{rr} y_1 |y|^{-1} & y_2 |y|^{-1}  \\ -y_2|y|^{-1} & y_1 |y|^{-1} \end{array} \right].
$$
Note that $\mathcal{O}_y
$ is a rotation matrix in $\RR^2$ such that $\mathcal{O}_y \,y=(|y|,0)$. 
Then, substituting 
$z$ by
$t^{1/2}\mathcal{O}_y^{-1} z$, we obtain
\begin{align}\label{eq:D_3}
\int_{A_3} K(t,z-x,y)|V(z)|dz
&= \int_{D_r} K(1,z,(r,0))\, U(z) dz\,,
\end{align}
where
$$
r=t^{1/2}|y|\,,
\quad
D_r=\{z\in\RR^2\colon z_1 \geq 0 \mbox{ and }  2 \leq |z|\leq r\}\,,
\quad
U(z)= t|V(t^{1/2}\mathcal{O}_y^{-1} z+x)|\,.
$$
Combining
\eqref{eq:D_3}
and
Lemma~\ref{lem:D} we get for $|y|\geq 2 t^{-1/2}$,
\begin{align*}
\int_{A_3} K(t,z-x,y)|V(z)|dz
&\leq 
c \sup_{w\in\RR^2}
\int_{|z|\leq 2} t|V(t^{1/2}\mathcal{O}_y^{-1}z+t^{1/2}\mathcal{O}_y^{-1}w+ x)|dz\\
&\leq
c \sup_{\widetilde{w}\in\RR^2}
\int_{|z|\leq \sqrt{4t}} |V(z+\widetilde{w})|dz\,.
\end{align*}
Thus by 
\eqref{est:A2},
$$
 \sup_{|y|\geq 2 t^{-1/2}} \sup_{x\in\RR^2}\int_{A_3} K(t,z-x,y)|V(z)|dz
\leq
 c \sup_{x\in\RR^2} \int_0^t\int_{\RR^2} g(s,x,z) |V(z)|dzds\,.
$$
This finally gives the desired estimate and ends the proof for $d=2$.

Now, let $d=1$.
Using \cite[Lemma~4.2]{MR3200161} with
$k(x)=\sqrt{t}\left(1+t|y|^2\right)^{-1/2}
\ind_{|x|\leq t|y|}$ and $K(x)=\sqrt{t}$
we get for $r>0$,
\begin{align*}
\|K(V,t)\|_{\infty}
&\leq \sup_{x,y\in\RR} \int_{\RR}
\sqrt{t}\left(1+t|y|^2\right)^{-1/2}
\ind_{|z-x|\leq t|y|} |V(z)|dz\\
&\leq 
\sup_{x,y\in\RR}
\left(1+\frac{\sqrt{4t}}{r}\left(\frac{t|y|^2}{1+t|y|^2} \right)^{1/2}\right)
 \int_{|z|<r} \sqrt{t} |V(z)|dz
\leq \left(1+\frac{\sqrt{4t}}{r}\right)  \sup_{x\in\RR}  \sqrt{t} \!\!\!\int_{|z|<r} |V(z)|dz\,.
\end{align*}
Eventually, we put $r=\sqrt{4t}$ and use
\eqref{est:A1}, which ends the proof.
\end{proof}

\section{Corollaries and proof of Theorem~\ref{thm:t1}}

We will now give corollaries of Theorems \ref{thm:dgeq4} -- \ref{thm:d2d1}. We will seperately consider the cases $d \ge 4$, $d=3$, $d=2$ and $d=1$.
We begin with $d\geq 4$ and an aftermath of Theorem~\ref{thm:dgeq4}.
\begin{cor}\label{cor:d4}
Let $d\geq 4$. There is compactly supported $V\leq 0$ such that
\begin{enumerate}[label*=(\roman*)]
\item
\eqref{est:gaus_b} holds with $\varepsilon_1< 1<\varepsilon_2$ arbitrarily close to $1$ ,
\item \eqref{est:gaus} holds,
\item  \eqref{est:sharp_time} does not hold 
.
\end{enumerate}
By considering $-V$ we can obtain a similar non-negative example.
\end{cor}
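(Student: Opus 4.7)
The plan is to derive the corollary from Theorem~\ref{thm:dgeq4} by a simple scaling argument. Let $V_0$ be the function provided by Theorem~\ref{thm:dgeq4}: $V_0 \leq 0$, $\supp V_0 \subseteq B(0,1)$, $V_0 \in \pK_d$, $\|\Delta^{-1} V_0\|_\infty < \infty$, and $\|S(V_0,t)\|_\infty = \infty$ for every $t > 0$. For a parameter $\alpha \in (0,1]$ to be chosen, set $V := \alpha V_0$. Then $V \leq 0$, $\supp V \subseteq B(0,1)$, $V \in \pK_d$ with Kato quantity $\alpha$ times that of $V_0$, and $\|\Delta^{-1} V\|_\infty = \alpha\|\Delta^{-1} V_0\|_\infty$. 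Since $S$ is linear in $V$, $\|S(V,t)\|_\infty = \alpha\|S(V_0,t)\|_\infty = \infty$ for every $t > 0$, independently of $\alpha$; Lemma~\ref{lem:comb}(1) then gives (iii) at once.

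Next I would establish (i) and (ii). For the upper inequalities in \eqref{est:gaus_b} and \eqref{est:gaus}, the Duhamel identity together with $V \leq 0$ and $G \geq 0$ gives $G \leq g$ pointwise; hence these upper bounds hold with $c_2 = \varepsilon_2 = 1$ for all $t > 0$. For the lower bound in (i), fix $T > 0$ and observe that the Kato quantity
\[
\beta(T) := \sup_{x\in\Rd} \int_0^T\!\int_{\Rd} g(s,x,z)|V(z)|\,dz\,ds = \alpha \sup_{x\in\Rd} \int_0^T\!\int_{\Rd} g(s,x,z)|V_0(z)|\,dz\,ds
\]
can be made arbitrarily small by shrinking $\alpha$, since $V_0 \in \pK_d$. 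One would then invoke the classical Kato-class heat-kernel perturbation theory (cf.\ the references collected after Lemma~\ref{lem:Katodescr}) to produce, for any preselected $\varepsilon_1 \in (0,1)$, a threshold $\beta_0(\varepsilon_1) > 0$ such that $\beta(T) \leq \beta_0(\varepsilon_1)$ implies
\[
G(t,x,y) \geq c_1 (4\pi t)^{-d/2} e^{-|y-x|^2/(4\varepsilon_1 t)}, \qquad 0 < t \leq T,\ x,y\in\Rd,
\]
with $c_1 = c_1(\beta(T),\varepsilon_1) \to 1$ as $\beta(T) \to 0$. Picking $\alpha$ small enough yields (i). Statement (ii) then follows from (i), the semigroup property of $G$ and the (smallness of) $\|\Delta^{-1} V\|_\infty$, which controls the cumulative action of the compactly supported $V$ on the free heat kernel over arbitrarily long time horizons.

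The main obstacle is the lower plain Gaussian estimate. Because $\|S(V,t)\|_\infty = \infty$, the entire $S$-based machinery of the paper (most notably Lemma~\ref{lem:comb}(3)) is unavailable for this bound; one must step outside the $S$-framework and appeal to genuine Kato-class arguments, which is precisely what makes the local-plain vs.\ local-sharp distinction of Theorem~\ref{thm:t1} nontrivial in the range $d \geq 4$. Finally, for the non-negative companion statement at the end of the corollary, one replaces $V$ by $-V \geq 0$; the Feynman--Kac/Cauchy--Schwarz identity
\[
G^V(t,x,y)\, G^{-V}(t,x,y) \geq g(t,x,y)^2
\]
trades the upper bounds for $V$ into lower bounds for $-V$ and vice versa, while $\|S(-V,t)\|_\infty = \|S(V,t)\|_\infty = \infty$ together with Lemma~\ref{lem:comb}(2) preserves the failure of \eqref{est:sharp_time}.
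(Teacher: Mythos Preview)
Your skeleton is right: take $V$ from Theorem~\ref{thm:dgeq4}, deduce (iii) from $\|S(V,t)\|_\infty=\infty$ via Lemma~\ref{lem:comb}, and appeal to Kato-class theory for (i). This matches the paper. Two steps in your execution, however, do not go through.

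First, the parameter $\alpha$ is unnecessary for the negative case and creates a quantifier problem in (i). You choose $\alpha$ \emph{after} fixing $\varepsilon_1$ (to force $\beta(T)\le\beta_0(\varepsilon_1)$), so your $V$ depends on $\varepsilon_1$; the corollary asks for a single $V$ for which (i) holds for \emph{every} $\varepsilon_1<1<\varepsilon_2$. The paper simply takes $V=V_0$ and cites \cite[Theorem~1A]{MR1994762}: membership in $\pK_d$ alone already yields the local plain Gaussian bounds with $\varepsilon_1,\varepsilon_2$ arbitrarily close to $1$, no scaling needed. Likewise, for (ii) the paper invokes \cite[p.~556 and Corollary~A]{MR1772429}, which gives the global plain lower bound directly from $\|\Delta^{-1}V\|_\infty<\infty$ when $V\le 0$; smallness is not required, so again no $\alpha$. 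Your sketch ``(i) + semigroup + smallness of $\|\Delta^{-1}V\|_\infty$'' does not replace that citation.

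Second, and more seriously, the Jensen/Cauchy--Schwarz inequality $G^{V}G^{-V}\ge g^2$ goes only one way. It converts an \emph{upper} bound on $G^{V}$ into a \emph{lower} bound on $G^{-V}$, but the ``vice versa'' you assert---turning a lower bound on $G^{V}$ into an upper bound on $G^{-V}$---is false (if $X$ has large variance then $E[e^{X}]E[e^{-X}]$ is unbounded, so no reverse inequality exists). For $-V\ge 0$ the lower bound $G^{-V}\ge g$ is trivial anyway; the entire difficulty lies in the \emph{upper} bounds, which your duality does not touch. The paper handles the non-negative case by rescaling $-V$ so that $\|\Delta^{-1}(-V)\|_\infty$ is small and then citing \cite[Theorem~1.4]{MR3200161} for the global upper plain Gaussian bound; the local upper bound in (i) again comes from $-V\in\pK_d$ via \cite[Theorem~1A]{MR1994762}. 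Your use of Lemma~\ref{lem:comb}(2) for (iii) in the non-negative case is correct.
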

\begin{proof}
We take $V\leq 0$ from Theorem~\ref{thm:dgeq4}. We justify all statements by using
parts (a), (b), (c) and (d) of the theorem along with the references indicated below. Namely,
\begin{enumerate}[label*=$\vcenter{\hbox{\tiny$\bullet$}}$]
	\item $V$ is compactly supported by (a),
	\item \textit{(i)} follows from (b) and \cite[Theorem~1A]{MR1994762},
\item \textit{(ii)} follows from (c) and \cite[p. 556 and Corollary~A]{MR1772429},
\item  \textit{(iii)} follows from (d), Corollary~\ref{cor:reduction} and Lemma~\ref{lem:comb}. 
\end{enumerate}
For a non-negative example we may need to multiply $-V$ by a small constant to obtain (c$'$) $\|\Delta^{-1} V\|_{\infty}< \varepsilon$ (small) and use for instance \cite[Theorem~1.4]{MR3200161} to get {\it (ii)}.
\end{proof}

A similar argumentation based on Theorem~\ref{thm:d3}, \cite[Theorem~1A and~1B]{MR1994762}, Corollary~\ref{cor:reduction} and Lemma~\ref{lem:comb} gives consequences for $d=3$.
As pointed out after Theorem~\ref{thm:d3} we cannot expect an example of $V\leq 0$ that satisfies \eqref{est:gaus}, but not \eqref{est:sharp_time}.
 
\begin{cor}\label{cor:d3}
Let $d=3$. There is $V\leq 0$
(of unbounded support)
such that
\begin{enumerate}[label*=(\roman*)]
\item
\eqref{est:gaus_b} holds
with 
$\varepsilon_1< 1<\varepsilon_2$
arbitrarily close to $1$,
\item 
\eqref{est:sharp_time} fails to hold.
\end{enumerate}
\end{cor}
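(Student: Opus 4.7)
The plan is to take the potential $V_0$ produced by Theorem~\ref{thm:d3}, rescale it by a small factor to control the Gaussian exponents, and then read off both properties from the tests already developed in the paper. The whole argument mirrors the recipe used for Corollary~\ref{cor:d4}, with Theorem~\ref{thm:d3} playing the role that Theorem~\ref{thm:dgeq4} played there.

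First I would fix $V_0\leq 0$ supplied by Theorem~\ref{thm:d3}, so $V_0\in\pK_3$ and $\|S(V_0,t)\|_\infty=\infty$ for every $t>0$. For a parameter $\delta>0$ to be chosen later, set $V:=\delta V_0$. Since $S$ is positively homogeneous in $|V|$, we still have $\|S(V,t)\|_\infty=\delta\|S(V_0,t)\|_\infty=\infty$ for every $t>0$ and every $\delta>0$. Part (ii) then follows by combining Corollary~\ref{cor:reduction}, which promotes this to $\|S(V)\|_{T,\infty}=\infty$ for all $T>0$, with part~1) of Lemma~\ref{lem:comb}, the relevant equivalence when $V\leq 0$.

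For part (i), the Kato condition $V_0\in\pK_3$ together with linearity of the Kato integral in $|V|$ means that by making $\delta$ small I can force $\sup_{x}\int_0^T\int_{\RR^3} g(s,x,z)|V(z)|\,dz\,ds$ below any prescribed threshold (on a possibly short interval $(0,T]$). Feeding this smallness into \cite[Theorem~1A]{MR1994762} — the same input used in the proof of Corollary~\ref{cor:d4}(i) — yields \eqref{est:gaus_b} on $(0,T]$ with $\varepsilon_1<1<\varepsilon_2$ arbitrarily close to $1$.

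The one delicate point is to check that the rescaling $V=\delta V_0$ needed for part (i) does not accidentally restore sharp bounds; this is exactly where the positive homogeneity of $\|S(V,t)\|_\infty$ in $|V|$ is used, and it protects (ii) under any positive scaling. For completeness, the parenthetical "of unbounded support" in the statement is automatic: if $V_0$ were compactly supported then $V_0\in\pK_3$ would give $\|\Delta^{-1}V_0\|_\infty<\infty$, and in dimension three this is equivalent to $\sup_{t>0}\|S(V_0,t)\|_\infty<\infty$ by \cite[(7) and (8)]{MR3914946}, contradicting property (b) of Theorem~\ref{thm:d3}. Hence the construction necessarily yields $V$ with non-compact support, consistent with the remark after Theorem~\ref{thm:d3}.
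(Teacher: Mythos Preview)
Your proof is correct and follows essentially the same route as the paper's one-line argument (Theorem~\ref{thm:d3} combined with \cite[Theorem~1A]{MR1994762}, Corollary~\ref{cor:reduction} and Lemma~\ref{lem:comb}). One minor simplification: the rescaling $V=\delta V_0$ is unnecessary, since $V_0\in\pK_3$ already forces $\sup_x\int_0^T\int_{\RR^3} g(s,x,z)|V_0(z)|\,dz\,ds\to 0$ as $T\to 0^+$, which is all that \cite[Theorem~1A]{MR1994762} needs to produce \eqref{est:gaus_b} with $\varepsilon_1,\varepsilon_2$ arbitrarily close to~$1$ on a suitably short interval; you can therefore take $V=V_0$ directly.
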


Here is what results from
Theorem~\ref{thm:d2d1} for $d=2$.

\begin{cor}\label{cor:d2_1}
Let $d=2$. We have
\begin{enumerate}
\item[1)]  $V\in \mathcal{K}_2$ if and only if $\lim_{T\to 0^+}\|S(V)\|_{T,\infty}=0$.
\item[2)] $V\in \widehat{\mathcal{K}}_2$  if and only if $\|S(V)\|_{T,\infty}<\infty$ for some (every) $T>0$.
\item[3)] If $V\leq 0$, then \eqref{est:sharp_time} holds if and only if
$V\in \widehat{\mathcal{K}}_2$.
\end{enumerate}
\end{cor}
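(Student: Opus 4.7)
The entire corollary is essentially a translation of Theorem~\ref{thm:d2d1} (specialized to $d=2$) into the language of the Kato and enlarged Kato classes, combined with Lemma~\ref{lem:comb}. Set
\[
A(T):=\sup_{x\in\RR^2}\int_0^T\!\!\int_{\RR^2} g(s,x,z)|V(z)|\,dz\,ds,
\]
so that the definitions \eqref{Kato} and \eqref{enKato} read $V\in\mathcal{K}_2 \iff \lim_{T\to 0^+}A(T)=0$, and $V\in\widehat{\mathcal{K}}_2 \iff A(T)<\infty$ for some (equivalently every) $T>0$. By Theorem~\ref{thm:d2d1} there is an absolute constant $c>0$ such that
\[
c^{-1}\,A(T)\le \|S(V)\|_{T,\infty}\le c\,A(T),\qquad T>0.
\]

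For part 1), taking $T\to 0^+$ in the two-sided bound above gives $\lim_{T\to 0^+}\|S(V)\|_{T,\infty}=0$ iff $\lim_{T\to 0^+}A(T)=0$, i.e., iff $V\in\mathcal{K}_2$. For part 2), the same inequality shows $\|S(V)\|_{T,\infty}<\infty$ iff $A(T)<\infty$; together with the well-known fact that $A(T)<\infty$ for some $T>0$ implies $A(T)<\infty$ for every $T>0$ (and the analogous property on the $\|S\|$ side from \eqref{ineq:2T-T}), this gives the characterization of $\widehat{\mathcal{K}}_2$.

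For part 3), assume $V\le 0$. By Lemma~\ref{lem:comb}~1), the local sharp Gaussian estimate \eqref{est:sharp_time} is equivalent to $\|S(V)\|_{T,\infty}<\infty$ for each $T\in(0,\infty)$. Combining this with part 2) yields \eqref{est:sharp_time} $\iff V\in\widehat{\mathcal{K}}_2$.

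No step is a real obstacle: the delicate analysis has already been absorbed into Theorem~\ref{thm:d2d1} and Lemma~\ref{lem:comb}, and what remains is essentially bookkeeping. The only point worth verifying carefully is the ``some $\iff$ every $T>0$'' aspect in part 2), which on the $A(T)$ side is a standard consequence of the semigroup property of $g$ and on the $\|S\|$ side follows from \eqref{ineq:2T-T}.
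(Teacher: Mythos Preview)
Your proof is correct and follows essentially the same approach as the paper: both deduce parts 1) and 2) directly from Theorem~\ref{thm:d2d1} and the definitions of $\mathcal{K}_2$, $\widehat{\mathcal{K}}_2$, and then obtain part 3) by combining part 2) with Lemma~\ref{lem:comb}~1). Your write-up simply makes explicit the details (the role of \eqref{ineq:2T-T} for the ``some $\iff$ every $T$'' claim) that the paper leaves implicit.
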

\begin{proof}
The first two statements follow from
Theorem~\ref{thm:d2d1} and the definitions of 
$\mathcal{K}_2$ and  $\widehat{\mathcal{K}}_2$.
The last one follows from Lemma~\ref{lem:comb} and {\it 2)}.
\end{proof}

Finally we focus on $d=1$ in view of Theorem~\ref{thm:d2d1}.

\begin{cor}\label{cor:d1_1}
Let $d=1$. 
The following conditions are equivalent
\begin{enumerate}
\item[a)] $V\in \mathcal{K}_1$,
\item[b)] $V\in \widehat{\mathcal{K}}_1$,
\item[c)] $\sup_{x\in\Rd} \int_{|z-x|\leq 1} |V(z)|<\infty$,
\item[d)] $\lim_{T\to 0^+}\|S(V)\|_{T,\infty}=0$,
\item[e)] $\|S(V)\|_{T,\infty}<\infty$ for some (every) $T>0$. 
\end{enumerate}
\end{cor}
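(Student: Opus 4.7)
The plan is to bootstrap off two earlier results: Theorem~\ref{thm:d2d1}, which in $d=1$ says that $\|S(V)\|_{T,\infty}$ is comparable (up to an absolute constant) to
\[
A(T):=\sup_{x\in\RR}\int_0^T\!\!\int_{\RR} g(s,x,z)|V(z)|\,dz\,ds,
\]
and the $d=1$ part of Lemma~\ref{lem:Katodescr}, which identifies $A(T)$ (up to constants) with $\sqrt{T}\sup_{x\in\RR}\int_{|z-x|<\sqrt{4T}}|V(z)|\,dz$. Combining these, the two quantities $\|S(V)\|_{T,\infty}$ and $\sqrt{T}\sup_x\int_{|z-x|<\sqrt{4T}}|V(z)|\,dz$ are comparable uniformly in $T>0$ and $V$. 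The corollary is then just an unpacking of definitions.

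First, by Theorem~\ref{thm:d2d1}, the condition (d) $\lim_{T\to 0^+}\|S(V)\|_{T,\infty}=0$ is equivalent to $\lim_{T\to 0^+}A(T)=0$, which by definition is (a) $V\in\mathcal{K}_1$. In the same way, (e) $\|S(V)\|_{T,\infty}<\infty$ for some (every) $T>0$ is equivalent to $A(T)<\infty$ for some (every) $T>0$, which is the definition of (b) $V\in\widehat{\mathcal{K}}_1$. (The "some iff every" clause can also be checked directly from the subadditivity $A(2T)\le 2A(T)$, which is the analogue of \eqref{ineq:2T-T} for $A$.) Thus (a) $\Leftrightarrow$ (d) and (b) $\Leftrightarrow$ (e), and trivially (a) $\Rightarrow$ (b).

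It remains to close the loop via (c). For (c) $\Rightarrow$ (a), set $M:=\sup_{x\in\RR}\int_{|z-x|\le 1}|V(z)|\,dz$. For $0<T\le 1/4$ we have $\sqrt{4T}\le 1$, hence
\[
\sqrt{T}\sup_{x\in\RR}\int_{|z-x|<\sqrt{4T}}|V(z)|\,dz \le M\sqrt{T},
\]
which by the right-hand bound of \eqref{est:A1} gives $A(T)\le C_1^{-1}M\sqrt{T}\to 0$, so $V\in\mathcal{K}_1$. Conversely, for (b) $\Rightarrow$ (c), pick any $T_0>0$ with $A(T_0)<\infty$ and $\sqrt{4T_0}\ge 1$ (allowed by (b), since $A$ is finite for every $T$, or alternatively by monotonicity and the subadditivity noted above). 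The left-hand bound of \eqref{est:A1} yields
\[
\sup_{x\in\RR}\int_{|z-x|\le 1}|V(z)|\,dz\le\sup_{x\in\RR}\int_{|z-x|<\sqrt{4T_0}}|V(z)|\,dz\le \frac{C_2\,A(T_0)}{\sqrt{T_0}}<\infty,
\]
which is exactly (c). This gives (b) $\Rightarrow$ (c) $\Rightarrow$ (a) $\Rightarrow$ (b), closing the circle.

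There is no real obstacle here: once Theorem~\ref{thm:d2d1} and the $d=1$ case of Lemma~\ref{lem:Katodescr} are in hand, the argument is entirely bookkeeping on definitions together with the elementary observations that a ball of radius $1$ in $\RR$ is covered by a fixed finite number of balls of radius $\sqrt{4T_0}$, and that $\sqrt{T}\to 0$ as $T\to 0^+$.
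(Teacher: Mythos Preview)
Your proof is correct and follows essentially the same route as the paper: the equivalences (a)\,$\Leftrightarrow$\,(d) and (b)\,$\Leftrightarrow$\,(e) come straight from Theorem~\ref{thm:d2d1}, while (a)\,$\Leftrightarrow$\,(b)\,$\Leftrightarrow$\,(c) is unpacked from \eqref{est:A1}, just as the paper indicates (more tersely) by citing that estimate. One cosmetic point: you have swapped ``left-hand'' and ``right-hand'' when referring to the two inequalities in \eqref{est:A1} --- the bound $A(T)\le C_1^{-1}M\sqrt{T}$ comes from the left inequality $C_1A(t)\le[\cdots]$, and the bound $\sup_x\int_{|z-x|<\sqrt{4T_0}}|V|\le C_2A(T_0)/\sqrt{T_0}$ from the right inequality $[\cdots]\le C_2A(t)$.
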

\begin{proof}
The equivalence of  {\it a)}, {\it b)} and {\it c)} is well known and follows for instance from
\eqref{est:A1}.
Part {\it a)} is equivalent to {\it d)}, and part {\it b)} to {\it e)}
by
Theorem~\ref{thm:d2d1}.
\end{proof}

\begin{cor}\label{cor:d1_2}
Let $d=1$. If $V$ is of fixed sign, then \eqref{est:sharp_time} holds if and only if
$V\in \mathcal{K}_1$.
\end{cor}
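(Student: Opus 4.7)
The plan is to split the argument according to the sign of $V$ and rely on Lemma~\ref{lem:comb} together with Corollary~\ref{cor:d1_1}, which already exhibits the coincidence of the Kato and enlarged Kato classes in dimension one and translates Kato membership into size properties of $\|S(V)\|_{T,\infty}$.

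First consider $V\leq 0$. By part~1) of Lemma~\ref{lem:comb}, \eqref{est:sharp_time} is equivalent to $\|S(V)\|_{T,\infty}<\infty$ for some (every) $T>0$. By the equivalence of parts {\it b)} and {\it e)} in Corollary~\ref{cor:d1_1}, this is in turn equivalent to $V\in\widehat{\mathcal{K}}_1$, and then by the equivalence {\it a)}$\Leftrightarrow${\it b)} of the same corollary, equivalent to $V\in\mathcal{K}_1$. This settles one case in a single chain of equivalences.

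Now consider $V\geq 0$. The forward direction is immediate: by part~2) of Lemma~\ref{lem:comb}, \eqref{est:sharp_time} implies $\|S(V)\|_{T,\infty}<\infty$, and then Corollary~\ref{cor:d1_1} yields $V\in\mathcal{K}_1$. For the reverse direction, assume $V\in\mathcal{K}_1$. Part {\it d)} of Corollary~\ref{cor:d1_1} gives $\lim_{T\to 0^+}\|S(V)\|_{T,\infty}=0$, so we can pick $h>0$ small enough that $\|S(V^+)\|_{h,\infty}=\|S(V)\|_{h,\infty}\leq \eta$ for some $\eta\in[0,1)$. Since $V^-=0$, the boundedness hypothesis on $S(V^-,\cdot,\cdot,\cdot)$ in part~3) of Lemma~\ref{lem:comb} is trivial, and that lemma then yields
\[
1 \leq \frac{G(t,x,y)}{g(t,x,y)} \leq \left(\frac{1}{1-\eta}\right)^{1+t/h},\qquad t>0,\ x,y\in\RR,
\]
where the lower bound $G\geq g$ comes directly from the Duhamel formula for non-negative $V$. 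Restricting to $0<t\leq T$ gives the two uniform constants required for \eqref{est:sharp_time}.

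I do not expect a serious obstacle: all the heavy lifting has already been done in Theorem~\ref{thm:d2d1}, Corollary~\ref{cor:d1_1}, and Lemma~\ref{lem:comb}. The only subtlety is realizing that, unlike in dimension two where a non-negative $V\in\widehat{\mathcal{K}}_2\setminus\mathcal{K}_2$ could in principle prevent the smallness of $\|S(V)\|_{h,\infty}$, in dimension one Corollary~\ref{cor:d1_1} collapses $\widehat{\mathcal{K}}_1$ onto $\mathcal{K}_1$, guaranteeing that $\|S(V)\|_{T,\infty}$ vanishes as $T\to 0^+$ and hence that part~3) of Lemma~\ref{lem:comb} becomes applicable with $\eta<1$.
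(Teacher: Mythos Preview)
Your proof is correct and follows exactly the approach indicated in the paper, which simply says ``the equivalence follows from Lemma~\ref{lem:comb} and Corollary~\ref{cor:d1_1}''; you have merely spelled out how those two results combine in each sign case. The only redundancy is invoking Duhamel for the lower bound when $V\geq 0$: part~3) of Lemma~\ref{lem:comb} already gives $e^{-S(V^-,t,x,y)}=e^0=1$ since $V^-=0$.
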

\begin{proof}
The equivalence follows from Lemma~\ref{lem:comb} and Corollary~\ref{cor:d1_1}.
\end{proof}

\begin{proof}[Proof of Theorem~\ref{thm:t1}]
We justify statements in Table~\ref{t:1}. 
We refer to 'local in time' and 'global in time' column as the 'first' and the 'second' column, respectively.
If $d\geq 4$, the lack of the equivalence in both columns is an aftermath of Corollary~\ref{cor:d4} (also since \eqref{est:sharp_uni} implies \eqref{est:sharp_time}).
If $d=3$, the negative answer in the 'first' column results from  Corollary~\ref{cor:d3}.
Before we move forward, we note that
for $V\leq 0$, by the Duhamel formula,
$$
\int_0^t\int_{\Rd} G(s,x,z)|V(z)|g(t-s,z,y)dzds\leq g(t,x,y)\,.
$$
Thus, by integrating in $x$ variable over $\Rd$, we see that \eqref{est:gaus} implies
\begin{align}\label{nes1}
\sup_{t>0,\,y\in\Rd} \int_0^t \int_{\Rd} |V(z)|\,g(t-s,z,y)dzds= \sup_{x\in\Rd} \int_0^{\infty} \int_{\Rd}g(s,x,z) |V(z)|dzds <\infty\,,
\end{align}
while \eqref{est:gaus_b} necessitates
\begin{align}\label{nes2}
\sup_{0<t\leq T,\,y\in\Rd} \int_0^t \int_{\Rd} |V(z)|\,g(t-s,z,y)dzds= \sup_{x\in\Rd} \int_0^T \int_{\Rd}g(s,x,z) |V(z)|dzds <\infty\,.
\end{align}
Therefore, the positive answer in dimension $d=3$ in the 'second' column  follows from \eqref{nes1} and
\cite[Corollary~1.5 and (8)]{MR3914946} (or see \cite[Page 6]{MR3914946}).
The remaining two positive answers in 'global in time' column (dimensions $d=2$, $d=1$) also follow from \eqref{nes1}, this time complemented with
Theorem~\ref{thm:d2d1} and \cite[Lemma~1.1]{MR3914946}.
The two positive answers in 'local in time' column (dimensions $d=2$, $d=1$) follow from 
\eqref{nes2}, Theorem~\ref{thm:d2d1} and the first statement of Lemma~\ref{lem:comb} (see also \cite[Lemma~1.1]{MR3914946}).
\end{proof}

\bibliographystyle{abbrv}

\begin{thebibliography}{10}

\bibitem{MR0167642}
M.~Abramowitz and I.~A. Stegun.
\newblock {\em Handbook of mathematical functions with formulas, graphs, and
  mathematical tables}, volume~55 of {\em National Bureau of Standards Applied
  Mathematics Series}.
\newblock For sale by the Superintendent of Documents, U.S. Government Printing
  Office, Washington, D.C., 1964.

\bibitem{MR644024}
M.~Aizenman and B.~Simon.
\newblock Brownian motion and {H}arnack inequality for {S}chr\"odinger
  operators.
\newblock {\em Comm. Pure Appl. Math.}, 35(2):209--273, 1982.

\bibitem{MR3914946}
K.~Bogdan, J.~Dziuba\'{n}ski, and K.~Szczypkowski.
\newblock Sharp {G}aussian {E}stimates for {H}eat {K}ernels of
  {S}chr\"{o}dinger {O}perators.
\newblock {\em Integral Equations Operator Theory}, 91(1):91:3, 2019.

\bibitem{MR2457489}
K.~Bogdan, W.~Hansen, and T.~Jakubowski.
\newblock Time-dependent {S}chr\"odinger perturbations of transition densities.
\newblock {\em Studia Math.}, 189(3):235--254, 2008.

\bibitem{MR3000465}
K.~Bogdan, T.~Jakubowski, and S.~Sydor.
\newblock Estimates of perturbation series for kernels.
\newblock {\em J. Evol. Equ.}, 12(4):973--984, 2012.

\bibitem{MR3200161}
K.~Bogdan and K.~Szczypkowski.
\newblock Gaussian estimates for {S}chr\"{o}dinger perturbations.
\newblock {\em Studia Math.}, 221(2):151--173, 2014.

\bibitem{MR1329992}
K.~L. Chung and Z.~X. Zhao.
\newblock {\em From {B}rownian motion to {S}chr\"{o}dinger's equation}, volume
  312 of {\em Grundlehren der Mathematischen Wissenschaften [Fundamental
  Principles of Mathematical Sciences]}.
\newblock Springer-Verlag, Berlin, 1995.

\bibitem{MR1772266}
M.~Demuth and J.~A. van Casteren.
\newblock {\em Stochastic spectral theory for selfadjoint {F}eller operators}.
\newblock Probability and its Applications. Birkh\"auser Verlag, Basel, 2000.
\newblock A functional integration approach.

\bibitem{MR4093916}
D.~Ganguly and Y.~Pinchover.
\newblock On the equivalence of heat kernels of second-order parabolic
  operators.
\newblock {\em J. Anal. Math.}, 140(2):549--589, 2020.

\bibitem{MR3307944}
I.~S. Gradshteyn and I.~M. Ryzhik.
\newblock {\em Table of integrals, series, and products}.
\newblock Elsevier/Academic Press, Amsterdam, eighth edition, 2015.
\newblock Translated from the Russian, Translation edited and with a preface by
  Daniel Zwillinger and Victor Moll, Revised from the seventh edition
  [MR2360010].

\bibitem{MR3713578}
T.~Grzywny and K.~Szczypkowski.
\newblock Kato classes for {L}\'{e}vy processes.
\newblock {\em Potential Anal.}, 47(3):245--276, 2017.

\bibitem{MR2253111}
A.~Gulisashvili and J.~A. van Casteren.
\newblock {\em Non-autonomous {K}ato classes and {F}eynman-{K}ac propagators}.
\newblock World Scientific Publishing Co. Pte. Ltd., Hackensack, NJ, 2006.

\bibitem{MR4058740}
T.~Jakubowski and S.~Sydor.
\newblock Schr\"{o}dinger perturbations with concave control function.
\newblock {\em Filomat}, 33(19):6401--6410, 2019.

\bibitem{MR0203473}
T.~Kato.
\newblock {\em Perturbation theory for linear operators}.
\newblock Die Grundlehren der mathematischen Wissenschaften, Band 132.
  Springer-Verlag New York, Inc., New York, 1966.

\bibitem{MR333833}
T.~Kato.
\newblock Schr\"{o}dinger operators with singular potentials.
\newblock {\em Israel J. Math.}, 13:135--148 (1973), 1972.

\bibitem{MR2345907}
K.~Kuwae and M.~Takahashi.
\newblock Kato class measures of symmetric {M}arkov processes under heat kernel
  estimates.
\newblock {\em J. Funct. Anal.}, 250(1):86--113, 2007.

\bibitem{MR1817225}
E.~H. Lieb and M.~Loss.
\newblock {\em Analysis}, volume~14 of {\em Graduate Studies in Mathematics}.
\newblock American Mathematical Society, Providence, RI, second edition, 2001.

\bibitem{MR1783642}
V.~Liskevich and Y.~Semenov.
\newblock Estimates for fundamental solutions of second-order parabolic
  equations.
\newblock {\em J. London Math. Soc. (2)}, 62(2):521--543, 2000.

\bibitem{MR2253015}
V.~Liskevich, H.~Vogt, and J.~Voigt.
\newblock Gaussian bounds for propagators perturbed by potentials.
\newblock {\em J. Funct. Anal.}, 238(1):245--277, 2006.

\bibitem{MR1994762}
P.~D. Milman and Y.~A. Semenov.
\newblock Heat kernel bounds and desingularizing weights.
\newblock {\em J. Funct. Anal.}, 202(1):1--24, 2003.

\bibitem{MR1687500}
R.~Schnaubelt and J.~Voigt.
\newblock The non-autonomous {K}ato class.
\newblock {\em Arch. Math. (Basel)}, 72(6):454--460, 1999.

\bibitem{MR670130}
B.~Simon.
\newblock Schr\"odinger semigroups.
\newblock {\em Bull. Amer. Math. Soc. (N.S.)}, 7(3):447--526, 1982.

\bibitem{MR845197}
J.~Voigt.
\newblock Absorption semigroups, their generators, and {S}chr\"{o}dinger
  semigroups.
\newblock {\em J. Funct. Anal.}, 67(2):167--205, 1986.

\bibitem{MR1488344}
Q.~S. Zhang.
\newblock On a parabolic equation with a singular lower order term. {II}. {T}he
  {G}aussian bounds.
\newblock {\em Indiana Univ. Math. J.}, 46(3):989--1020, 1997.

\bibitem{MR1978999}
Q.~S. Zhang.
\newblock A sharp comparison result concerning {S}chr\"odinger heat kernels.
\newblock {\em Bull. London Math. Soc.}, 35(4):461--472, 2003.

\bibitem{MR1772429}
Q.~S. Zhang and Z.~Zhao.
\newblock Estimates of global bounds for some {S}chr\"odinger heat kernels on
  manifolds.
\newblock {\em Illinois J. Math.}, 44(3):556--573, 2000.

\bibitem{MR1132313}
Z.~Zhao.
\newblock A probabilistic principle and generalized {S}chr\"{o}dinger
  perturbation.
\newblock {\em J. Funct. Anal.}, 101(1):162--176, 1991.

\end{thebibliography}

\end{document}